\documentclass[11pt,reqno]{amsart}

\usepackage[a4paper,margin=30mm]{geometry}
\usepackage{amsmath,amssymb,amsthm,mathtools}
\numberwithin{equation}{section}
\usepackage{booktabs,longtable,array,graphicx,tabularx}
\providecommand{\LTcaptype}{table}
\usepackage{float,needspace}
\graphicspath{{figures/}}
\usepackage{enumitem}
\usepackage{xcolor}
\usepackage{hyperref}
\usepackage[nameinlink,capitalise,noabbrev]{cleveref}
\usepackage{microtype}
\usepackage{url}

\hypersetup{
  colorlinks=true,
  linkcolor=blue!55!black,
  citecolor=blue!55!black,
  urlcolor=blue!55!black,
  pdftitle={Local combinatorial criteria for geometric hyper-ideal triangulations via combinatorial Ricci flow},
  pdfauthor={Soichiro Uemura}
}

\allowdisplaybreaks
\makeatletter
\g@addto@macro\@adjustvertspacing{%
  \abovedisplayskip=4pt plus 2pt minus 1pt
  \belowdisplayskip=4pt plus 2pt minus 1pt
  \abovedisplayshortskip=0pt plus 1pt
  \belowdisplayshortskip=2pt plus 1pt minus 1pt
}
\makeatother
\setlist{nosep}
\usepackage{aliascnt}
\providecommand{\RevisionHeadingColor}{}
\newtheoremstyle{plainrefs}{.5\baselineskip plus .2\baselineskip}{.5\baselineskip plus .2\baselineskip}
  {\itshape}{}{\bfseries}{.}{.5em}
  {{\RevisionHeadingColor\thmname{#1}\thmnumber{ #2}\thmnote{ \normalfont#3}}}
\newtheoremstyle{definitionrefs}{.5\baselineskip plus .2\baselineskip}{.5\baselineskip plus .2\baselineskip}
  {\normalfont}{}{\bfseries}{.}{.5em}
  {{\RevisionHeadingColor\thmname{#1}\thmnumber{ #2}\thmnote{ \normalfont#3}}}
\newtheoremstyle{remarkrefs}{.5\baselineskip plus .2\baselineskip}{.5\baselineskip plus .2\baselineskip}
  {\normalfont}{}{\itshape}{.}{.5em}
  {{\RevisionHeadingColor\thmname{#1}\thmnumber{ #2}\thmnote{ \normalfont#3}}}
\theoremstyle{plainrefs}
\newtheorem{theorem}{Theorem}[section]
\theoremstyle{plainrefs}
\newaliascnt{proposition}{theorem}
\newtheorem{proposition}[proposition]{Proposition}
\aliascntresetthe{proposition}
\crefname{proposition}{Proposition}{Propositions}
\Crefname{proposition}{Proposition}{Propositions}
\theoremstyle{plainrefs}
\newaliascnt{lemma}{theorem}
\newtheorem{lemma}[lemma]{Lemma}
\aliascntresetthe{lemma}
\crefname{lemma}{Lemma}{Lemmas}
\Crefname{lemma}{Lemma}{Lemmas}
\theoremstyle{plainrefs}
\newaliascnt{corollary}{theorem}
\newtheorem{corollary}[corollary]{Corollary}
\aliascntresetthe{corollary}
\crefname{corollary}{Corollary}{Corollaries}
\Crefname{corollary}{Corollary}{Corollaries}
\theoremstyle{definitionrefs}
\newaliascnt{definition}{theorem}
\newtheorem{definition}[definition]{Definition}
\aliascntresetthe{definition}
\crefname{definition}{Definition}{Definitions}
\Crefname{definition}{Definition}{Definitions}
\theoremstyle{definitionrefs}
\newaliascnt{condition}{theorem}
\newtheorem{condition}[condition]{Condition}
\aliascntresetthe{condition}
\crefname{condition}{Condition}{Conditions}
\Crefname{condition}{Condition}{Conditions}
\theoremstyle{definitionrefs}
\newaliascnt{example}{theorem}
\newtheorem{example}[example]{Example}
\aliascntresetthe{example}
\crefname{example}{Example}{Examples}
\Crefname{example}{Example}{Examples}
\theoremstyle{remarkrefs}
\newaliascnt{remark}{theorem}
\newtheorem{remark}[remark]{Remark}
\aliascntresetthe{remark}
\newtheorem*{unnumberedremark}{Remark}
\crefname{remark}{Remark}{Remarks}
\Crefname{remark}{Remark}{Remarks}

\theoremstyle{plainrefs}
\newtheorem*{introbichromatic}{Theorem~\ref{thm:local-bichromatic}}
\newtheorem*{introparametric}{Theorem~\ref{thm:parametric-box}}

\newcommand{\R}{\mathbb{R}}

\newcommand{\PE}{P_E}
\newcommand{\Ktil}{\widetilde K}
\newcommand{\clamp}{\operatorname{clamp}}
\newcommand{\St}{\operatorname{St}}
\newcommand{\arccosh}{\operatorname{arccosh}}
\newcommand{\Vol}{\operatorname{Vol}}
\newcommand{\cov}{\operatorname{cov}}

\newcommand{\dd}{\,\mathrm d}

\newcommand{\un}{\mathrm{un}}
\newcommand{\Q}{\mathbb Q}
\newcommand{\Z}{\mathbb Z}
\newcommand{\code}[1]{\texttt{#1}}
\newcommand{\Pmix}{\mathcal P_{\mathrm{mix}}}
\title[Local criteria for geometric hyper-ideal triangulations]
{Local combinatorial criteria for geometric\\
hyper-ideal triangulations\\
via combinatorial Ricci flow}
\author{Soichiro Uemura}
\address{Kavli Institute for the Physics and Mathematics of the Universe (WPI), The University of Tokyo Institutes for Advanced Study, The University of Tokyo, Kashiwa, Chiba 277-8583, Japan}
\address{Graduate School of Mathematical Sciences, The University of Tokyo, Komaba, Meguro-ku, Tokyo 153-8914, Japan}
\address{RIKEN Center for Interdisciplinary Theoretical and Mathematical Sciences (iTHEMS), Hirosawa, Wako, Saitama 351-0198, Japan}
\email{soichiro.uemura@ipmu.jp or soichiro.uemura@gmail.com}
\keywords{combinatorial Ricci flow, hyper-ideal tetrahedron, geometric triangulation, hyperbolic 3-manifold, invariant box, local bichromaticity}
\date{}

\begin{document}

\begin{abstract}
We give local combinatorial criteria for realizing prescribed ideal triangulations by nondegenerate hyperbolic truncated tetrahedra. A local bichromatic criterion allows valence-7 edges when tetrahedra meeting low-valence edges use at most two quotient-edge classes. Using the extended combinatorial Ricci flow, we obtain parameter-dependent criteria for triangulations of minimum valence 6. Symmetric and pair-min angle estimates yield explicit valence conditions, including mixed conditions on entire edge stars. The proofs use analytic angle inequalities and rigorous interval evaluations. Explicit face pairings realize the criteria and distinguish their scope, while cyclic constructions and finite covers give infinite families. Each criterion yields a unique zero-curvature hyper-ideal metric and exponential convergence from any positive initial length vector.
\end{abstract}

\maketitle

\section{Introduction}

A central problem in the hyperbolic geometry of $3$-manifolds is to realize topological triangulations by nondegenerate hyperbolic tetrahedra. For complete finite-volume cusped hyperbolic $3$-manifolds, Thurston's geometric triangulation conjecture predicts a decomposition into positive-volume ideal hyperbolic tetrahedra; see \cite{ThurstonNotes,HodgsonRubinstein,HamPurcell}. For compact hyperbolic $3$-manifolds with totally geodesic boundary, the corresponding pieces are hyper-ideal, or truncated, tetrahedra. Kojima's theorem gives a canonical decomposition into hyperbolic truncated polyhedra \cite{Kojima}. This is an existence theorem for a polyhedral decomposition; it does not assert that an arbitrary prescribed ideal triangulation can be realized geometrically.

For an ideal triangulation of a compact $3$-manifold $N$ with nonempty boundary, a \emph{geometric realization} consists of genuine (nondegenerate) hyper-ideal tetrahedra glued by the prescribed face isometries, with the induced metric on $N$ smooth and hyperbolic with totally geodesic boundary. Once the face shapes match, smoothness is equivalent to total dihedral angle $2\pi$ around every quotient edge. The metric is complete on the compact space $N$, including its boundary; no cusp-completeness condition is needed. This is not a claim of completeness for the interior alone or for the space obtained by collapsing the boundary. See \Cref{def:geometric,prop:geometric-realization}.

Let the boundary components of $N$ have genus at least 2. Costantino--Frigerio--Martelli--Petronio proved that, if every edge of an ideal triangulation has valence at least 6, then $N$ admits a hyperbolic metric with totally geodesic boundary \cite{CFMP}. This does not establish geometricity of the prescribed triangulation. Feng--Ge--Hua proved geometricity under the uniform minimum-valence-10 hypothesis using the extended combinatorial Ricci flow \cite{FengGeHua}; Zhao lowered this threshold to 9 using the same flow \cite{Zhao}.

Our local criteria allow edges of valence 6, 7, or 8 under restrictions on quotient-edge identifications or neighbouring valences.

Write $E$ for the quotient-edge set and $v(e)$ for the valence of a quotient edge $e$, counting local occurrences. Our first main result uses local identifications rather than bounds on individual neighbouring valences.

\begin{introbichromatic}
Let $(M,\mathcal T)$ be a closed pseudo $3$-manifold with $v(e)\ge7$ for every $e\in E$. If every tetrahedron containing an occurrence of an edge of valence 7, 8, or 9 uses at most 2 quotient-edge classes, then $(M,\mathcal T)$ admits a unique zero-curvature hyper-ideal metric. The extended combinatorial Ricci flow converges exponentially to it from any positive initial length vector.
\end{introbichromatic}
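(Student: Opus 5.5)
We will follow the invariant-box strategy of Feng--Ge--Hua and Zhao for the extended combinatorial Ricci flow $\frac{dl_e}{dt}=K_e$, where $K_e=2\pi-\sum\alpha$ is the curvature of quotient edge $e$ and the sum is over the dihedral angles at all $v(e)$ occurrences of $e$. For each tetrahedron we use the extended dihedral-angle map of Luo--Yang (Bobenko--Pinkall--Springborn type). This map is defined for every positive length vector, and with it the flow exists for all time. The extended volume functional is convex, so its critical point is unique whenever it exists.

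Under this framework the theorem reduces to exhibiting a compact box $B=\prod_e[a_e,b_e]\subset\R_{>0}^E$ with two properties. First, $B$ is forward invariant, with $K_e<0$ on the face $l_e=b_e$ and $K_e>0$ on the face $l_e=a_e$. Second, $B$ lies in the region where every tetrahedron is nondegenerate, so that no angle is clamped and the Jacobian $\partial K/\partial l$ is negative definite there. Together these give a zero of $K$ inside $B$. By convexity this zero is unique, and by negative definiteness on a compact set the convergence is exponential. Starting from an arbitrary initial vector, we show the flow enters $B$ in finite time. For this we use the monotonicity used by Zhao: $\max l$ decreases when it is large and $\min l$ increases when it is small. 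The global uniqueness and convergence conclusions are then the general statements assumed earlier in the paper.

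We choose the box by valence class. The idea is that an edge of valence $v$ wants its dihedral angles near $2\pi/v$. The key quantity is the angle at an edge of length $l$ in a tetrahedron whose six edge lengths lie in a given box. Since a tetrahedron meeting a low-valence edge uses at most two quotient-edge classes, its six lengths take only two values $(x,y)$. The tetrahedron's type is determined by which of the edges are labelled $x$; there are finitely many combinatorial patterns up to symmetry (for example, opposite pairs, or a triangle versus its complement). For each pattern we get an explicit two-variable angle function $\alpha(x,y)$. We do not need worst-case bounds over a full six-dimensional box; we need only the monotonicity of $\alpha$ in the two variables. Luo's formula shows that the angle at an edge decreases in its own length and increases in the lengths of the other edges, up to the usual sign caveats. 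For edges of valence $\ge 10$, which meet only tetrahedra with no restriction, we use the uniform Feng--Ge--Hua/Zhao estimate, valid for valence $\ge 9$. That estimate places all such lengths in a common interval $[a,b]$.

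The boundary inequalities then become the following conditions. On the upper face $l_e=b_v$ we need $v\cdot\max_{\text{patterns},\,y\in[a_*,b_*]}\alpha(b_v,y)<2\pi$. On the lower face $l_e=a_v$ we need $v\cdot\min_{\text{patterns},\,y}\alpha(a_v,y)>2\pi$. Here $y$ ranges over the union of the boxes of the other valence classes. Each condition is a one-dimensional check after monotonicity in $y$, so it reduces to evaluating at the endpoints. We will choose explicit endpoints $a_v,b_v$ for $v=7,8,9$ and $v\ge 10$ and verify the finitely many inequalities by rigorous interval arithmetic, together with nondegeneracy throughout the box.

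The main obstacle is the valence-7 case. The angles must average $2\pi/7$, which forces relatively short lengths. We then have to ensure that a two-class tetrahedron pairing a short valence-7 edge with a long high-valence edge is still nondegenerate, and that the cross-pattern extremes do not collide. If a single box for all valence-7 edges fails, our first fallback is to refine the boxes by pattern type, or by the valence of the partner class, giving a finite system of coupled intervals. We would solve this system by iterating the endpoint map until it is self-consistent, and then certify the result with interval arithmetic.
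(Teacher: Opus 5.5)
There is a genuine gap: you have the monotonicity backwards, and the error is substantive rather than a typo. In a hyper-ideal tetrahedron the dihedral angle at an edge \emph{increases} with its own length. Indeed $\varphi(1,x_2,\dots,x_6)=1$, so the angle tends to $0$ as the edge shrinks, and the regular angle $\arccos\bigl(a/(2a-1)\bigr)$ increases with $a=\cosh l$. On the cubes used in the paper the angle also \emph{decreases} in the four adjacent lengths. Your own remark that $\partial K/\partial l$ is negative definite already forces $\partial K_e/\partial l_e<0$, i.e.\ the angle sum at $e$ grows with $l_e$.

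Consequently the inequalities you plan to certify are reversed: $v\max\alpha(b_v,y)<2\pi$ on $l_e=b_v$ and $v\min\alpha(a_v,y)>2\pi$ on $l_e=a_v$. They give $K_e>0$ on the upper face, contradicting the invariance requirement $K_e<0$ you stated. Worse, they cannot both hold when $a_v<b_v$, because the sum of the target angles in a tetrahedron is strictly increasing in the common target length (positive definiteness of the co-volume Hessian). For the same reason ``valence $7$ forces short lengths'' is backwards: $2\pi/7$ already needs $\cosh l\approx2.52$ in the regular case. Zhao's box $[1,2]$ is therefore too small, since $7\arccos(2/3)<2\pi$.

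Even with the signs corrected, a per-occurrence test $v\min\alpha>2\pi$ on the upper face fails. Take the pattern $K_4-e$ with target cosh-length $a=3$ and complementary cosh-length $b\downarrow1$. The cosine of the target angle opposite the $b$-edge tends to $64/80=4/5$, so that angle is close to $\arccos(4/5)\approx0.64<2\pi/7$. The other four target angles compensate, with cosines tending to $1/\sqrt5$. Individual target angles need not be monotone in $b$ for $P_4$, the paw, or $K_4-e$, so ``evaluate at the endpoints'' is also unjustified.

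The missing ingredient is \Cref{lem:bichromatic-tetrahedron}: for $1<b\le a\le3$, the \emph{sum} of the target angles in the tetrahedron is at least $|E(G)|\arccos\bigl(a/(2a-1)\bigr)$, with the regular tetrahedron as minimizer. To guarantee $b\le a$, the paper uses one box $[L,3]^E$ with a common ceiling rather than valence-dependent boxes. On the face $x_e=3$, valences $7,8,9$ then get an angle sum of at least $7\arccos(3/5)>2\pi$. Valences $\ge10$ get Feng--Ge--Hua's longest-edge bound $\alpha>\pi/5$. The lower face is handled by taking $L$ near $1$, where angles tend to $0$ uniformly. Nondegeneracy of the resulting zero follows from the small-length criterion $\cosh l\le3$.

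With valence-dependent ceilings you lose both the comparison $b\le a$ and the longest-edge hypothesis on upper faces, so you would need new estimates there. Finally, you need not (and did not) show that arbitrary flows enter the box. Once a genuine zero exists, uniqueness is Luo--Yang rigidity, and convergence from any positive initial vector is Feng--Ge--Hua's theorem.
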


Our second main result applies to triangulations of minimum valence 6. For a common parameter $C\in(3/2,1+\sqrt2]$ and integers $n\ge6$, put $c_n=\cos(2\pi/n)$ and
\begin{equation*}
 \Gamma_n(C)=\frac{C+2-c_n}{C+c_n},\qquad
 B_n(C)=
 \begin{cases}
 C,&6\le n\le9,\\[1mm]
 1+\dfrac{2C^2(1-c_n)}{1+c_n},&n\ge10.
 \end{cases}
\end{equation*}
These define a box $Q_C=\prod_{e\in E}[\Gamma_{v(e)}(C),B_{v(e)}(C)]$ in the variables $x_e=\cosh l_e$.

\begin{introparametric}
Let $(M,\mathcal T)$ be a closed pseudo $3$-manifold with $v(e)\ge6$ for every $e\in E$. Fix $C\in(3/2,1+\sqrt2]$. Suppose that, for every edge $e$ of valence 6, 7, 8, or 9 and every $x\in Q_C$ with $x_e=C$, the total incident extended angle at $e$ is larger than $2\pi$. Then every solution starting in the interior of $Q_C$ remains in $Q_C$. There is a unique zero-curvature hyper-ideal metric, and the extended combinatorial Ricci flow converges exponentially to it from any positive initial length vector.
\end{introparametric}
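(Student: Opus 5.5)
We prove this by showing that the box $Q_C$ is forward-invariant for the extended combinatorial Ricci flow, written in the variables $x_e=\cosh l_e$. We then deduce existence of a zero-curvature metric from the convexity framework of Feng--Ge--Hua, and finally get uniqueness and exponential convergence from the strictly convex extended Ricci potential.

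Recall that the flow is $\dot l_e = K_e = 2\pi-\sum \widetilde\theta$, the sum running over the extended dihedral angles incident to $e$. In terms of $x_e$ this reads $\dot x_e=\sinh(l_e)K_e$, which has the same sign as $K_e$. Forward invariance of the closed box therefore reduces to two sign conditions on faces of $Q_C$.
\begin{itemize}
\item On the face $x_e=\Gamma_{v(e)}(C)$ we need $K_e>0$.
\item On the face $x_e=B_{v(e)}(C)$ we need $K_e<0$.
\end{itemize}
Here every other coordinate $x_f$ ranges over its interval in $Q_C$. Given strict signs, a standard barrier argument shows that a solution starting in the interior cannot reach $\partial Q_C$; for instance, take the first exit time and the coordinate that exits. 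The extended angles are continuous, so the ODE has global solutions.

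\emph{Lower faces.} At $x_e=\Gamma_n(C)$ the edge is short. We use the monotonicity properties of the dihedral angle of a hyper-ideal tetrahedron in its edge lengths: the angle at $e$ decreases as $l_e$ decreases and increases as the other lengths decrease. This bounds each incident angle above by its value at the extreme configuration in which the neighbouring lengths are as small as the box allows. We expect $\Gamma_n(C)$ to be calibrated so that, when every neighbour length equals the common lower value corresponding to $C$, the symmetric formula $\cos\theta = (\cosh\text{-relation})$ gives exactly $\theta\le 2\pi/n$. The formula $\frac{C+2-c_n}{C+c_n}$ is the solution of $\cos\theta=c_n$ in the regular-type cosine law with opposite/adjacent lengths $C$. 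Summing over the $n$ incident tetrahedra gives total angle $<2\pi$, so $K_e>0$.

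\emph{Upper faces.} For $n\ge 10$, the bound $B_n(C)=1+\frac{2C^2(1-c_n)}{1+c_n}$ plays the same role in reverse. When $x_e$ is this large, the pair-min estimate (the extended angle at $e$ is at least its value when all other lengths are at $C$ or larger, i.e.\ at $\Gamma$-level) gives each angle $>2\pi/n$, so $K_e<0$. For $6\le n\le 9$ we have $B_n=C$, and the needed inequality $K_e<0$ at $x_e=C$ is exactly the hypothesis of the theorem. This is why the hypothesis is imposed only at those valences.

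Once invariance holds, a trajectory from an interior point stays in the compact set $Q_C\subset(1,\infty)^E$. We use that the lower bound $\Gamma_n(C)>1$ since $C>c_n$. The Ricci potential (the extended Ricci energy, a $C^1$ convex function, strictly convex where tetrahedra are nondegenerate) is a Lyapunov function, and $\dot{\mathcal E}=-\sum K_e^2$. By LaSalle, the $\omega$-limit lies in $\{K=0\}\cap Q_C$, so a zero-curvature extended metric exists. We must still check that this limit is a genuine hyper-ideal metric, not one with degenerate (extended) tetrahedra. For this we show that on $Q_C$ every face triple satisfies the hyper-ideal admissibility inequality, which follows from $C\le 1+\sqrt2$ and the bounds above. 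Uniqueness follows from strict convexity of the potential on the nondegenerate region together with convexity of the extension. Exponential convergence from any positive initial vector follows because the Hessian at the critical point is positive definite. The extended flow from an arbitrary start converges to the unique critical point of the proper convex extended energy (Feng--Ge--Hua/Zhao), and linearization gives the exponential rate.

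The main obstacle is the upper-face estimate for $n\ge10$ and, more subtly, the lower-face estimate. These need uniform analytic angle inequalities for non-symmetric configurations with the neighbours ranging over a whole box, including regions where the extended angle is given by the extension rather than the true formula. We would first reduce to extreme vertices via the monotonicity/Schläfli-type sign of $\partial\theta_{ij}/\partial l_{kl}$. Any residual two-parameter inequalities in $C$ would then be checked by rigorous interval evaluation.
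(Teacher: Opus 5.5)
\paragraph{Verdict.} Your overall architecture matches the paper's: invariance of $Q_C$, a zero of $\Ktil$ inside $Q_C$, genuineness, then rigidity and convergence. Checking inward-pointing signs directly on the faces of $Q_C$, i.e.\ applying \Cref{thm:abstract-box}, is a legitimate alternative to the paper's route (first contact at level $C$ combined with \Cref{lem:C-conditional}). There is, however, a genuine gap in the analytic core you defer as the ``main obstacle'', and the method you propose for it fails.

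\paragraph{The missing endpoint bound.} What is needed is the two-sided bound of \Cref{lem:uniform-endpoint}: for $a,x_2,\dots,x_6\in[1,C]$,
$\varphi(a,1,1,C,1,1)\le\varphi(a,x_2,\dots,x_6)\le\varphi(a,C,C,1,C,C)$.
Its equality cases at level $c_n$ define $\Gamma_n(C)$ and, for $n\ge10$, $B_n(C)$.
\begin{itemize}
\item \emph{Wrong sign for the opposite edge.} The numerator of \eqref{eq:phi} contains $x_4(1-x_1^2)$ and the denominator is free of $x_4$. So the angle at $e_1$ \emph{increases} with the opposite length, contrary to your monotonicity claim. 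The lower-face comparison point is therefore $(a,1,1,C,1,1)$. It is neither ``all neighbours as small as possible'' nor ``opposite/adjacent lengths $C$''.
\item \emph{No fixed adjacent sign once $C>2$.} On the upper faces with $n\ge10$, $\partial\varphi/\partial x_2$ has the sign of $(x_1x_4+x_2x_5-x_3x_6)x_6+x_3+x_1x_5+x_2x_4$. At $(x_1,1,C,1,1,C)$ this equals $x_1(C+1)+2C+1-C^3$, which is negative for $1<x_1<C^2-C-1$. That range is nonempty for $C>2$ and contains $B_n(C)$ for all large $n$. So pushing coordinates to vertices by sign-of-derivative arguments fails exactly where you need it.
\end{itemize}
The paper proves the lower bound by first moving $x_4$ to $C$ and proving adjacent monotonicity only on that slice. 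This is where $C\le1+\sqrt2$ enters, through $-(C+1)(C^2-2C-1)\ge0$. It proves the upper bound from Feng--Ge--Hua's three-endpoint theorem, together with explicit factorizations showing that the other candidates $V,W$ do not exceed $U=\varphi(a,C,C,1,C,C)$.

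\paragraph{Other missing or misattributed steps.}
\begin{itemize}
\item \emph{Nonempty box.} You never check that $\operatorname{int}Q_C\neq\emptyset$ and that $B_n(C)<C$ for $n\ge10$ (\Cref{lem:GammaB}). This is where $C>3/2$ is needed: $C-\Gamma_n(C)=(C+1)(C-2+c_n)/(C+c_n)$ with $c_6=1/2$. Without it the invariance statement is vacuous and produces no zero.
\item \emph{Genuineness.} This is not a face-by-face admissibility condition, since right-angled hexagonal faces exist for any positive lengths. It follows from Feng--Ge--Hua's small-length criterion because $Q_C\subset[1,3]^E$, which uses only $C<3$.
\item \emph{Strictness.} The calibration gives only $\alpha\le2\pi/n$ on a lower face and $\alpha\ge2\pi/n$ on a high-valence upper face, with equality at the comparison vertex. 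Your barrier argument needs strict signs. You can recover them from $x_f\ge\Gamma_{v(f)}(C)>1$: $\varphi$ decreases strictly in $x_4$ when $x_1>1$, and adjacent monotonicity is strict on the slice $x_4=C$ when $a>1$. Alternatively, avoid the issue as the paper does, by testing levels strictly beyond the barriers under the a priori bound $x\le C$.
\item \emph{Flow equation.} The flow is $\dot l_e=\Ktil_e l_e$, so $\dot x_e=\sinh(l_e)\,l_e\Ktil_e$. The signs are unaffected.
\end{itemize}
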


The general criterion in \Cref{cor:combined-star} yields the explicit valence conditions of \Cref{cor:C0-explicit}. Face-pairing examples distinguish their scope, and finite covers give infinite families.

\Cref{sec:preliminaries} recalls the geometric setting and flow theory, and \Cref{sec:compactness} proves the local bichromatic criterion. \Cref{sec:C2} derives explicit valence conditions from Zhao's $C=2$ conditional estimates \cite{Zhao}; \Cref{sec:parametric} extends the endpoint bounds to $3/2<C\le1+\sqrt2$ and develops symmetric and pair-min criteria. Each criterion is accompanied by face-pairing examples. \Cref{sec:manifold} gives the consequences for compact manifolds with boundary and finite covers. The appendices contain the rigorous numerical evaluations, face-pairing data, and comparisons for the mixed pair-min example.

\subsection*{Acknowledgements}
The author is grateful to Professor Masahito Yamazaki for helpful discussions. The author warmly thanks Xinrong Zhao for sharing his revised manuscript before its public release and for explaining its arguments. This work was supported by the RIKEN Junior Research Associate Program.

\noindent\textit{AI assistance.} The author used ChatGPT (OpenAI) as an aid for tasks including the search for explicit triangulations in \Cref{prop:bichromatic-family,prop:cyclic-bipyramid} and Examples~\ref{ex:U6}--\ref{ex:mixed-pair-eight}, figure preparation, code development for the rigorous numerical evaluations in Sections~\ref{sec:C2} and~\ref{sec:parametric}, reference checking, translation, and language editing. The author has verified the correctness of the AI-assisted examples, code, and figures and takes full responsibility for the content of this paper.

\section{Hyper-ideal metrics and the extended combinatorial Ricci flow}\label{sec:preliminaries}

\subsection{Closed pseudo \texorpdfstring{$3$}{3}-manifolds and ideal triangulations}

Let $\widehat{\mathcal T}=\bigsqcup_{i=1}^N\sigma_i$ be a finite disjoint union of combinatorial tetrahedra, realized as closed affine $3$-simplices with their Euclidean subspace topology. Their faces carry the induced topology. Pair their codimension-one faces by the affine extensions of vertex bijections, and denote the quotient complex by $\mathcal T$ and its underlying space, endowed with the quotient topology, by $M$.  The pair $(M,\mathcal T)$ is a closed pseudo $3$-manifold.  Let $E=E(\mathcal T)$ be its set of quotient edges and let
\[
 \PE:E(\widehat{\mathcal T})\longrightarrow E
\]
be the quotient map.  The valence of $e\in E$ is
\[
 v(e)=\#\PE^{-1}(e).
\]
We regard $\PE^{-1}(e)$ as the set of local occurrences of $e$.

For later use, define the edge star as the occurrence set
\[
 \St(e)=\{(\widehat\sigma,\widehat e)\mid\widehat e\subset\widehat\sigma,
 \ \PE(\widehat e)=e\}.
\]
The set of quotient edges appearing in this star is
\[
 \mathcal E(\St(e))
 =\{\PE(\widehat f)\mid\widehat f\subset\widehat\sigma,
 (\widehat\sigma,\widehat e)\in\St(e)\}.
\]
The occurrence formulation is necessary because, in a pseudo-manifold, the same quotient tetrahedron may occur more than once around an edge.

Now let $N$ be a compact $3$-manifold with boundary components $S_1,\dots,S_k$.  Let $C(N)$ be the quotient obtained by collapsing each $S_i$ to a point $v_i$.  An ideal triangulation of $N$ is a triangulation $\mathcal T_N$ of $C(N)$ whose vertex set is precisely $\{v_1,\dots,v_k\}$.  Then $C(N)\setminus\{v_1,\dots,v_k\}\cong N\setminus\partial N$, and $(C(N),\mathcal T_N)$ is a closed pseudo $3$-manifold in the above sense.

\subsection{Hyper-ideal tetrahedra}\label{subsec:hyperideal-terminology}

A hyper-ideal tetrahedron is a compact convex polyhedron in $\mathbb H^3$ combinatorially equivalent to a truncated tetrahedron. Its faces lie in totally geodesic planes, and its four triangular truncation faces meet the adjacent hexagonal faces orthogonally. Thus its edges are geodesic segments and its four hexagonal faces are right-angled; see \cite[Section~4.1]{LuoYang}.  Label the vertices of its underlying combinatorial tetrahedron by $0,1,2,3$. The internal edge joining the truncation faces at $i$ and $j$ has length $l_{ij}=l_{ji}>0$ and dihedral angle $\alpha_{ij}=\alpha_{ji}\in(0,\pi)$ ($i\ne j$).  Bao--Bonahon's angle characterization states that a vector $(\alpha_{ij})\in\R_{>0}^6$ is the dihedral-angle vector of a hyper-ideal tetrahedron if and only if
\[
 \sum_{j\ne i}\alpha_{ij}<\pi\qquad (i=0,1,2,3).
\]
The isometry class is determined either by the 6 dihedral angles or by the 6 edge lengths; see \cite{BaoBonahon,LuoYang}.  Let
\[
 L\subset\R_{>0}^6
\]
be the open set of positive length vectors realized by nondegenerate hyper-ideal tetrahedra. A \emph{genuine hyper-ideal tetrahedron} is an actual nondegenerate hyperbolic polyhedron of the kind described above, and a \emph{genuine length vector} is an element of $L$. A \emph{generalized hyper-ideal tetrahedron} is specified by an arbitrary $l\in\R_{>0}^6$, which need not belong to $L$. Its extended angles are defined in \Cref{subsec:generalized-angles}; it is genuine precisely when $l\in L$, or equivalently when \eqref{eq:length-criterion} holds.

A global edge length vector $l=(l_e)_{e\in E}\in\R_{>0}^E$ induces the local assignment $\widehat l=l\circ\PE$. We define
\[
 L(M,\mathcal T)=\{l\in\R_{>0}^E\mid\widehat l|_{E(\widehat\sigma)}\in L
 \text{ for every }\widehat\sigma\in\widehat{\mathcal T}\}.
\]
A \emph{genuine hyper-ideal metric} is a global length vector $l\in L(M,\mathcal T)$, so that every local tetrahedron is genuine. When $l\in L(M,\mathcal T)$, corresponding right-angled hexagons have equal alternating edge lengths and therefore are glued by isometries realizing the prescribed vertex correspondence.

For $l\in L(M,\mathcal T)$ and a local edge $\widehat e$ in a tetrahedron $\widehat\sigma$, let $\alpha(\widehat e;l)$ denote its dihedral angle for the induced length vector $\widehat l|_{E(\widehat\sigma)}$. The \emph{combinatorial Ricci curvature} is the cone-angle defect
\[
 K_e(l)=2\pi-\sum_{\widehat e\in\PE^{-1}(e)}\alpha(\widehat e;l),
 \qquad e\in E.
\]
Thus $K:L(M,\mathcal T)\to\R^E$ records the curvature at every quotient edge.

\begin{definition}\label{def:geometric}
A triangulation $(M,\mathcal T)$ is \emph{geometric} if some $l\in L(M,\mathcal T)$ has total dihedral angle $2\pi$ around every quotient edge, counting local occurrences with multiplicity. Equivalently, $K(l)=0$; such an $l$ is a \emph{zero-curvature hyper-ideal metric}. For an ideal triangulation of a compact manifold, this is equivalent to a geometric decomposition of a complete hyperbolic metric, as follows.
\end{definition}

\begin{proposition}\label{prop:geometric-realization}
Let $\mathcal T_N$ be an ideal triangulation of a compact $3$-manifold $N$ with nonempty boundary, and let $l\in L(C(N),\mathcal T_N)$. Glue the corresponding truncated tetrahedra by the prescribed isometries of their hexagonal faces. The resulting metric on $N$ is smooth and hyperbolic with totally geodesic boundary if and only if every quotient-edge angle sum is $2\pi$. When this holds, the metric is complete on $N$, including its boundary.
\end{proposition}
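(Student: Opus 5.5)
The plan is to check smoothness locally, one stratum at a time: interiors of tetrahedra, interiors of hexagonal faces, interiors of quotient edges, and the boundary (the truncation triangles). Completeness then follows from compactness. Let $X$ be the space obtained by gluing the truncated tetrahedra along their hexagonal faces. As noted before \Cref{def:geometric}, $l\in L(C(N),\mathcal T_N)$ forces paired right-angled hexagons to have equal alternating edge lengths. A right-angled hexagon is determined up to isometry by three alternating lengths, so the prescribed vertex bijections are realized by genuine isometries. Removing the truncation faces and collapsing them recovers the ideal triangulation, so $X$ is homeomorphic to $N$. The truncation triangles glue together to form $\partial N$.

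\textbf{Interior of tetrahedra and faces.} At an interior point of a tetrahedron the chart is the tetrahedron itself. At an interior point of a hexagonal face, the two adjacent truncated tetrahedra lie on opposite sides of the common totally geodesic plane. Their union therefore gives a hyperbolic ball chart, with transition maps given by isometries. This part is standard and needs no angle condition.

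\textbf{Edges.} Take an interior point $p$ of a quotient edge $e$. The local occurrences $\PE^{-1}(e)$ are arranged cyclically around $e$, each contributing a wedge of dihedral angle $\alpha(\widehat e;l)$. This cyclic arrangement holds because $\mathcal T_N$ triangulates a manifold away from the vertices, so the link of an edge is a circle. Developing these wedges around the geodesic through $p$ gives a hyperbolic cone metric with cone angle $\sum\alpha=2\pi-K_e(l)$. A cone metric is locally isometric to $\mathbb H^3$ near $p$ if and only if the cone angle is $2\pi$. So smoothness along $e$ is equivalent to $K_e(l)=0$, which gives both implications for edge interiors. The cyclic wedge ordering might repeat a tetrahedron; that is harmless, because the wedges are indexed by occurrences.

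\textbf{Boundary.} Each edge ends at two truncation triangles, and the hexagonal faces meet those triangles orthogonally. A point in the interior of a truncation triangle has a half-ball chart. A point on a boundary edge (where a hexagon meets a triangle) has a chart formed by two quarter-balls glued along a half-plane. At a boundary vertex, where an internal edge meets $\partial N$, the wedges around the edge, cut by the orthogonal truncation plane, develop to a half-ball exactly when the angle sum is $2\pi$. This is the same condition as before, since all the truncation planes are orthogonal to $e$ and the angles along $e$ are constant. The boundary is thus locally a totally geodesic plane: it is the union of geodesic triangles meeting at angle sum $2\pi$, by the same edge condition.

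\textbf{Completeness.} $N$ is compact and the metric is a length metric built from finitely many compact pieces. It is therefore a complete metric space by Hopf--Rinow for manifolds with boundary, or directly because closed bounded sets are compact.

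I expect the main obstacle to be the boundary-vertex charts, together with verifying that the wedge angle at the end of an edge, measured in the truncation plane, equals the dihedral angle. This follows from the orthogonality of the hexagonal and truncation faces.
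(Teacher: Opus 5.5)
Your argument is correct and follows essentially the same route as the paper: the global length vector makes the hexagons match, orthogonal truncation faces meet without bending so that the boundary-vertex angle equals the edge angle sum, the angle-sum condition removes the cone singularities along edges, and completeness follows from compactness. The paper obtains the local chart verification by citing Frigerio--Petronio's consistency criterion and passes to the orientation double cover for nonorientable $N$. Your direct chart-by-chart check instead uses only that no edge is identified with itself in reverse, so it needs no orientability hypothesis.
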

\begin{proof}
Apply the consistency criterion for fully truncated tetrahedra in \cite[Theorem~2.13 and Remark~2.14]{FrigerioPetronio}; see also \cite[Definition~1.2 and the discussion following Definition~1.3]{FengGeHua}. The global length vector ensures that the hexagonal faces match. Orthogonality makes the truncation triangles meet without bending along their sides; their total angle at a boundary vertex is the angle sum at the corresponding quotient edge. The angle-sum equations therefore remove the cone singularities. For nonorientable $N$, lift the prescribed gluing to the orientation double cover and descend the local conclusion. The glued length space is compact, hence complete with its boundary included.
\end{proof}

\subsection{The length formula and generalized angles}\label{subsec:generalized-angles}

Following \cite[Definition~3.1]{Zhao}, choose a compatible ordering of the six local edges of a tetrahedron (called an \emph{edge orientation})
\[
 (e_1,e_2,e_3,e_4,e_5,e_6)
\]
so that $(e_1,e_4)$, $(e_2,e_5)$, and $(e_3,e_6)$ are opposite pairs and $e_1,e_2,e_3$ have a common endpoint. For example, for vertices $0,1,2,3$ one may take
\[
 (e_1,e_2,e_3,e_4,e_5,e_6)=(01,02,03,23,13,12).
\]
Put $x_i=\cosh l_i$. For the dihedral angle at $e_1$, use the length formula of Luo--Yang \cite[Lemma~4.3]{LuoYang}, written in this ordering as in \cite[Eq.~(3.1)]{Zhao}:
\begin{equation}\label{eq:phi}
 \varphi(x_1,\ldots,x_6)=
 \frac{x_2x_3+x_5x_6+x_1x_2x_5+x_1x_3x_6-x_1^2x_4+x_4}
 {\sqrt{2x_1x_2x_6+x_1^2+x_2^2+x_6^2-1}
  \sqrt{2x_1x_3x_5+x_1^2+x_3^2+x_5^2-1}}.
\end{equation}
For each $i$, define $\varphi_i(x)$ by the same formula after relabelling the edges so that $e_i$ is the target edge; thus $\varphi_1=\varphi$. The value is independent of the compatible ordering of the other edges. For a length vector $l\in\R_{>0}^6$, write $\varphi_i(l)=\varphi_i(\cosh l_1,\ldots,\cosh l_6)$. A genuine hyper-ideal tetrahedron has $\alpha_i=\arccos\varphi_i(l)$. Luo--Yang's length criterion \cite[Proposition~4.4]{LuoYang} is
\begin{equation}\label{eq:length-criterion}
 l\in L\quad\Longleftrightarrow\quad
 \varphi_i(l)\in(-1,1)\quad(i=1,\ldots,6).
\end{equation}

For $l\in\R_{\ge0}^6$, use the convention of \cite[Definition~2.3]{Zhao}: set
\[
 \clamp_{[-1,1]}(y)=\max\{-1,\min\{y,1\}\},
\]
and define the \emph{extended dihedral angles} by
\begin{equation*}
 \alpha_i(l)=\arccos\bigl(\clamp_{[-1,1]}(\varphi_i(l))\bigr)\in[0,\pi].
\end{equation*}
The maps $\alpha_i:\R_{\ge0}^6\to[0,\pi]$ are continuous by \cite[Corollary~4.9; see also Lemma~4.7]{LuoYang}. In the explicit formula this also follows because the two radicands are positive on $[1,\infty)^6$.  We repeatedly use the elementary implication
\begin{equation}\label{eq:angle-lower-principle}
 \varphi_i\le\Phi
 \quad\Longrightarrow\quad
 \alpha_i\ge\arccos\bigl(\clamp_{[-1,1]}\Phi\bigr),
\end{equation}
because $\clamp_{[-1,1]}$ is nondecreasing and $\arccos$ is decreasing.

\subsection{Curvature, co-volume, and the flow}

For $l\in\R_{>0}^E$, use the same notation $\alpha(\widehat e;l)$ for the extended angle in the unique tetrahedron containing $\widehat e$, evaluated at its induced six-tuple. Define the generalized curvature map $\Ktil:\R_{>0}^E\to\R^E$ by
\begin{equation*}
 \Ktil_e(l)=2\pi-\sum_{\widehat e\in\PE^{-1}(e)}\alpha(\widehat e;l),\qquad e\in E.
\end{equation*}
If $l\in L(M,\mathcal T)$, then $\Ktil_e(l)=K_e(l)$. For a cosh-length assignment $x$, write $\alpha(\widehat e;x)$ for the angle at the corresponding lengths $l_f=\arccosh x_f$. We omit the assignment from $\alpha(\widehat e)$ only when it is fixed by the context.

Luo introduced a combinatorial Ricci flow for hyper-ideal metrics \cite{Luo2005}. We use the extended flow of Feng--Ge--Hua \cite[Eq.~(1.3)]{FengGeHua}, given by
\begin{equation}\label{eq:flow}
 \frac{\dd}{\dd t}l_e(t)=\Ktil_e(l(t))l_e(t)
 \qquad\text{for every }e\in E,\quad t\ge0.
\end{equation}
It has a unique solution for all $t\ge0$ from any $l(0)\in\R_{>0}^E$ \cite[Theorem 1.5]{FengGeHua}.  In the variables $x_e=\cosh l_e$,
\begin{equation}\label{eq:xflow}
 \dot x_e(t)=\sinh(l_e(t))l_e(t)\Ktil_e(l(t)).
\end{equation}
Thus, if $x_e$ lies on an upper boundary and the incident angle sum is larger than $2\pi$, then $\Ktil_e<0$ and $\dot x_e<0$, since $\sinh(l_e)l_e>0$. The flow therefore points inward in that coordinate.

For a local tetrahedron $\widehat\sigma$, let $\cov_{\widehat\sigma}:\R^6\to\R$ be the extended co-volume of Luo--Yang \cite[Corollary~4.12]{LuoYang}. On genuine length vectors $u\in L$, it agrees with
\[
 \cov_{\widehat\sigma}(u)=2\Vol(u)+\sum_{i=1}^6\alpha_i(u)u_i.
\]
The extension uses $\alpha_i(u^+)$ for $u\in\R^6$, where $u_i^+=\max\{u_i,0\}$, and is $C^1$ and convex. Define
\begin{align*}
 \cov:\R^E&\to\R,\qquad
 \cov(l)=\sum_{\widehat\sigma\in\widehat{\mathcal T}}
 \cov_{\widehat\sigma}\bigl((l\circ\PE)|_{E(\widehat\sigma)}\bigr),\\
 H:\R^E&\to\R,\qquad H(l)=\cov(l)-2\pi\sum_{e\in E}l_e.
\end{align*}
The function $H$ is also $C^1$ and convex. For $l\in\R_{>0}^E$, differentiation gives \cite[Eq.~(4.1)]{FengGeHua}
\begin{equation*}
 \frac{\partial H}{\partial l_e}(l)=-\Ktil_e(l),\qquad e\in E.
\end{equation*}
Consequently,
\begin{equation}\label{eq:Hdecrease}
 \frac{\dd}{\dd t}H(l(t))
 =-\sum_{e\in E}\Ktil_e(l(t))^2l_e(t)\le0.
\end{equation}

We use the following edge-dependent compactness principle; its proof adapts \cite[Proposition~2.13]{Zhao} and records that the zero-curvature vector lies in the same box.

\begin{proposition}\label{prop:compactness}
Let $l:[0,\infty)\to\R_{>0}^E$ be a solution of \eqref{eq:flow}. Suppose there are constants $c_e,C_e$ ($e\in E$), with $0<c_e\le C_e<\infty$, such that
\[
 c_e\le l_e(t)\le C_e\qquad\text{for every }e\in E\text{ and }t\ge0.
\]
Then there exists $l^*\in D:=\prod_{e\in E}[c_e,C_e]$ such that $\Ktil(l^*)=0$.
\end{proposition}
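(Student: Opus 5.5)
Since $H$ is a Lyapunov function for the flow by \eqref{eq:Hdecrease} and the trajectory stays in the compact box $D$, the plan is to combine these two facts. First, the function $t\mapsto H(l(t))$ is nonincreasing. Because $H$ is continuous and $D$ is compact, $H$ is bounded below on $D$. Hence $H(l(t))$ converges to a finite limit as $t\to\infty$. Integrating \eqref{eq:Hdecrease} then gives
\[
 \int_0^\infty\sum_{e\in E}\Ktil_e(l(t))^2\,l_e(t)\dd t
 =H(l(0))-\lim_{t\to\infty}H(l(t))<\infty .
\]
Since $l_e(t)\ge c_e>0$, we also get $\int_0^\infty|\Ktil(l(t))|^2\dd t<\infty$.

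Next we extract a sequence of times along which the curvature tends to zero. From the finiteness of the integral, for every $n$ there is a time $t_n\in[n,n+1]$ with $|\Ktil(l(t_n))|^2\le\int_n^{n+1}|\Ktil(l(t))|^2\dd t$. The right-hand side tends to $0$. This step avoids needing uniform continuity of $t\mapsto\Ktil(l(t))$, although that continuity would also hold, since $\dot l$ is bounded on $D$ because $|\Ktil_e|\le 2\pi+v(e)\pi$.

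By compactness of $D$, a subsequence $l(t_{n_k})$ converges to some $l^*\in D$. Since every $c_e>0$, the point $l^*$ lies in $\R_{>0}^E$. There $\Ktil$ is continuous, because each extended angle $\alpha_i$ is continuous on $\R_{\ge0}^6$, as recalled in \Cref{subsec:generalized-angles}. Therefore $\Ktil(l^*)=\lim_k\Ktil(l(t_{n_k}))=0$.

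The only point needing care is the lower bound on $H$ together with continuity. Both follow from $H$ being $C^1$ on $\R^E$ and from the trajectory being confined to the compact set $D$. The essential contribution of the box hypothesis is that it supplies this compactness and keeps the limit $l^*$ away from the coordinate boundary $l_e=0$. So I expect no serious obstacle; the main subtlety is simply recording that $l^*$ stays inside $D$, which is immediate because $D$ is closed.
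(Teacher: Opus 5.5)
Your proposal is correct and follows essentially the same route as the paper: $H$ is bounded on the compact box $D$, so $H(l(t))$ converges; \eqref{eq:Hdecrease} and $l_e\ge c_e>0$ force $\Ktil(l(t_n))\to0$ along times $t_n\in[n,n+1]$; and a convergent subsequence together with continuity of $\Ktil$ gives $\Ktil(l^*)=0$ with $l^*\in D$. The only difference is cosmetic: the paper obtains $t_n$ from the mean value theorem applied to $H(l(t))$ on $[n,n+1]$, while you use the integrated form of \eqref{eq:Hdecrease} and pick $t_n$ by averaging.
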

\begin{proof}
The set $D$ is compact. The continuous function $H$ is bounded on $D$, and \eqref{eq:Hdecrease} shows that $H(l(t))$ is nonincreasing, hence converges to a finite limit. For each integer $n\ge0$, the mean value theorem gives $t_n\in(n,n+1)$ with
\[
 \frac{\dd}{\dd t}H(l(t))\bigg|_{t=t_n}
 =H(l(n+1))-H(l(n))\longrightarrow0\qquad(n\to\infty).
\]
Therefore \eqref{eq:Hdecrease} gives
\[
 \sum_{f\in E}\Ktil_f(l(t_n))^2l_f(t_n)\longrightarrow0\qquad(n\to\infty).
\]
Each summand is nonnegative, and $l_e(t_n)\ge c_e>0$, so $\Ktil_e(l(t_n))\to0$ as $n\to\infty$ for every $e\in E$.
A subsequence of $l(t_n)$ converges to some $l^*\in D$. Continuity of the extended angles, hence of $\Ktil$, gives $\Ktil(l^*)=0$.
\end{proof}

\subsection{Conventions for explicit face pairings}\label{subsec:encoding}
Every abstract tetrahedron in an explicit construction has vertices $0,1,2,3$. Its face $F_f$ is opposite vertex $f$. A record $(u,p)$ in column $f$ of tetrahedron $t$ specifies
\[
 F_f^{T_t}\longrightarrow F_{p(f)}^{T_u},\qquad j\longmapsto p(j)\quad(j\ne f).
\]
The string $p(0)p(1)p(2)p(3)$ records a permutation, not a cyclic notation. The omitted vertex $f$ is not glued by this face map: its image $p(f)$ records only the target face number. The reverse record must use $p^{-1}$. Self-pairings of distinct faces of one tetrahedron are permitted, but a face is never paired with itself. Quotient edges are equivalence classes of local edges under these face maps. The complete finite records used below, together with the link-checking procedure, are in Appendix~\ref{app:examples}.

\section{Invariant boxes and local bichromaticity}\label{sec:compactness}

Using the compactness principle from \Cref{prop:compactness}, we prove an invariant-box theorem and derive the local bichromatic criterion. The final subsection gives its cyclic examples.

\subsection{The invariant-box theorem}\label{subsec:abstract-box}

We now formulate an abstract box criterion in the $x$-coordinates.

\begin{definition}\label{def:theta-box}
For constants $1<L_e<U_e\le3$, set
\[
 Q=\prod_{e\in E}[L_e,U_e].
\]
For an edge $e$, define
\[
 \Theta_e^+(Q)=\inf_{\substack{x\in Q\\x_e=U_e}}
 \sum_{\widehat e\in\PE^{-1}(e)}\alpha(\widehat e;x),
 \qquad
 \Theta_e^-(Q)=\sup_{\substack{x\in Q\\x_e=L_e}}
 \sum_{\widehat e\in\PE^{-1}(e)}\alpha(\widehat e;x).
\]
\end{definition}

The following criterion abstracts the barrier and compactness arguments of Feng--Ge--Hua \cite[Theorems~5.1--5.3 and proof of Theorem~1.9]{FengGeHua}; compare also Zhao \cite[Theorem~5.1 and proof of Theorem~1.5]{Zhao}. Here the inward-pointing conditions are stated directly as inequalities for the angle sums on the faces of a prescribed box.

\begin{theorem}\label{thm:abstract-box}
Assume that, for every quotient edge $e\in E$,
\[
 \Theta_e^+(Q)>2\pi,
 \qquad
 \Theta_e^-(Q)<2\pi.
\]
Then every flow with $x(0)\in\operatorname{int}Q$ satisfies $x(t)\in Q$ for all $t\ge0$.  Consequently, $(M,\mathcal T)$ admits a unique zero-curvature hyper-ideal metric, and the extended combinatorial Ricci flow converges to it exponentially from any positive initial condition.
\end{theorem}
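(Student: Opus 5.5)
We prove invariance by a first-exit-time barrier argument in the coordinates $x_e=\cosh l_e$, and then obtain existence, uniqueness and convergence from \Cref{prop:compactness}, the convexity of $H$, and a linearization at the critical point.

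\emph{Invariance.} Let $x(0)\in\operatorname{int}Q$ and define
\[
T=\sup\{t\ge0: x(s)\in Q\text{ for all }s\in[0,t]\}.
\]
Suppose $T<\infty$. By continuity $x(T)\in\partial Q$. Moreover, some coordinate $e$ satisfies $x_e(T)=U_e$ or $x_e(T)=L_e$, and leaves $[L_e,U_e]$ at times arbitrarily close to $T$ from the right.

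Consider the upper case, $x_e(T)=U_e$. The point $x(T)$ lies in $Q$ with $x_e=U_e$, so by definition its incident angle sum is at least $\Theta_e^+(Q)>2\pi$. Hence $\Ktil_e(l(T))<0$. By \eqref{eq:xflow}, $\dot x_e(T)<0$, so $x_e(t)<U_e$ on $(T,T+\delta)$.

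The lower case is symmetric: $\Theta_e^-(Q)<2\pi$ gives $\Ktil_e>0$ and $\dot x_e(T)>0$.

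Several coordinates may sit on the boundary at time $T$. Each of them has a strictly inward derivative. Every other coordinate is in the open interval and stays inside for a short time by continuity. So $x(t)\in Q$ on $[T,T+\delta)$, which contradicts the definition of $T$. Therefore $x(t)\in Q$ for all $t\ge0$.

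Since $Q\subset[L,3]^E$ with $\min_e L_e>1$, the lengths $l_e=\arccosh x_e$ are bounded between the positive constants $c_e=\arccosh L_e$ and $C_e=\arccosh U_e$. \Cref{prop:compactness} then yields $l^*\in D$ with $\Ktil(l^*)=0$.

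\emph{Genuineness of $l^*$.} We must show $l^*\in L(M,\mathcal T)$, so that it is a genuine hyper-ideal metric. I expect this to be the main obstacle. The plan is to quote the result of Feng--Ge--Hua and Zhao that a zero of $\Ktil$ is automatically genuine, which is the step used in \cite[proof of Theorem~1.9]{FengGeHua}. The underlying reason is this. If some local tetrahedron were degenerate, its extended angles would have $\varphi_i$ clamped, giving angles in $\{0,\pi\}$ with a specific pattern. Combined with $\sum\alpha=2\pi$ at every edge and the convexity and extension of co-volume, this leads to a contradiction. This is where the hypothesis $U_e\le3$ may be needed: short lengths force $\varphi_i\in(-1,1)$. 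Failing that, I would check directly from \eqref{eq:phi} that on $[1,3]^6$ the clamping cannot occur at a zero of $\Ktil$.

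\emph{Uniqueness.} On $L(M,\mathcal T)$ the co-volume is strictly convex, since the Jacobian of the angles is negative definite by Luo--Yang. Hence $H$ is strictly convex near $l^*$, and a genuine zero is a strict minimizer of the convex $C^1$ function $H$. Any other zero $l'$ of $\Ktil=-\nabla H$ would also be a minimizer. Convexity then forces $H$ to be constant on the segment $[l^*,l']$, contradicting strict convexity near $l^*$. So the zero is unique among all positive vectors, in particular among genuine metrics.

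\emph{Convergence.} First, from an interior initial condition, the trajectory is bounded in $D$. Its $\omega$-limit set consists of zeros of $\Ktil$, hence equals $\{l^*\}$.

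Second, the linearization of \eqref{eq:flow} at $l^*$ is $-\operatorname{diag}(l^*)\nabla^2H(l^*)$. This matrix is similar to a negative definite one, so $l^*$ is a hyperbolic sink and convergence is exponential.

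Third, for an arbitrary positive initial condition, use \cite[Theorem~1.5]{FengGeHua} or the argument of \cite{Zhao}. The existence of the genuine zero $l^*$ together with convexity makes $H$ proper: it has a unique minimizer and is coercive along rays because it grows linearly. Sublevel sets are therefore compact and the trajectory is bounded. The previous step then gives exponential convergence to $l^*$.
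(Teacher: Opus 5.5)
\textbf{Gap: genuineness of $l^*$.} Your invariance argument and your use of \Cref{prop:compactness} are correct and match the paper; the supremum-of-times version is equivalent to the paper's first-contact argument. The gap is the genuineness of $l^*$, which is the real content of the second half.

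Your primary plan is to quote a result that every zero of $\Ktil$ is automatically genuine. That does not work. No such statement is proved in Feng--Ge--Hua or Zhao. The step in \cite[proof of Theorem~1.9]{FengGeHua} you point to is different: it first confines the limit to $\cosh l\le3$ and then applies a small-length criterion. Your sketched justification (clamped angles in $\{0,\pi\}$ plus convexity of the co-volume give a contradiction) is not an argument. Nothing in it prevents degenerate tetrahedra from contributing $0$ or $\pi$ to angle sums that equal $2\pi$. If zeros of $\Ktil$ were automatically genuine, the hypothesis $U_e\le3$ in \Cref{def:theta-box} would serve no purpose. It is there precisely for this step.

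\textbf{The missing ingredient.} What you need is the pointwise statement you mention only as a fallback, and it has nothing to do with $\Ktil(l^*)=0$. Feng--Ge--Hua's small-length criterion \cite[Theorem~3.9]{FengGeHua} says that every six-tuple whose cosh-lengths all lie in $(1,3]$ belongs to $L$, i.e.\ satisfies \eqref{eq:length-criterion}. Since $x(l^*)\in Q\subset(1,3]^E$, every local tetrahedron at $l^*$ is genuine, so $l^*\in L(M,\mathcal T)$. The strict bounds $L_e>1$ matter here: they keep you off the faces $x_i=1$, where $\varphi_i=1$ and the angle is $0$. With this citation inserted (or a proof of it from \eqref{eq:phi}), the rest of your argument goes through.

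\textbf{Uniqueness.} Your convexity proof is a valid self-contained substitute for the rigidity theorem \cite[Theorem~1.2]{LuoYang} that the paper cites, and it even gives uniqueness among all positive zeros of $\Ktil$. Fix one sign: on $L$ the Jacobian $\partial\alpha/\partial l$ is positive definite, because it is the Hessian of the convex co-volume. It is $\partial\Ktil/\partial l$ that is negative definite.

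\textbf{Convergence from arbitrary initial data.} The relevant citation is \cite[Theorem~1.6]{FengGeHua}; Theorem~1.5 gives only long-time existence. Your own sketch has a hole. Coercivity of $H$ bounds the trajectory from above, but not away from $\{l_e=0\}$. There $\dot H=0$ no longer forces $\Ktil_e=0$, so the $\omega$-limit argument breaks down. You would need a lower barrier, for example the fact used in the proof of \Cref{thm:local-bichromatic}: incident angles tend to $0$ uniformly as $x_e\downarrow1$ when the other cosh-lengths stay bounded.
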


\begin{proof}
At a first contact with an upper face $x_e=U_e$, the angle sum is larger than $2\pi$, so $\Ktil_e<0$ and \eqref{eq:xflow} gives $\dot x_e<0$, contradicting $\dot x_e\ge0$ at first contact from below.  At a first contact with a lower face $x_e=L_e$, the angle sum is smaller than $2\pi$, so $\dot x_e>0$, contradicting $\dot x_e\le0$. Hence the trajectory cannot reach the boundary at a finite first contact time, and $x(t)\in Q$ for all $t\ge0$.

By \Cref{prop:compactness}, a generalized zero-curvature metric $l^*$ exists with $x(l^*)\in Q$. Thus $1<x_e(l^*)\le3$, or equivalently $0<l_e^*\le\arccosh3$, for every $e\in E$.  Feng--Ge--Hua's small-length criterion \cite[Theorem~3.9]{FengGeHua} implies that every local tetrahedron is genuine hyper-ideal; equivalently, \eqref{eq:length-criterion} holds.  Hence $l^*\in L(M,\mathcal T)$.  Uniqueness follows from rigidity \cite[Theorem 1.2]{LuoYang}. Once a genuine zero-curvature metric exists, convergence from any positive initial vector, not only those in $Q$, follows from \cite[Theorem 1.6]{FengGeHua}.
\end{proof}

\subsection{A local bichromatic theorem}\label{subsec:bichromatic}

To deduce \Cref{thm:local-bichromatic} from \Cref{thm:abstract-box}, we first define local bichromaticity and prove the tetrahedral angle-sum estimate in \Cref{lem:bichromatic-tetrahedron}. This theorem permits valence 7 and uses a common upper cosh-length bound of 3, without the valence-dependent boxes of the later sections.

\begin{definition}\label{def:local-bichromatic}
A tetrahedron $\widehat\sigma\in\widehat{\mathcal T}$ is \emph{bichromatic} if
\[
 \#\PE\bigl(E(\widehat\sigma)\bigr)\le2.
\]
Thus its 6 local edges represent at most 2 quotient-edge classes. We say that $(M,\mathcal T)$ is \emph{locally bichromatic at low-valence edges} if every tetrahedron containing an occurrence of an edge $e\in E$ with $7\le v(e)\le9$ is bichromatic.
\end{definition}

The local estimate needed below is a sum inequality inside a single tetrahedron.  Label the vertices of a combinatorial tetrahedron by $1,2,3,4$ and identify its 6 edges with the edges of $K_4$.  For a nonempty subgraph $G\subset K_4$, call the edges of $G$ the $a$-edges and the complementary edges the $b$-edges.  Assign cosh-length $a$ to the former and cosh-length $b$ to the latter. Figure~\ref{fig:bichromatic-patterns} shows the 10 nonempty subgraphs up to isomorphism.

\begin{figure}[!htbp]
 \centering
 \includegraphics[width=\textwidth]{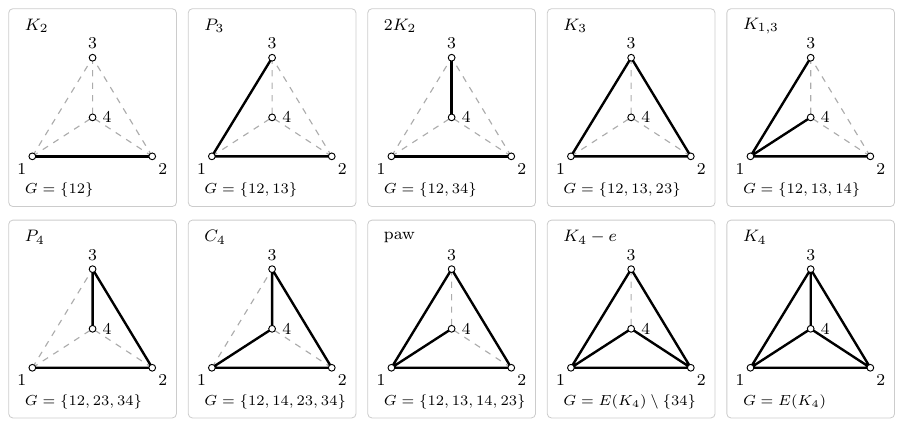}
 \caption{The 10 isomorphism classes of nonempty subgraphs $G\subset K_4$.  Solid black edges are the $a$-edges, namely the local occurrences of the distinguished quotient edge; dashed gray edges are the $b$-edges.}
 \label{fig:bichromatic-patterns}
\end{figure}

\begin{lemma}\label{lem:bichromatic-tetrahedron}
Let $G\subset K_4$ be nonempty and let $1<b\le a\le3$.  Give the edges in $G$ cosh-length $a$ and the remaining edges cosh-length $b$.  If $\alpha_{ij}(a,b)$ denotes the dihedral angle at $ij$, then
\begin{equation}\label{eq:bichromatic-sum}
 \sum_{ij\in E(G)}\alpha_{ij}(a,b)
 \ge |E(G)|\,\beta(a),
 \qquad
 \beta(a):=\arccos\frac{a}{2a-1}.
\end{equation}
Equality holds when $b=a$.
\end{lemma}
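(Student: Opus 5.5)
The approach is a direct case analysis over the 10 isomorphism classes of nonempty subgraphs $G\subset K_4$ in Figure~\ref{fig:bichromatic-patterns}, using the explicit formula \eqref{eq:phi}.

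\textbf{Equality case.} When $b=a$ all six cosh-lengths equal $a$, and \eqref{eq:phi} becomes
\[
 \varphi=\frac{2a^2+a^3+a}{2a^3+3a^2-1}=\frac{a(a+1)^2}{(a+1)^2(2a-1)}=\frac{a}{2a-1}\in(0,1).
\]
Hence every angle equals $\beta(a)$, which proves the equality statement. The same computation reduces the inequality to comparing $\varphi_{ij}(a,b)$ with $a/(2a-1)$.

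\textbf{Per-edge reduction.} For each class $G$ and each edge $ij\in E(G)$, I will substitute $x=a$ on $G$ and $x=b$ off $G$ into \eqref{eq:phi}. Up to the symmetries of $G$ there are only a few distinct edge types, and each gives an explicit function $\varphi_{ij}(a,b)$.

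The first goal is the pointwise bound
\[
 \varphi_{ij}(a,b)\le \frac{a}{2a-1}\qquad\text{for }1<b\le a\le 3.
\]
By \eqref{eq:angle-lower-principle} this gives $\alpha_{ij}\ge\beta(a)$ edge by edge, and summing over $E(G)$ yields \eqref{eq:bichromatic-sum}.

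Both sides are ratios with positive denominators on $[1,\infty)^6$, and the radicands are positive there. So I will move the square roots to one side and square when the numerator is positive; when it is nonpositive the bound is trivial. This leaves a polynomial inequality $P_G(a,b)\ge0$ on the triangle $1<b\le a\le3$.

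The equality at $b=a$ should force a factor $(a-b)$, and possibly $(a-b)^2$. After factoring it out, I will check that the cofactor has constant sign, for instance by the substitution $b=1+s$, $a=b+t$ with $s\in(0,2]$ and $t\ge0$ and inspection of coefficients.

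\textbf{Main obstacle.} The pointwise bound may fail for some edge types where an $a$-edge sits in an asymmetric position. Candidates are the end edges of a path $P_3$ versus its middle edge, or the pendant edge of a paw. In such edges, shrinking neighbouring $b$-edges can open one angle while closing another.

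For these classes I will instead prove the summed inequality directly. First, I will group the $a$-edges of $G$ into symmetric orbits and reduce to a sum of at most three arccos terms. Next, I will use that $\arccos$ is convex on $[0,1)$ together with $\varphi\ge0$ there (the case $\varphi<0$ only helps), so Jensen gives
\[
 \sum_{ij}\alpha_{ij}\ge |E(G)|\arccos\Bigl(\tfrac{1}{|E(G)|}\textstyle\sum_{ij}\varphi_{ij}\Bigr).
\]
It then suffices to prove the averaged bound $\frac1{|E(G)|}\sum\varphi_{ij}\le a/(2a-1)$, which again reduces to a polynomial sign check after clearing the radicals, one orbit at a time.

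If even the averaged bound resists a clean algebraic proof, the fallback is a rigorous interval-arithmetic verification on the compact triangle $\{1\le b\le a\le 3\}$. Near the diagonal $b=a$ this would be combined with a first-order expansion in $a-b$, showing that the derivative in $b$ of the angle sum has the right sign, to handle the degenerate equality locus. I expect this diagonal analysis for the asymmetric classes to be the hardest step.
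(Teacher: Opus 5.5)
Your equality computation is correct. For the seven edge-transitive classes the pointwise bound $\varphi_{ij}(a,b)\le a/(2a-1)$ does hold; it is the integrated form of the paper's $\partial p_G/\partial b\ge0$, so that part of your plan works.

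The gap is in $P_4$, the paw, and $K_4-e$. There the pointwise bound is not merely hard to prove but false:
\begin{itemize}
\item For $K_4-e$, the edge opposite the missing edge has
\[
 \varphi-\frac{a}{2a-1}=\frac{(a-1)(a-b)}{(a+1)(2a-1)}>0\qquad(b<a).
\]
\item For the middle edge of $P_4$, $\varphi=35/57>3/5$ at $(a,b)=(3,2)$.
\item For the two paw edges at the vertex carrying the pendant edge, $\varphi=(a^2+b)/\sqrt{(2a-1)(b+1)(2a^2+b-1)}$, which equals $11/\sqrt{285}>3/5$ at $(3,2)$.
\end{itemize}
The pendant edge itself is harmless, and $P_3$ is edge-transitive with no middle edge.

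Everything therefore rests on your summed fallback, and its Jensen step goes the wrong way. Since $\arccos''(x)=-x(1-x^2)^{-3/2}\le0$ on $[0,1)$, $\arccos$ is \emph{concave} there. Jensen then gives $\sum_{ij}\alpha_{ij}\le|E(G)|\arccos(\bar\varphi)$, so an averaged cosine bound $\bar\varphi\le a/(2a-1)$ says nothing about the angle sum. In $P_4$ the end-edge cosines are even negative near $(a,b)=(3,1)$ while the middle one is positive. The values straddle the inflection point of $\arccos$, so no single convexity inequality is available.

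The missing idea is to differentiate angles rather than compare cosines. Since $A_G(a,a)=|E(G)|\beta(a)$ for $A_G=\sum_{ij\in E(G)}\alpha_{ij}$, it suffices to prove $\partial_bA_G\le0$ on the whole triangle $1<b\le a\le3$ and integrate from $b$ to $a$. The paper does this orbit by orbit, clearing positive square-root factors to get closed forms whose sign reduces to polynomial positivity. For instance, with $Y=2a^2+b-1$ and $D=4a^2-ab+a+b-1$,
\[
 -\frac{\partial A_{K_4-e}}{\partial b}=\frac{\sqrt{a-1}}{\sqrt{b+1}\sqrt D}\Bigl(\frac{4a(a+1)}{Y}-1\Bigr).
\]
Here the $-1$ is exactly the singleton edge whose angle shrinks, outweighed by the four other target edges.

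Your interval-arithmetic fallback does not close the gap on its own. Equality holds along the entire diagonal $b=a$, so no finite interval check certifies the inequality near it. The first-order expansion you mention would have to become a derivative-sign statement on a neighbourhood of fixed width, which is essentially the paper's global derivative argument.
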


\begin{proof}
Since all cosh-lengths are at most $3$ and are strictly larger than one, the corresponding generalized tetrahedron is genuine hyper-ideal by the small-length criterion of Feng--Ge--Hua \cite[Theorem~3.9]{FengGeHua}.  Hence its dihedral angles are ordinary smooth functions of $a$ and $b$.

Set
\[
 A_G(a,b)=\sum_{ij\in E(G)}\alpha_{ij}(a,b).
\]
At $b=a$ the tetrahedron is regular.  Substitution in \eqref{eq:phi} gives
\[
 \cos\alpha_{ij}
 =\frac{a(a+1)^2}{2a^3+3a^2-1}
 =\frac{a}{2a-1},
\]
so that
\begin{equation}\label{eq:regular-target-sum}
 A_G(a,a)=|E(G)|\beta(a).
\end{equation}
It therefore suffices to prove
\begin{equation}\label{eq:bichromatic-monotonicity}
 \frac{\partial A_G}{\partial b}(a,b)\le0
 \qquad(1<b\le a\le3).
\end{equation}

We verify the estimate for all 10 isomorphism classes, since the claim concerns the sum of all target angles. For the 3 non-edge-transitive graphs, a single target angle need not be monotone even though their weighted sum is.

Put
\[
 X=a+2b^2-1,
 \qquad
 Y=2a^2+b-1.
\]
For the 7 edge-transitive graphs, every target edge has a common cosine $p_G(a,b)$.  Direct substitution in \eqref{eq:phi} and differentiation give Table~\ref{tab:bichromatic-edge-transitive}.
\begin{table}[t]
\centering
\normalsize
\renewcommand{\arraystretch}{1.25}

\begin{tabular}{c|c|c}
\toprule
$G$ & $p_G(a,b)$ & $\partial p_G/\partial b$\\
\midrule
$K_2$
& $\displaystyle\frac{b(2b+1-a)}{X}$
& $\displaystyle\frac{(a-1)(2b^2+4b+1-a)}{X^2}$\\[2mm]
$P_3$
& $\displaystyle\frac{b\sqrt{(a+1)(b+1)}}{\sqrt X\sqrt Y}$
& $\displaystyle\frac{(a-1)\sqrt{a+1}\,Q(a,b)}{\sqrt{b+1}\,X^{3/2}Y^{3/2}}$\\[2mm]
$2K_2$
& $\displaystyle\frac{2b^2+a-a^2}{X}$
& $\displaystyle\frac{4b(a^2-1)}{X^2}$\\[2mm]
$K_3$
& $\displaystyle\frac{b\sqrt{a+1}}{\sqrt{2a-1}\sqrt X}$
& $\displaystyle\frac{(a-1)\sqrt{a+1}}{\sqrt{2a-1}\,X^{3/2}}$\\[2mm]
$K_{1,3}$
& $\displaystyle\frac{a^2+b}{Y}$
& $\displaystyle\frac{a^2-1}{Y^2}$\\[2mm]
$C_4$
& $\displaystyle\frac{a(b+1)}{Y}$
& $\displaystyle\frac{2a(a^2-1)}{Y^2}$\\[2mm]
$K_4$
& $\displaystyle\frac{a}{2a-1}$
& $0$\\
\bottomrule
\end{tabular}
\caption{Cosines and derivatives for the 7 edge-transitive target graphs.}
\label{tab:bichromatic-edge-transitive}
\end{table}
Here
\[
 Q(a,b)=3a^2b+2a^2+2ab^3+2b^3+b^2-b-1>0.
\]
All entries in the third column are nonnegative on $1<b\le a\le3$.  Since $\arccos$ is decreasing, \eqref{eq:bichromatic-monotonicity} follows for these 7 graphs.

It remains to treat $P_4$, the paw graph, and $K_4-e$.  The following identities are obtained by substituting the corresponding two-valued length vectors in \eqref{eq:phi}, differentiating the sum of the target angles, and clearing the positive square-root factors.

For $G=P_4$, define
\begin{align*}
 R&=-a^2+3ab+a-b^2+b+1,\\
 P&=a^2+ab+a+b^2+b-1,
\end{align*}
and
\begin{align*}
 S={}&8a^3b+2a^3-4a^2b^2+8a^2b+4a^2
      +8ab^3+8ab^2-5ab-3a\\
    &\hspace{34mm}+2b^3+4b^2-3b-1.
\end{align*}
The two end edges have a common angle by symmetry. Thus $A_{P_4}$ is twice this common angle plus the middle-edge angle, and
\begin{equation*}
 -\frac{\partial A_{P_4}}{\partial b}
 =\frac{\sqrt{a^2-1}\,S}{XY\sqrt{RP}}.
\end{equation*}
The polynomial $P=a^2+ab+a+b^2+b-1$ is positive directly from $a,b>1$. To check $R$ and $S$, put $r=a-b$ and $s=b-1$.  Then $r,s\ge0$ and $r+s\le2$, while
\[
 R=r(2+s-r)+(s+2)^2>0,
\]
and
\begin{align*}
 S={}&8r^3s+10r^3+20r^2s^2+54r^2s+38r^2
      +24rs^3+102rs^2\\
    &\quad+135rs+54r+12s^4+68s^3+135s^2+108s+28>0.
\end{align*}
Thus $\partial_bA_{P_4}<0$.

For the paw graph, put
\[
 J=-a^3+4a^2b+3a^2+a+2b^2-1.
\]
The three edge orbits have sizes $2,1,1$, and the angle is constant on each orbit. Thus $A_{\mathrm{paw}}$ is the sum of the three representative angles with these weights, and
\begin{equation*}
 -\frac{\partial A_{\mathrm{paw}}}{\partial b}
 =\frac{2\sqrt{a-1}}{\sqrt J}
 \left(\frac{a+1}{Y}+\frac{a(b+1)}{X}\right)>0.
\end{equation*}
Indeed, $-a^3+4a^2b=a^2(4b-a)>0$ because $b>1$ and $a\le3$, and hence $J>0$.

Finally, for $G=K_4-e$, put
\[
 D=4a^2-ab+a+b-1.
\]
The target edge opposite the missing edge forms a singleton orbit, and the other four target edges have a common angle. Thus $A_{K_4-e}$ is the former angle plus four times the latter, and
\begin{equation*}
 -\frac{\partial A_{K_4-e}}{\partial b}
 =\frac{\sqrt{a-1}}{\sqrt{b+1}\sqrt D}
 \left(\frac{4a(a+1)}{Y}-1\right)>0.
\end{equation*}
Here
\[
 D\ge3a^2+2a-1>0,
 \qquad
 4a(a+1)-Y=2a^2+4a-b+1>0.
\]
This proves \eqref{eq:bichromatic-monotonicity} in all 10 cases.  Integrating from $b$ to $a$ and using \eqref{eq:regular-target-sum} proves \eqref{eq:bichromatic-sum}.
\end{proof}

We shall use the endpoint $a=3$.  The resulting regular angle satisfies
\begin{equation}\label{eq:beta-three}
 \beta(3)=\arccos\frac35>\frac{2\pi}{7}.
\end{equation}
For completeness, if $c=\cos(2\pi/7)$, then $8c^3+4c^2-4c-1=0$.  The polynomial $8t^3+4t^2-4t-1$ is strictly increasing for $t\ge1/2$ and takes the value $-29/125$ at $t=3/5$.  Since $c>1/2$, this gives $3/5<c$, hence \eqref{eq:beta-three}.

\begin{theorem}\label{thm:local-bichromatic}\label{thm:intro-bichromatic}
Let $(M,\mathcal T)$ be a closed pseudo $3$-manifold.  Assume that $v(e)\ge7$ for every quotient edge $e\in E$
and that $(M,\mathcal T)$ is locally bichromatic at every edge of valence 7, 8, or 9 in the sense of \Cref{def:local-bichromatic}.  Then $(M,\mathcal T)$ admits a unique zero-curvature hyper-ideal metric.  The extended combinatorial Ricci flow converges exponentially to this metric from any positive initial condition.
\end{theorem}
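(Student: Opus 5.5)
The plan is to apply \Cref{thm:abstract-box} to a box with a common upper endpoint and a common lower endpoint close to $1$:
\[
 Q=\prod_{e\in E}[1+\delta,\,3],
\]
where $\delta\in(0,2)$ is fixed in the third step. All conclusions of \Cref{thm:intro-bichromatic} (existence, uniqueness, exponential convergence from any positive initial vector) then follow from \Cref{thm:abstract-box}. So the work is to verify $\Theta_e^+(Q)>2\pi$ and $\Theta_e^-(Q)<2\pi$ for every $e\in E$.

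\emph{Upper faces, $7\le v(e)\le 9$.} Fix $x\in Q$ with $x_e=3$. Group the occurrences of $e$ by the tetrahedron containing them. Each such $\widehat\sigma$ is bichromatic by hypothesis. Hence its local edges consist of a nonempty set $G\subset K_4$ of occurrences of $e$, which carry cosh-length $a=3$, and possibly a second quotient class, which carries a common value $b\in(1,3]$. If no second class is present, set $b=a$. In either case $1<b\le a=3$, so \Cref{lem:bichromatic-tetrahedron} applies. The total angle contributed by $\widehat\sigma$ at occurrences of $e$ is therefore at least $|E(G)|\beta(3)$. Summing over the incident tetrahedra gives
\[
 \sum_{\widehat e\in\PE^{-1}(e)}\alpha(\widehat e;x)\ge v(e)\beta(3)\ge 7\beta(3)>2\pi
\]
by \eqref{eq:beta-three}. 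The bound is uniform in $x$, so it passes to the infimum $\Theta_e^+(Q)$.

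\emph{Upper faces, $v(e)\ge10$.} Here no bichromaticity is available, and this is the step I expect to be the main obstacle. I would prove a uniform single-angle bound: for every $x\in[1,3]^6$ with $x_1=3$,
\[
 \varphi_1(x)\le\cos(2\pi/10)-\eta
\]
for some fixed $\eta>0$. Then \eqref{eq:angle-lower-principle} gives each incident angle at least $2\pi/10+\eta'$, and summing over at least $10$ occurrences gives $\Theta_e^+(Q)>2\pi$. I would first try to extract this bound from Feng--Ge--Hua's minimum-valence-$10$ argument \cite[Theorems~5.1--5.3]{FengGeHua}, or from Zhao's corresponding estimate \cite{Zhao}, since both control exactly the angle at an edge with $x_e=3$ when the other cosh-lengths lie in $[1,3]$. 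Failing that, I would maximize $\varphi_1$ directly over the compact face $\{x_1=3\}\cap[1,3]^6$, using monotonicity of \eqref{eq:phi} in the individual variables to push the maximum to vertices of the cube, and certify the resulting finite list of values, by hand or by interval evaluation.

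\emph{Lower faces.} Put $x_1=1$ in \eqref{eq:phi}. The numerator becomes $(x_2+x_6)(x_3+x_5)$, and the radicands become $(x_2+x_6)^2$ and $(x_3+x_5)^2$, so $\varphi_1\equiv1$ there. Since $\varphi_1$ is continuous on the compact set $[1,3]^6$ (the radicands are positive), it is uniformly continuous. Hence for every $\varepsilon>0$ there is $\delta>0$ such that $x_1\le1+\delta$ forces every extended angle at that edge to be at most $\varepsilon$. Choose $\varepsilon$ with $v(e)\varepsilon<2\pi$ for all of the finitely many edges, and take the corresponding $\delta$, shrinking it if necessary so that $\delta<2$. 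Then every incident angle sum on a lower face $x_e=1+\delta$ is at most $v(e)\varepsilon<2\pi$, which gives $\Theta_e^-(Q)<2\pi$. The condition $1<1+\delta<3$ required in \Cref{def:theta-box} also holds, and \Cref{thm:abstract-box} applies.
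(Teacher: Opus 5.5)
Your proposal is correct and follows the paper's proof. It uses the same box $[L,3]^E$, the same uniform-continuity argument from $\varphi(1,x_2,\dots,x_6)=1$ on the lower faces, and \Cref{lem:bichromatic-tetrahedron} with $7\beta(3)>2\pi$ on the upper faces of valences 7--9; for $v(e)\ge10$ the paper simply cites Feng--Ge--Hua's longest-edge estimate \cite[Corollary~3.7]{FengGeHua}, $\alpha(\widehat e)>\pi/5$ on the face $x_e=3$.

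Your uniform bound there is true, since the maximum of $\varphi_1$ on $\{x_1=3\}\cap[1,3]^6$ is $\varphi(3,3,3,1,3,3)=4/5<\cos(\pi/5)$. Your fallback of reaching it by coordinatewise monotonicity would not work, however, because at this height $\varphi$ is not monotone in the adjacent variables: $\varphi(3,x_2,1,1,1,1)=(x_2-1)/(x_2+3)$ increases while $\varphi(3,x_2,3,1,1,3)$ decreases on $[1,3]$ (compare the restriction $C\le1+\sqrt2$ in \Cref{lem:upper-face-monotonicity}). What is needed is the three-endpoint reduction \cite[Theorem~3.6]{FengGeHua} or the cited corollary.
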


\begin{proof}
Let
\[
 V_{\max}=\max_{e\in E}v(e).
\]
We construct an invariant box with common upper endpoint $3$.  First observe that, for the angle at the first local edge,
\[
 \varphi(1,x_2,x_3,x_4,x_5,x_6)=1
 \qquad(1\le x_2,\ldots,x_6\le3),
\]
because both the numerator and denominator of \eqref{eq:phi} reduce to
\[
 (x_2+x_6)(x_3+x_5).
\]
Consequently the corresponding extended angle is zero.  Define
\[
 \eta(L)=
 \sup_{\substack{1\le x_1\le L\\1\le x_2,\ldots,x_6\le3}}
 \alpha_1(x_1,\ldots,x_6).
\]
The extended-angle function is continuous, hence uniformly continuous, on the compact cube $[1,3]^6$.  Since $\alpha_1(1,x_2,\ldots,x_6)=0$ uniformly in the other coordinates, this uniform continuity implies
\[
 \eta(L)\longrightarrow0\qquad(L\downarrow1).
\]
Choose $L>1$ so close to one that
\begin{equation*}
 V_{\max}\eta(L)<2\pi,
\end{equation*}
and set
\[
 Q_L=[L,3]^E.
\]
On a lower face $x_e=L$, every local angle at an occurrence of $e$ is at most $\eta(L)$.  Thus
\[
 \sum_{\widehat e\in\PE^{-1}(e)}\alpha(\widehat e)
 \le v(e)\eta(L)
 \le V_{\max}\eta(L)<2\pi.
\]
Hence $\Theta_e^-(Q_L)<2\pi$ for every edge.

Now let $x_e=3$.  Suppose first that $7\le v(e)\le9$.  For each tetrahedron $\widehat\sigma$ containing one or more occurrences of $e$, let $G_{\widehat\sigma}\subset K_4$ consist of the local edges projecting to $e$.  By local bichromaticity, all complementary local edges, if present, project to one quotient edge and therefore have one common cosh-length $b\in[L,3]$.  Applying \Cref{lem:bichromatic-tetrahedron} in each tetrahedron and summing gives
\begin{align*}
 \sum_{\widehat e\in\PE^{-1}(e)}\alpha(\widehat e)
 &\ge
 \sum_{\widehat\sigma}|E(G_{\widehat\sigma})|\,\beta(3)\\
 &=v(e)\beta(3)
 \ge7\arccos\frac35
 >2\pi.
\end{align*}

Suppose instead that $v(e)\ge10$.  On the upper face $x_e=3$, every local occurrence of $e$ is a longest edge of its tetrahedron.  Feng--Ge--Hua's longest-edge estimate gives
\[
 \alpha(\widehat e)>\frac\pi5
\]
for every such occurrence \cite[Corollary~3.7]{FengGeHua}.  Therefore
\[
 \sum_{\widehat e\in\PE^{-1}(e)}\alpha(\widehat e)
 >v(e)\frac\pi5\ge2\pi.
\]
Each upper face of $Q_L$ is compact, and its angle sum is continuous. Thus the pointwise strict inequality also gives $\Theta_e^+(Q_L)>2\pi$ for every edge.  The conclusion now follows from the invariant-box criterion, \Cref{thm:abstract-box}.
\end{proof}

\subsection{Examples: a locally bichromatic cyclic family}\label{subsec:bichromatic-family}

The following cyclic family, with face pairings specified in the proof, is geometric by \Cref{thm:local-bichromatic}. It uses repeated low-valence occurrences inside a tetrahedron, so the symmetric valence tables of \Cref{sec:C2} do not detect it.

\begin{proposition}\label{prop:bichromatic-family}
For every integer $d\ge1$ there is a connected orientable one-vertex ideal triangulation $\mathcal S_d$ with $3d$ tetrahedra and quotient edges $a_0,\ldots,a_{d-1},b$, with
\[
 v(a_k)=7\quad(k\in\Z/d\Z),\qquad v(b)=11d.
\]
Its unique vertex link has genus $2d$. For each $k\in\Z/d\Z$, every tetrahedron $A_k,B_k,C_k$ uses only the quotient-edge classes $a_k$ and $b$, so $\mathcal S_d$ is geometric by \Cref{thm:local-bichromatic}.
\end{proposition}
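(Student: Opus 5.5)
The plan is to build $\mathcal S_d$ as a cyclic chain of $d$ identical blocks. Each block consists of three tetrahedra $A_k,B_k,C_k$, and the blocks are glued to one another only along faces all of whose edges lie in the class $b$.

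\emph{The block.} First design a single block: three tetrahedra whose $18$ local edges are coloured so that $7$ of them are $a$-edges and $11$ are $b$-edges. Every face containing an $a$-edge will be paired with another face of the same block. I would choose the $a$-subgraphs $G_{A},G_B,G_C\subset K_4$ with $|E(G_A)|+|E(G_B)|+|E(G_C)|=7$; one natural choice is a $K_3$, a $P_3$ and a $2K_2$. The internal face pairings are then chosen so that the colours match and all $7$ $a$-edges close up into a single cycle of occurrences, which makes $v(a_k)=7$. I would leave exactly two faces of the block unpaired internally, each with all three edges coloured $b$. The unpaired face of type ``out'' in block $k$ is then glued to the unpaired face of type ``in'' in block $k+1$, indices taken in $\Z/d\Z$. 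The result is $3d$ tetrahedra in which the only classes are $a_0,\dots,a_{d-1}$ and $b$. The $a_k$ occurrences stay inside block $k$, and all $b$-occurrences link up across the cycle into one class. In the notation of \Cref{subsec:encoding}, the records for $A_k,B_k,C_k$ depend on $k$ only through the cross-block entries pointing to $k\pm1$.

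\emph{Verification.} The remaining checks are combinatorial and can be organised around the Euler characteristic.
\begin{enumerate}
\item Counting occurrences gives $7d+11d=18d=6\cdot 3d$. So once the $a_k$ are shown to be $7$-cycles, it remains only to check that the $b$-occurrences form a single class, which gives $v(b)=11d$.
\item For orientability, I would choose every face map orientation-reversing with respect to fixed orientations of the tetrahedra. This is a sign check on each permutation record.
\item For the vertex, I would check that all $4\cdot 3d$ vertex occurrences are identified to a single vertex.
\item For the link, I would run the link-checking procedure of Appendix~\ref{app:examples}. It must confirm that the truncation triangles glue into a closed connected surface, that is, the link of each vertex occurrence in the triangle complex is a circle. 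The link then has $12d$ triangles, $18d$ edges and $2(d+1)$ vertices, one for each end of each quotient edge. Hence $\chi=2d+2-18d+12d=2-4d$, and the link has genus $2d$. Since the link is an orientable closed surface of genus $2d\ge2$, the triangulation is an ideal triangulation of a compact orientable manifold whose boundary is that surface.
\end{enumerate}

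\emph{Geometricity.} By construction each of $A_k,B_k,C_k$ uses only the classes $a_k$ and $b$, so every tetrahedron is bichromatic in the sense of \Cref{def:local-bichromatic}. All valences are at least $7$: we have $v(a_k)=7$ and $v(b)=11d\ge11$. So \Cref{thm:local-bichromatic} applies and gives the unique zero-curvature hyper-ideal metric.

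\emph{Main obstacle.} The hard step is finding the block itself. The internal pairings must make the $a$-edges a single $7$-cycle and keep exactly one vertex class. The more delicate requirement is that every link vertex is a genuine disc rather than a cone on several circles, once the cyclic gluing is added. I would first search for the block with $d=1$, where the ``out'' face of the single block is glued to its own ``in'' face. I would then verify that the cross-block gluings only reroute the $b$-cycle and leave the local link structure unchanged, so that the $d=1$ checks carry over to all $d$.
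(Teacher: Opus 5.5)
Your architecture matches the paper's proof: a three-tetrahedron block in which every face meeting $a_k$ is paired internally, exactly two all-$b$ faces coupled cyclically (in the paper, $B_k:F_0\to C_{k+1}:F_1$ by \code{1302}), and the count $\chi=2(d+1)-18d+12d=2-4d$. But the proposition is an existence statement, so the block \emph{is} the proof. You leave it as the ``main obstacle'', so as written nothing has been established.

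Moreover, the one concrete choice you suggest cannot be completed. A face map identifies the three edges of one face with those of another, so it preserves the number of $a$-edges in a face. Hence for each $j\in\{0,1,2,3\}$ the faces with exactly $j$ $a$-edges can only be paired among themselves, and for $j\ge1$ only inside the block. For $K_3,P_3,2K_2$ the numbers of faces with $3,2,1,0$ $a$-edges are $1,1,9,1$, all odd. The triangular face of the $K_3$-tetrahedron has no partner, and there is only one all-$b$ face instead of the two your coupling needs. This obstruction holds for every $d$, including your $d=1$ search.

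What is missing is an admissible block together with its finite verification. The paper takes $a$-edges $01,13,23$ (a $P_4$) in $A_k$, $01,02$ in $B_k$, and $01,13$ in $C_k$. The counts for $j=3,2,1,0$ are then $0,4,6,2$. Besides the coupling above, it uses the pairings
\begin{itemize}
\item $A_k:F_0\to C_k:F_2$ (\code{2031}),
\item $A_k:F_1\to B_k:F_2$ (\code{3201}),
\item $A_k:F_2\to B_k:F_3$ (\code{2031}),
\item $A_k:F_3\to C_k:F_3$ (\code{1023}),
\item $B_k:F_1\to C_k:F_0$ (\code{3012}).
\end{itemize}
Each of your checks then becomes a direct computation.
\begin{itemize}
\item The cycle $A_k^{01}\sim B_k^{02}\sim C_k^{13}\sim A_k^{23}\sim B_k^{01}\sim A_k^{13}\sim C_k^{01}\sim A_k^{01}$ gives $v(a_k)=7$.
\item A chain of twelve $b$-occurrences runs from $A_k^{12}$ to $A_{k+1}^{12}$. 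Thus one turn around $b$ crosses the coupling with net index shift $+1$. This is what your ``rerouting'' must mean: a net shift $s$ would give $\gcd(s,d)$ classes instead of one.
\item One vertex for every $d$ follows because the internal pairings alone already identify the twelve local vertices of each block.
\item All six permutations are odd, so the tetrahedra are oriented compatibly.
\item Explicit compatible directions on the local edges show that no edge is identified with itself in reverse.
\end{itemize}
You need this last check before you can count $2(d+1)$ link vertices, one per end of each quotient edge. Once it holds, the link of each such vertex is the corresponding edge-link circle, which is connected and $2$-regular. So the ``cone on several circles'' problem you flag does not arise.
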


\begin{proof}
For each $k\in\Z/d\Z$ take tetrahedra $A_k,B_k,C_k$, each with vertices $0,1,2,3$. Figure~\ref{fig:bichromatic-coupling} shows the cyclic coupling. Use the face convention of \Cref{subsec:encoding} and the following 6 pairings, including their inverses:
\begin{center}
\begin{tabular}{ccc}
\toprule
Source face&Target face&Permutation\\
\midrule
$A_k:F_0$&$C_k:F_2$&\code{2031}\\
$A_k:F_1$&$B_k:F_2$&\code{3201}\\
$A_k:F_2$&$B_k:F_3$&\code{2031}\\
$A_k:F_3$&$C_k:F_3$&\code{1023}\\
$B_k:F_0$&$C_{k+1}:F_1$&\code{1302}\\
$B_k:F_1$&$C_k:F_0$&\code{3012}\\
\bottomrule
\end{tabular}
\end{center}
Every local face occurs in exactly one pair. The dual graph, with a vertex for each tetrahedron and an edge for each paired face, is connected. The local edges assigned to the two types are
\begin{center}
\begin{tabular}{ccc}
\toprule
Tetrahedron&$a_k$-type edges&$b$-type edges\\
\midrule
$A_k$&$01,13,23$&$02,03,12$\\
$B_k$&$01,02$&$03,12,13,23$\\
$C_k$&$01,13$&$02,03,12,23$\\
\bottomrule
\end{tabular}
\end{center}
Each face map preserves these types. The $a_k$-edges form the cycle
\[
 A_k^{01}\sim B_k^{02}\sim C_k^{13}\sim A_k^{23}
 \sim B_k^{01}\sim A_k^{13}\sim C_k^{01}\sim A_k^{01}.
\]
The only pairing changing $k$ contains no $a$-type edge. Thus each displayed cycle is one quotient-edge class $a_k$ of valence 7, and the $d$ classes are distinct.

For the other type, the face maps give
\begin{align*}
 A_k^{12}&\sim C_k^{03}\sim B_{k-1}^{12}\sim A_{k-1}^{03}
 \sim B_{k-1}^{13}\sim C_k^{23}\\
 &\sim B_k^{03}\sim A_k^{02}\sim C_k^{12}\sim B_k^{23}
 \sim C_{k+1}^{02}\sim A_{k+1}^{12}.
\end{align*}
Thus all $A_k^{12}$ lie in one class, since $k\mapsto k+1$ has one orbit on $\Z/d\Z$. As $k$ varies, these chains contain every $b$-type local edge: those with shifted subscripts occur in the neighbouring chain. There are $3+4+4=11$ per copy, so the single class $b$ has valence $11d$.

All 6 permutations are odd, so the orders $[0,1,2,3]$ orient the tetrahedra compatibly. The pairings within one copy already identify its 12 local vertices, and the cross-copy pairing identifies these vertices for consecutive $k$. Thus the quotient has one vertex. There is no reversed edge identification: compatible local orientations are
\begin{align*}
 a_k:
 &\quad A_k^{10},A_k^{13},A_k^{23},B_k^{01},B_k^{02},C_k^{01},C_k^{31},\\
 b:
 &\quad A_k^{02},A_k^{03},A_k^{12},B_k^{30},B_k^{21},B_k^{31},B_k^{23},
 C_k^{02},C_k^{03},C_k^{12},C_k^{23}.
\end{align*}
Each face map preserves these directed edges, so no edge is identified with itself in reverse. The cycles above show that every edge link is a circle. Let $S$ be the vertex link obtained by gluing the truncation triangles. Each vertex of $S$ corresponds to one end of an oriented quotient edge. Its corner-link intervals are paired by the same face maps as the transverse intervals around that edge, so its link is the corresponding edge-link circle. Hence $S$ is a closed surface. The face identifications connecting all local vertices induce side-adjacency chains connecting all truncation triangles, so $S$ is connected. The compatible tetrahedral orientations induce an orientation on $S$. Its cell counts are $2(d+1),18d,12d$, so its Euler characteristic is $2-4d$, and its genus is $2d$. The hypotheses of \Cref{thm:local-bichromatic} now hold.
\end{proof}

\begin{figure}[!htbp]
\centering
\includegraphics[width=.95\textwidth]{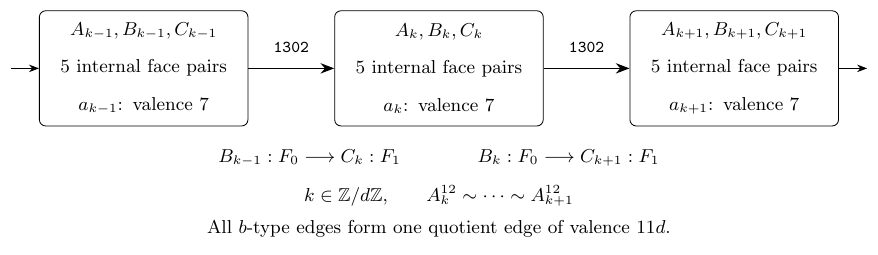}
\caption{The cyclic coupling for $\mathcal S_d$. Each box contains $A_k,B_k,C_k$. Only the displayed face pairing changes the copy index. The low-valence class $a_k$ stays in its box, whereas the chain through $b$-type edges connects all boxes.}
\label{fig:bichromatic-coupling}
\end{figure}

\begin{corollary}\label{cor:bichromatic-family}
The compact manifolds associated with $\mathcal S_d$ are pairwise nonhomeomorphic. Their triangulations are geometric, but satisfy neither of the symmetric $C=2$ valence criteria in Tables~\ref{tab:C2-simple} and~\ref{tab:C2-refined}.
\end{corollary}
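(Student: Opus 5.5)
The corollary has three claims: the manifolds are pairwise nonhomeomorphic, the triangulations are geometric, and they fail both symmetric $C=2$ tables. I will treat them in that order.

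\emph{Pairwise nonhomeomorphic.} By \Cref{prop:bichromatic-family}, $\mathcal S_d$ is a one-vertex ideal triangulation whose vertex link is a closed orientable surface of genus $2d$. The compact manifold $N_d$ is obtained by truncating the vertex. Its boundary $\partial N_d$ is therefore that link, a single connected surface of genus $2d$. The genus of the boundary is a homeomorphism invariant, and more crudely so is $\chi(\partial N_d)=2-4d$. Hence $N_d\not\cong N_{d'}$ for $d\ne d'$.

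One could also use $\chi(N_d)=\tfrac12\chi(\partial N_d)=1-2d$. This can be checked directly from the cell count of the truncated triangulation, which has $3d$ tetrahedra, $6d$ faces, $d+1$ edges and no interior vertices.

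\emph{Geometric.} This is exactly the conclusion of \Cref{prop:bichromatic-family} via \Cref{thm:local-bichromatic}. The required hypotheses are $v\ge7$ everywhere, which holds because $7$ and $11d\ge11$ are both at least $7$, and bichromaticity of every tetrahedron. By \Cref{prop:geometric-realization}, the resulting zero-curvature metric gives a hyperbolic structure with totally geodesic boundary on $N_d$.

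\emph{Failure of the $C=2$ tables.} This is the only step requiring care, and I expect it to be the main obstacle, because Tables~\ref{tab:C2-simple} and~\ref{tab:C2-refined} appear only later in \Cref{sec:C2}. My plan is to exhibit a failing entry at a valence-$7$ edge $a_k$. The symmetric criteria presumably require that, for each edge of valence $7$ (or $7,8,9$), every neighbouring edge in its star has valence at least some threshold, or that the star meets a valence-table condition. Here a single tetrahedron such as $A_k$ contains three occurrences of $a_k$: the edges $01,13,23$. Therefore the edge star of $a_k$ contains $a_k$ itself as a neighbouring edge of valence $7$, either as an adjacent or as an opposite edge.

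I would read off from the tables the minimal neighbour valence allowed next to a valence-$7$ edge. I expect any valence-$7$ row there to demand neighbours of much higher valence, or to forbid valence $7$ entirely when $C=2$. Then the occurrence of $a_k$ adjacent to itself, with valence $7$, violates both tables for every $d$. If instead the tables involve opposite edges, the same conclusion follows from the opposite pair $01$ and $23$ in $A_k$, both of type $a_k$.

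It therefore remains only to match this against the precise table entries. Either the row for $v=7$ is absent, or it demands a neighbour valence larger than $7$.
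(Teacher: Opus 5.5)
Your outline matches the paper's proof. The boundary genera $2d$ distinguish the manifolds, and geometricity is \Cref{prop:bichromatic-family} via \Cref{thm:local-bichromatic}. Failure of the tables is witnessed by the occurrence $A_k^{01}$ of $a_k$: its adjacent edge $13$ and its opposite edge $23$ also represent $a_k$. However, you stop before the one check that actually constitutes the third claim, namely the comparison with the table entries, which you leave as ``it remains only to match''.

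That check is short. For target valence $7$, Table~\ref{tab:C2-simple} requires all four adjacent valences to be at least $14$. Table~\ref{tab:C2-refined}, with opposite valence $m=7$, requires at least $12$, and in fact every valence-$7$ row there requires at least $12$. At $A_k^{01}$ the adjacent minimum is $M=7$, because of the edge $13$, so no row of either table holds. Since \Cref{cor:C2-explicit} requires every occurrence to satisfy some row, both criteria fail for every $d$.

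Your fallback sentence is wrong: the opposite pair $01,23$ on its own could never make the tables fail. In Table~\ref{tab:C2-refined} a smaller opposite valence only lowers the required adjacent threshold. This is because $\gamma_m$ decreases in $m$, so $A_2(M,m)$ is decreasing in $m$. Opposite valence $7$ is therefore among the most favourable cases. The failure comes entirely from the adjacent valence-$7$ edge, and that is the observation your argument must rest on.
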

\begin{proof}
Their connected boundary genera $2d$ differ. In $A_k$, the target $01$ and its adjacent edge $13$ both represent $a_k$, and the opposite edge $23$ also represents $a_k$. Hence the target valence is 7, the opposite valence is 7, and an adjacent valence is 7. The required adjacent thresholds in Tables~\ref{tab:C2-simple} and~\ref{tab:C2-refined} are 14 and 12, respectively. Thus neither table applies.
\end{proof}

\section{The classical value \texorpdfstring{$C=2$}{C=2}}\label{sec:C2}

This section uses Zhao's $C=2$ conditional estimates to obtain explicit conditions on neighbouring valences and construct triangulations satisfying them.

\subsection{Conditional bounds and the edge-star theorem}\label{subsec:C2-theorem}

The conditional estimates in \Cref{lem:C2-conditional} are the preparatory bounds for \Cref{thm:C2-star}.

Zhao's monotonicity lemma \cite[Lemma~3.4]{Zhao} gives, on $[1,2]^6$,
\begin{equation}\label{eq:Zhao-monotonicity}
 \frac{\partial\varphi}{\partial x_j}\ge0
 \quad(j=2,3,5,6),
 \qquad
 \frac{\partial\varphi}{\partial x_4}\le0.
\end{equation}
Set
\begin{equation*}
 \gamma_n=\frac{6}{2+\cos(2\pi/n)}-1\quad(n\ge6),
 \qquad
 b_n=\frac{16}{1+\cos(2\pi/n)}-7\quad(n\ge10),
\end{equation*}
and put $b_n=2$ for $6\le n\le9$. The displayed formulas are from \cite[Eqs.~(3.3)--(3.4)]{Zhao}. The defining endpoint equations are
\[
 \varphi(\gamma_n,1,1,2,1,1)=\cos\frac{2\pi}{n}\quad(n\ge6),
 \qquad
 \varphi(b_n,2,2,1,2,2)=\cos\frac{2\pi}{n}\quad(n\ge10).
\]
The initial intervals are nonempty: $1<\gamma_n<b_n\le2$ for $n\ge6$. Indeed, $c_n:=\cos(2\pi/n)\in[1/2,1)$ gives $1<\gamma_n\le7/5<2$. For $n\ge10$, $c_n\ge c_{10}=(1+\sqrt5)/4>7/9$ implies $b_n<2$, while $b_n-\gamma_n=2(1-c_n)(7+3c_n)/((1+c_n)(2+c_n))>0$.

\begin{lemma}\label{lem:C2-conditional}
Assume $v(f)\ge6$ for every quotient edge $f\in E$. Let $l(t)$ solve \eqref{eq:flow}, put $x_f(t)=\cosh l_f(t)$, and fix $t_0>0$. Suppose
\[
 x_f(t)\le2\quad\text{for every }f\in E\text{ and every }t\in[0,t_0],
\]
and
\[
 \gamma_{v(f)}<x_f(0)<b_{v(f)}\quad\text{for every }f\in E.
\]
Then, for every $t\in[0,t_0]$,
\[
 x_f(t)\ge\gamma_{v(f)}\quad\text{for every }f\in E,
 \qquad
 x_f(t)\le b_{v(f)}<2\quad\text{for every }f\in E\text{ with }v(f)\ge10.
\]
\end{lemma}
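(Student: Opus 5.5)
The argument is a first-contact barrier argument run inside the a priori cube $[1,2]^E$. Suppose one of the two conclusions fails on $[0,t_0]$. Let $\tau$ be the infimum of the times at which some $x_f(t)<\gamma_{v(f)}$, or some $x_f(t)>b_{v(f)}$ with $v(f)\ge10$. The initial inequalities are strict and the flow is continuous, so $\tau>0$. On $[0,\tau]$ all the bounds hold, and at $t=\tau$ some edge $e$ sits on a face, meaning $x_e(\tau)=\gamma_{v(e)}$ or $x_e(\tau)=b_{v(e)}$. In the lower case we have $\dot x_e(\tau)\le0$ from the first-contact property. More precisely, the bound holds before $\tau$ and is violated arbitrarily soon after it, so we consider instead the open condition obtained by a strict sign of $\Ktil_e$ near the face. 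In the upper case we similarly have $\dot x_e(\tau)\ge 0$. By \eqref{eq:xflow} it therefore suffices to prove two angle-sum inequalities at $\tau$. The first is $\sum\alpha(\widehat e)<2\pi$ when $x_e=\gamma_{v(e)}$. The second is $\sum\alpha(\widehat e)>2\pi$ when $x_e=b_{v(e)}$ and $v(e)\ge10$. These hold uniformly over all other coordinates in $[1,2]$, and in fact we need less: the lower bounds $x_f\ge\gamma_{v(f)}\ge1$ and $x_f\le 2$.

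For the lower face, fix an occurrence $\widehat e$ of $e$ and order its tetrahedron with $\widehat e=e_1$. All coordinates lie in $[1,2]^6$, so the monotonicity \eqref{eq:Zhao-monotonicity} applies. Decreasing $x_2,x_3,x_5,x_6$ to $1$ and increasing $x_4$ to $2$ can only decrease $\varphi$. Hence
\[
\varphi\ \ge\ \varphi(\gamma_n,1,1,2,1,1)=\cos\tfrac{2\pi}{n},\qquad n=v(e).
\]
This gives $\alpha(\widehat e)\le 2\pi/n$, since $\arccos\circ\clamp$ is nonincreasing. Summing over the $n$ occurrences yields $\sum\alpha(\widehat e)\le2\pi$, so $\Ktil_e\ge0$ and $\dot x_e\ge0$.

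For the upper face, with $n\ge10$ and $x_e=b_n$, the inequalities reverse. Setting $x_2,x_3,x_5,x_6=2$ and $x_4=1$ gives $\varphi\le\varphi(b_n,2,2,1,2,2)=\cos(2\pi/n)$. By \eqref{eq:angle-lower-principle}, $\alpha(\widehat e)\ge2\pi/n$, so $\sum\alpha\ge2\pi$ and $\dot x_e\le0$.

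The main obstacle is that these bounds are non-strict: equality is attained at the extremal configuration, so a naive first-contact argument only gives $\dot x_e=0$ and no contradiction. I would handle this in one of two ways. The first is to perturb the thresholds, running the argument with $\gamma_n-\varepsilon$ and $b_n+\varepsilon$, whose strict versions follow because $\varphi(\cdot,1,1,2,1,1)$ is strictly increasing in $x_1$ near $\gamma_n$ (and analogously at $b_n$), and then letting $\varepsilon\to0$. The second is a comparison argument on $g(t)=\min_f\bigl(x_f(t)-\gamma_{v(f)}\bigr)$: show via the upper-Dini derivative that $g$ cannot become negative, since $\Ktil_e\ge0$ whenever $x_e=\gamma_{v(e)}$ and all other coordinates lie in the admissible region. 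I expect the perturbation route to be cleaner. It needs strict monotonicity of $\varphi$ in $x_1$ at the extremal configuration, which I would check by direct differentiation of \eqref{eq:phi}. Finally, the inequality $b_n<2$ for $n\ge10$ was already verified before the statement.
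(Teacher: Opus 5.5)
Your argument is correct and is essentially the paper's: the paper (following Zhao's Propositions~3.5--3.6) runs the first-crossing argument at a level $a$ strictly beyond the barrier, which is your $\varepsilon$-perturbation. Like you, it uses only $x_f\in[1,2]$ for the other edges, so no simultaneous first contact is needed. One slip: you describe $\varphi(\cdot,1,1,2,1,1)$ as strictly increasing in $x_1$, but $\varphi(x,1,1,2,1,1)=(4-2x)/(x+1)$ and $\varphi(x,2,2,1,2,2)=(9-x)/(7+x)$ are both strictly \emph{decreasing} in $x$. Decreasing is exactly the direction that makes the shifted barriers $\gamma_n-\varepsilon$ and $b_n+\varepsilon$ give strict angle-sum inequalities.
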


\begin{proof}
The lower bound is \cite[Proposition~3.6]{Zhao}. For the upper bound, the proof of \cite[Proposition~3.5]{Zhao} applies edge by edge: at a first upward crossing of a level $b_n<a<2$ for a target of valence $n\ge10$, \eqref{eq:Zhao-monotonicity} gives every incident cosine at most
\[
 \varphi(a,2,2,1,2,2)<\varphi(b_n,2,2,1,2,2)=\cos(2\pi/n).
\]
Thus $\Ktil_e<0$, contradicting upward contact. This argument uses no valence restriction on the other edges. For valences 6--9 the upper bound 2 is already assumed.
\end{proof}

For a local occurrence $\widehat e=e_1$ with opposite edge $e_4$, define
\begin{equation}\label{eq:C2-occurrence}
 \underline\alpha_2(\widehat e)=
 \arccos\!\left(\clamp_{[-1,1]}
 \varphi\bigl(2,b_{v(e_2)},b_{v(e_3)},\gamma_{v(e_4)},
 b_{v(e_5)},b_{v(e_6)}\bigr)\right).
\end{equation}

\begin{theorem}\label{thm:C2-star}
Assume $v(e)\ge6$ for every quotient edge $e\in E$. If, for every $e\in E$ with $v(e)\in\{6,7,8,9\}$,
\[
 \sum_{\widehat e\in\PE^{-1}(e)}\underline\alpha_2(\widehat e)>2\pi,
\]
then $(M,\mathcal T)$ is geometric.  The zero-curvature metric is unique and the extended combinatorial Ricci flow converges to it exponentially from any initial vector $l(0)\in\R_{>0}^E$.
\end{theorem}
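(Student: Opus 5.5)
We plan to apply the invariant-box criterion, \Cref{thm:abstract-box}, to the box
\[
 Q=\prod_{e\in E}[\gamma_{v(e)},b_{v(e)}],
\]
with $b_n=2$ for $6\le n\le9$. This box lies in $(1,2]^E\subset(1,3]^E$, and each factor is nondegenerate because $1<\gamma_n<b_n\le2$. It then suffices to check two face inequalities for every quotient edge $e$: $\Theta_e^+(Q)>2\pi$ and $\Theta_e^-(Q)<2\pi$. Uniqueness, genuineness and exponential convergence from arbitrary $l(0)$ then follow from \Cref{thm:abstract-box}.

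\emph{Upper faces.} Fix $x\in Q$ with $x_e=b_{v(e)}$. Take an occurrence $\widehat e=e_1$ with the edge orientation of \eqref{eq:phi}. Since $Q\subset[1,2]^6$ locally, Zhao's monotonicity \eqref{eq:Zhao-monotonicity} lets us raise $x_2,x_3,x_5,x_6$ to their upper endpoints $b_{v(e_j)}$ and lower $x_4$ to $\gamma_{v(e_4)}$, each step only increasing $\varphi$. The bound $x_i\ge1$ on the coordinates, together with the interval $[1,2]^6$ in the lemma, justifies this path.

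For $v(e)\in\{6,7,8,9\}$ we have $x_e=2$, so $\varphi(x)\le\varphi(2,b_{v(e_2)},b_{v(e_3)},\gamma_{v(e_4)},b_{v(e_5)},b_{v(e_6)})$. By \eqref{eq:angle-lower-principle}, $\alpha(\widehat e;x)\ge\underline\alpha_2(\widehat e)$. Summing over occurrences and using the hypothesis gives a sum $>2\pi$ pointwise. Compactness of the face and continuity then give $\Theta_e^+>2\pi$.

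For $v(e)=n\ge10$ with $x_e=b_n$, we instead bound $\varphi(x)\le\varphi(b_n,2,2,1,2,2)=\cos(2\pi/n)$. Strictness is the delicate point: this only gives angle sum $\ge2\pi$. So we shrink slightly, using an upper endpoint $b_n'$ with $b_n<b_n'<2$ close to $b_n$. The strict inequality $\varphi(b_n',2,2,1,2,2)<\cos(2\pi/n)$ then follows from the strict monotonicity used in \Cref{lem:C2-conditional}. We must also check that the hypothesis at valence 6--9 survives replacing $b$ by $b'$. Because the hypothesis is a strict inequality and $\varphi$ is continuous, a small enough perturbation preserves it. Alternatively, we can use $\varphi\ge$ and $\alpha$ at interior lengths together with \Cref{lem:C2-conditional}, which supplies exactly this barrier argument.

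\emph{Lower faces.} With $x_e=\gamma_n$, we push the other coordinates in the opposite direction: $x_2,x_3,x_5,x_6$ down to $1$ and $x_4$ up to $2$. This gives $\varphi(x)\ge\varphi(\gamma_n,1,1,2,1,1)=\cos(2\pi/n)$, so the angle sum is $\le2\pi$. Strictness again needs a slight perturbation of $\gamma_n$ to $\gamma_n'$ with $\gamma_n<\gamma_n'<b_n$. This is Zhao's lower bound mechanism, \Cref{lem:C2-conditional}.

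\emph{Main obstacle.} The main obstacle is this strictness/perturbation bookkeeping. If it fails, the fallback is to argue directly with flows rather than \Cref{thm:abstract-box}. Start from $x(0)$ in the interior of $Q$. Show by a first-exit argument that $x_f\le2$ persists for valence 6--9 edges, using the hypothesis together with the bounds of \Cref{lem:C2-conditional} on $[0,t_0]$; for edges of valence $\ge10$ this is already given by \Cref{lem:C2-conditional}. Then apply \Cref{prop:compactness} and the small-length criterion as in the proof of \Cref{thm:abstract-box}.
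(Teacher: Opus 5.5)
Your fallback is essentially the paper's proof. The paper takes $T$ to be the first time any edge reaches $x=2$ and applies \Cref{lem:C2-conditional} on $[0,T]$. That lemma assumes $x_f\le2$ for all $f$, so the valence-$\ge10$ bound $x_f\le b_{v(f)}<2$ comes from this choice of $T$; it is not available beforehand, as your sketch suggests. The first-contact edge therefore has valence 6--9, and the hypothesis gives the contradiction.

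Your main route, applying \Cref{thm:abstract-box} to a static box, is a genuinely different and workable strategy, but the lower-face step fails as written. Since
\[
 \varphi(a,1,1,2,1,1)=\frac{4-2a}{a+1}
\]
is strictly decreasing in $a$, raising the lower endpoint to $\gamma_n'>\gamma_n$ gives only $\varphi\ge\varphi(\gamma_n',1,1,2,1,1)$, and this value is \emph{below} $\cos(2\pi/n)$. That permits angles larger than $2\pi/n$ and gives no bound $\Theta_e^-<2\pi$. You must lower the endpoint instead: take $1<\gamma_n'<\gamma_n$. Then $\varphi(\gamma_n',1,1,2,1,1)>\cos(2\pi/n)$, so every angle on that face is uniformly below $2\pi/n$ and the sum is uniformly below $2\pi$. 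This enlarges the box downward, so you must recheck the valence-6--9 upper faces. There $\underline\alpha_2$, evaluated with the smaller opposite bounds $\gamma'_{v(e_4)}$, decreases. Since the hypothesis is strict and $E$ is finite, continuity keeps every star sum above $2\pi$ for a small enough perturbation, exactly as you argued for $b'$.

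Your upper-face perturbation for valence $n\ge10$ goes in the right direction but is unnecessary. On $Q$ the opposite coordinate satisfies $x_4\ge\min_f\gamma_{v(f)}>1$. In \eqref{eq:phi} the numerator is affine in $x_4$ with slope $1-x_1^2<0$, and the denominator does not involve $x_4$. Hence, uniformly on the face,
\[
 \varphi(x)\le\varphi(b_n,2,2,x_4,2,2)<\varphi(b_n,2,2,1,2,2)=\cos(2\pi/n).
\]

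Compared with the paper: the first-crossing argument gets strictness for free, because leaving a barrier means reaching a level strictly beyond it, where the endpoint comparisons are strict. It also keeps the trajectory in the unperturbed box $\prod_e[\gamma_{v(e)},b_{v(e)}]$. Your corrected static version produces an explicit, slightly enlarged forward-invariant box and reduces the theorem entirely to \Cref{thm:abstract-box}, at the cost of this perturbation bookkeeping.
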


\begin{proof}
Following the first-contact argument in \cite[proof of Theorem~5.1]{Zhao}, choose $x(0)\in\prod_{e\in E}(\gamma_{v(e)},b_{v(e)})$ and set $l_e(0)=\arccosh x_e(0)$ for every $e\in E$.  Suppose that some edge first reaches $x=2$ at time $T$.  On $[0,T]$, \Cref{lem:C2-conditional} applies.  An edge of valence at least 10 cannot be the first edge to reach $2$ because it satisfies $x_e\le b_{v(e)}<2$.  For a first-contact edge of valence at most 9, \eqref{eq:Zhao-monotonicity} and the conditional bounds imply that every actual occurrence angle is at least \eqref{eq:C2-occurrence}; hence its total angle is larger than $2\pi$.  Thus $\Ktil_e<0$ and $\dot x_e(T)<0$, a contradiction.

Therefore $x_e(t)<2$ for every $e\in E$ and every $t\ge0$.  Applying the lower part of \Cref{lem:C2-conditional} on every finite interval gives a uniform positive lower bound.  The flow remains in a compact box, so \Cref{prop:compactness} and the $\arccosh3$ criterion complete the proof.
\end{proof}

\subsection{Valence-only corollaries}\label{subsec:C2-corollaries}

The next estimate follows from \eqref{eq:Zhao-monotonicity}. It replaces individual adjacent-edge bounds by a common valence threshold and yields \Cref{cor:C2-explicit,cor:C2-v8}.

For $M\ge6$ and $m\in\{6,7,\ldots\}\cup\{\infty\}$, put $\gamma_\infty=1$ and define
\begin{equation}\label{eq:A2Mm}
 A_2(M,m)=\arccos\!\left(\frac{2b_M^2-\gamma_m}{2b_M^2+1}\right).
\end{equation}
Here $m=\infty$ means that the opposite-edge valence is not used, so only $x_4\ge1$ is retained. The argument of $\arccos$ lies in $(-1,1)$: $b_M\ge1$, $1\le\gamma_m\le7/5$, and the denominator is positive. Notice also that
\[
 A_2(6,m)=A_2(7,m)=A_2(8,m)=A_2(9,m),
\]
since $b_6=b_7=b_8=b_9=2$.

\begin{lemma}\label{lem:C2-symmetric}
Let $n_i\ge6$ be the valence of the quotient edge represented by the local edge $e_i$. Suppose $x_1=2$ and $\gamma_{n_i}\le x_i\le b_{n_i}$ for $i=2,\ldots,6$. If $n_4=m$ and $n_i\ge M$ for $i=2,3,5,6$, then the target angle satisfies
\[
 \alpha_1\ge A_2(M,m).
\]
The same conclusion with $A_2(M,\infty)$ holds without opposite-edge valence information.
\end{lemma}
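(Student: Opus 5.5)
The plan is to bound the cosine $\varphi$ of the target angle from above by monotonicity, and then convert that bound into the angle bound via \eqref{eq:angle-lower-principle}. All coordinates lie in $[1,2]$: we have $x_1=2$, and $1<\gamma_{n_i}\le x_i\le b_{n_i}\le2$ by the remarks after the definitions of $\gamma_n,b_n$. So the whole configuration lies in the cube $[1,2]^6$, where Zhao's monotonicity \eqref{eq:Zhao-monotonicity} holds.

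\textbf{Step 1 (monotone replacement).} Since $[1,2]^6$ is convex, move from $x$ to the point $\bigl(2,b_{n_2},b_{n_3},\gamma_{n_4},b_{n_5},b_{n_6}\bigr)$ along straight segments, one coordinate at a time, staying in the cube. Along each segment, \eqref{eq:Zhao-monotonicity} shows that $\varphi$ does not decrease when an adjacent coordinate $x_j$ ($j=2,3,5,6$) is raised to $b_{n_j}$, or when $x_4$ is lowered to $\gamma_{n_4}=\gamma_m$. This gives $\varphi(x)\le\varphi\bigl(2,b_{n_2},b_{n_3},\gamma_m,b_{n_5},b_{n_6}\bigr)$. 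Without opposite-edge information, lower $x_4$ to $1$ instead, which is the convention $\gamma_\infty=1$.

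\textbf{Step 2 (uniformize the adjacent bounds).} Check that $n\mapsto b_n$ is nonincreasing on $n\ge6$. It equals $2$ for $6\le n\le9$; for $n\ge10$ the formula $16/(1+c_n)-7$ decreases because $c_n$ increases; and $b_{10}<2$ was shown above. Hence $n_i\ge M$ gives $b_{n_i}\le b_M\le2$, and a second application of the monotonicity in the adjacent coordinates yields
\[
 \varphi(x)\le\varphi(2,B,B,\gamma_m,B,B),\qquad B:=b_M .
\]

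\textbf{Step 3 (evaluation).} Substituting into \eqref{eq:phi}, the numerator is $B^2+B^2+2B^2+2B^2-4\gamma_m+\gamma_m=6B^2-3\gamma_m$. Each radicand equals $4B^2+4+2B^2-1=6B^2+3$, so the denominator is $6B^2+3$. The quotient is $(2B^2-\gamma_m)/(2B^2+1)$, which lies in $(-1,1)$ as already noted after \eqref{eq:A2Mm}. Applying \eqref{eq:angle-lower-principle} then gives $\alpha_1\ge A_2(M,m)$, and the same computation with $\gamma_\infty=1$ gives the bound $A_2(M,\infty)$.

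The only delicate point is Step 1. Monotonicity is only known inside $[1,2]^6$, so the interpolation path must remain in that cube. This is why the inequalities $b_n\le2$ and $\gamma_n\ge1$ are needed, and both are already established above. The rest is routine algebra.
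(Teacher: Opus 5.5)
Your proof is correct and follows the same route as the paper: Zhao's monotonicity on $[1,2]^6$ together with the fact that $n\mapsto b_n$ is nonincreasing reduces to the symmetric point $(2,b_M,b_M,\gamma_m,b_M,b_M)$, whose cosine simplifies to $(2b_M^2-\gamma_m)/(2b_M^2+1)\in(-1,1)$. Your check that the interpolation path stays in the cube, and your two-stage replacement (first to $b_{n_i}$, then to $b_M$), only spell out what the paper states in one line.
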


\begin{proof}
Since $b_n$ is nonincreasing in $n$, the four adjacent cosh-lengths are at most $b_M$, while $x_4\ge\gamma_m$. By \eqref{eq:Zhao-monotonicity},
\[
 \cos\alpha_1\le\varphi(2,b_M,b_M,\gamma_m,b_M,b_M).
\]
A direct simplification gives
\[
 \varphi(2,p,p,\delta,p,p)=\frac{2p^2-\delta}{2p^2+1}.
\]
Since $\arccos$ is decreasing, the definition \eqref{eq:A2Mm} gives $\alpha_1\ge A_2(M,m)$.
\end{proof}

Tables~\ref{tab:C2-simple} and~\ref{tab:C2-refined} are valence-only consequences of \Cref{lem:C2-symmetric}. Their column headed $M$ specifies a lower bound on all 4 adjacent quotient-edge valences. No additional restriction means $M=6$, the global minimum in this section.

\begin{table}[htbp]
\centering
\caption{Simple uniform sufficient conditions at $C=2$.}
\label{tab:C2-simple}
\setlength{\tabcolsep}{5pt}
\begin{tabular}{ccc}
\toprule
Target valence $n$ & Required adjacent valence $M$ & Guaranteed angle at each occurrence\\
\midrule
$6$ & $\ge19$ & $>\pi/3$\\
$7$ & $\ge14$ & $>2\pi/7$\\
$8$ & $\ge11$ & $>\pi/4$\\
$9$ & $\ge10$ & $>2\pi/9$\\
\bottomrule
\end{tabular}
\end{table}

\begin{table}[htbp]
\centering
\caption{Refined $C=2$ conditions using the opposite-edge valence.}
\label{tab:C2-refined}
\begin{tabular}{ccc}
\toprule
Target valence $n$ & Opposite-edge valence $m$ & Required adjacent valence $M$\\
\midrule
$6$ & $m=6$ & $\ge15$\\
$6$ & $m=7$ & $\ge16$\\
$6$ & $m=8,9$ & $\ge17$\\
$6$ & $10\le m\le15$ & $\ge18$\\
$6$ & $m\ge16$ & $\ge19$\\
$7$ & $m=6,7$ & $\ge12$\\
$7$ & $8\le m\le18$ & $\ge13$\\
$7$ & $m\ge19$ & $\ge14$\\
$8$ & $m=6,7$ & $\ge10$\\
$8$ & $m\ge8$ & $\ge11$\\
$9$ & $6\le m\le11$ & no additional restriction\\
$9$ & $m\ge12$ & $\ge10$\\
\bottomrule
\end{tabular}
\end{table}

\begin{corollary}\label{cor:C2-explicit}
Assume $v(e)\ge6$ for every edge $e\in E$. If every local occurrence of every edge of valence $6,7,8$, or $9$ satisfies either the corresponding row of Table~\ref{tab:C2-simple} or the applicable row of Table~\ref{tab:C2-refined}, then $(M,\mathcal T)$ is geometric.
\end{corollary}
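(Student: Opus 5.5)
The plan is to reduce \Cref{cor:C2-explicit} to \Cref{thm:C2-star} and to use \Cref{lem:C2-symmetric} to compare the occurrence bounds with the table thresholds. Fix an edge $e$ with $n=v(e)\in\{6,7,8,9\}$ and a local occurrence $\widehat e=e_1$, with adjacent edges $e_2,e_3,e_5,e_6$ of valences $n_2,n_3,n_5,n_6$ and opposite edge $e_4$ of valence $m$. The quantity $\underline\alpha_2(\widehat e)$ in \eqref{eq:C2-occurrence} is $\arccos$ of $\varphi$ evaluated at $(2,b_{n_2},b_{n_3},\gamma_m,b_{n_5},b_{n_6})$. By \eqref{eq:Zhao-monotonicity}, applied on $[1,2]^6$ (all these entries lie there, since $1<\gamma_m\le 7/5$ and $b_k\le2$), $\varphi$ is nondecreasing in the adjacent slots and nonincreasing in the opposite slot. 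If every $n_i\ge M$, then $b_{n_i}\le b_M$. Hence $\underline\alpha_2(\widehat e)\ge A_2(M,m)$, and also $\underline\alpha_2(\widehat e)\ge A_2(M,\infty)$, because $\gamma_m\ge1$. This is exactly the computation in \Cref{lem:C2-symmetric}, now performed at the endpoint values rather than along the flow.

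It therefore suffices to check that each table row guarantees $A_2(M,m)>2\pi/n$. Then every occurrence contributes strictly more than $2\pi/n$, the $n$ occurrences sum to more than $2\pi$, and \Cref{thm:C2-star} gives geometricity, uniqueness and convergence. Since rows may be mixed across occurrences, the per-occurrence bound is what matters, and it holds for each occurrence separately.

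The verification is elementary. $A_2(M,m)$ is increasing in $M$, because $b_M$ is nonincreasing and $(2p^2-\delta)/(2p^2+1)$ is increasing in $p$. It is also increasing in $m$, through $\gamma_m$ nonincreasing in $m$ and the numerator being decreasing in $\delta$. So for each row it is enough to check the worst corner. That corner is the threshold $M$ together with the largest $m$ in the row's range. For unbounded ranges ($m\ge16$, $m\ge19$, $m\ge8$, $m\ge12$, and the simple table) the worst case is $m=\infty$, i.e.\ $\gamma=1$. For rows with $M\le9$ we have $b_M=2$. For example, the row $n=9$, $6\le m\le11$ requires $(8-\gamma_{11})/9<\cos(2\pi/9)$.

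The condition $A_2(M,m)>2\pi/n$ is equivalent to
\[
 \frac{2b_M^2-\gamma_m}{2b_M^2+1}<\cos\frac{2\pi}{n}.
\]
Substituting the closed forms of $b_M$ and $\gamma_m$ makes each check a finite inequality between algebraic numbers (cosines of rational multiples of $\pi$). I would certify these by rigorous interval arithmetic at the roughly sixteen worst corners. The main obstacle is precisely this bookkeeping. The table thresholds are presumably sharp, so some margins will be small and need careful evaluation. In addition, the monotonicity in $m$ must be justified over each whole range, not just at its endpoints, including $m\to\infty$ via $\gamma_\infty=1$.
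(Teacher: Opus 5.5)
Your proposal matches the paper's proof: you bound each occurrence angle below by $A_2(M,m)$ (or $A_2(M,\infty)$) via \eqref{eq:Zhao-monotonicity}, reduce each table row to its worst corner, certify $A_2(M,m)>2\pi/n$ there by interval arithmetic, and then apply \Cref{thm:C2-star}. One slip: $A_2(M,m)$ is \emph{decreasing} in $m$, not increasing, because as $m$ grows $\gamma_m$ decreases and $(2b_M^2-\gamma_m)/(2b_M^2+1)$ increases; this direction is what justifies your otherwise correct choice of the largest $m$, or $m=\infty$, as the worst corner.
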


\begin{proof}
Every listed row gives $A_2(M,m)>2\pi/n$ for the target valence $n$.  Thus every occurrence angle is larger than $2\pi/n$, and \Cref{thm:C2-star} applies.  The strict inequalities were checked with outward-rounded interval arithmetic; the monotonicity of $b_M$ in $M$ and of $\gamma_m$ in $m$ reduces each displayed range to its worst endpoint.  See Appendix~\ref{app:numerics}.
\end{proof}

\begin{corollary}\label{cor:C2-v8}
Assume $v(f)\ge8$ for every edge $f\in E$.  Suppose that each local occurrence satisfies the following conditions.
\begin{enumerate}[label=(\roman*)]
\item If the target edge has valence 8, all 4 adjacent edges have valence at least 11.
\item If the target edge has valence 9 and the opposite edge has valence between 8 and 11, no further adjacent-edge condition is imposed.
\item If the target edge has valence 9 and the opposite edge has valence at least 12, all 4 adjacent edges have valence at least 10.
\end{enumerate}
Then $(M,\mathcal T)$ is geometric.
\end{corollary}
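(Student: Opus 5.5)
The plan is to deduce this from \Cref{cor:C2-explicit}, that is, from \Cref{thm:C2-star} combined with \Cref{lem:C2-symmetric}. Valences 6 and 7 do not occur, since the global minimum is now 8. So \Cref{thm:C2-star} only requires the strict angle-sum inequality at edges of valence 8 and 9. It suffices to show that each occurrence angle $\underline\alpha_2(\widehat e)$ at such an edge exceeds $2\pi/n$, where $n=v(e)$.

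I will check the three cases against the rows of Tables~\ref{tab:C2-simple} and~\ref{tab:C2-refined}.
\begin{itemize}
\item \emph{Case (i).} The target valence is 8 and all adjacent valences are at least 11. This is the row $n=8$, $M\ge11$ of Table~\ref{tab:C2-simple}, which needs no opposite-edge information. It gives $A_2(11,\infty)>\pi/4$. Alternatively, one can use the refined row $n=8$, $m\ge8$, $M\ge11$.
\item \emph{Case (iii).} The target valence is 9, the opposite valence is at least 12, and all adjacent valences are at least 10. This is exactly the refined row $n=9$, $m\ge12$, $M\ge10$. It is also covered by the simple row $n=9$, $M\ge10$.
\item \emph{Case (ii).} The target valence is 9 and the opposite valence $m$ lies between 8 and 11. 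This falls in the refined row $n=9$, $6\le m\le11$, with no additional restriction. That row means $M=6$. It is legitimate here because every adjacent valence is at least $8\ge6$, and $b_M$ is nonincreasing in $M$.
\end{itemize}

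In every case \Cref{lem:C2-symmetric} gives $\alpha\ge A_2(M,m)>2\pi/n$. This applies because the hypotheses $\gamma_{n_i}\le x_i\le b_{n_i}$ are exactly those used in \Cref{thm:C2-star}. Summing over the $n$ occurrences gives a total larger than $2\pi$.

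The one point needing care is case (ii) with the smallest allowed parameters. The worst endpoint is $A_2(6,11)=A_2(9,11)$, since $b_6=\cdots=b_9=2$. Here one needs
\[
 \arccos\frac{8-\gamma_{11}}{9}>\frac{2\pi}{9},
\]
which is equivalent to $\gamma_{11}>8-9\cos(2\pi/9)$. This is already among the interval-verified entries cited for \Cref{cor:C2-explicit}. Because $\gamma_m$ increases with $m$, the range $m\le11$ reduces to this single endpoint.

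Beyond that check, I see no real obstacle: the statement is a restriction of \Cref{cor:C2-explicit} to minimum valence 8. I will also note that restricting to $m\ge8$ is harmless, since the lower part of the refined row covers all $m\ge6$.
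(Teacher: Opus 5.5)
Your argument is correct and is the paper's: specialize the $C=2$ tables to minimum valence 8, i.e.\ use $A_2(11,\infty)>\pi/4$, $A_2(8,11)>2\pi/9$ and $A_2(10,\infty)>2\pi/9$ via \Cref{lem:C2-symmetric}, and then apply \Cref{thm:C2-star}. One slip needs fixing: $\gamma_m=6/(2+\cos(2\pi/m))-1$ is \emph{decreasing} in $m$, not increasing, and this is exactly why $m=11$ is the worst endpoint of the range $8\le m\le 11$; with the direction you stated, the reduction to $m=11$ would not follow.
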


\begin{proof}
This is the specialization of Table~\ref{tab:C2-refined} to the global assumption $v(f)\ge8$.  Equivalently, use
\[
 A_2(11,\infty)>\frac\pi4,
 \qquad A_2(8,11)>\frac{2\pi}{9},
 \qquad A_2(10,\infty)>\frac{2\pi}{9},
\]
and apply \Cref{thm:C2-star}.
\end{proof}

The paired-face argument in \cite[proof of Lemma~4.1]{Zhao} gives the following observation, used in \Cref{ex:C2-mixed}.

\begin{lemma}\label{lem:adjacency-propagation}
Let $e$ be a quotient edge of valence greater than 1 in an ideal triangulation. For each occurrence $j$ let $M_j$ be the minimum of its 4 adjacent quotient-edge valences. For any integer $r$, the number of occurrences with $M_j\le r$ is not 1.
\end{lemma}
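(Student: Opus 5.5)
The plan is to use the paired-face structure around $e$. List the occurrences of $e$ cyclically as $j\in\Z/v\Z$, following the edge link. Consecutive occurrences $j$ and $j+1$ lie in tetrahedra $\widehat\sigma_j,\widehat\sigma_{j+1}$ (possibly the same tetrahedron) whose faces containing the respective local copies of $e$ are glued by a face map. This is exactly the edge-link circle used in the proof of \Cref{prop:bichromatic-family}, and it has length $v(e)>1$.

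First step: each occurrence $j$ has four adjacent local edges, two in each of the two faces of $\widehat\sigma_j$ that contain the occurrence. Call these the \emph{left pair} and the \emph{right pair}. The face map gluing the right face of occurrence $j$ to the left face of occurrence $j+1$ fixes $e$ and carries the two other edges of that triangle onto the two other edges of the glued triangle. Hence the right pair of $j$ and the left pair of $j+1$ represent the same two quotient edges, and therefore have the same valences. Write $m_j$ for the minimum valence on the face between occurrences $j$ and $j+1$. Then $M_j=\min(m_{j-1},m_j)$.

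Second step: fix $r$ and let $S=\{j: M_j\le r\}$. Then $j\in S$ if and only if $m_{j-1}\le r$ or $m_j\le r$. Suppose some face index $k$ has $m_k\le r$. Then both $k\in S$ and $k+1\in S$. These are distinct occurrences because $v(e)>1$, so they are distinct residues mod $v$. Hence $\#S\ge2$. If no face has $m_k\le r$, then $S=\emptyset$. In either case $\#S\ne1$.

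The main obstacle is justifying the cyclic arrangement rigorously in the pseudo-manifold setting. The same tetrahedron can contain several occurrences of $e$, and there can be self-pairings, so I must work with occurrences (pairs $(\widehat\sigma,\widehat e)$) and local faces rather than with tetrahedra. The key point is that each local face containing $\widehat e$ is paired with exactly one other local face, and that face contains a different occurrence. This holds because a face is never paired with itself. It also uses that the two faces of $\widehat\sigma$ through $\widehat e$ are distinct. With that in place, the occurrences form a cycle of length $v(e)$ (for the ideal triangulation, the edge link is a circle), and the counting above goes through. If the link is only known to be a union of cycles, the same argument applies componentwise.
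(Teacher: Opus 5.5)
Your argument is correct and is essentially the paper's proof: the face shared by an occurrence and a low-valence adjacent edge is paired to a face containing another occurrence with the same adjacent quotient edge, and that second occurrence is distinct because $v(e)>1$. The paper uses only this single face-pairing step, whereas you organize it along the whole edge-link cycle via $M_j=\min(m_{j-1},m_j)$; also, in your last paragraph, ``a face is never paired with itself'' only gives a different \emph{face}, not a different occurrence (the two faces through $\widehat e$ could be paired with $\widehat e\mapsto\widehat e$), so distinctness really comes from $v(e)>1$, exactly as your second step uses it.
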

\begin{proof}
If an occurrence $\widehat e$ is adjacent to a local edge $\widehat f$ whose quotient valence is at most $r$, the two edges lie in a common triangular face. Its face pairing sends them to adjacent occurrences of the same quotient edges. The image of $\widehat e$ is a different local occurrence: otherwise its two incident local faces are paired to each other, closing its edge-link interval by itself, and the edge class has valence 1. Thus there is a second occurrence with adjacent minimum at most $r$. The two occurrences may lie in the same abstract tetrahedron; they are still counted separately.
\end{proof}

Consequently, if all 8 adjacent minima are at least 10 and at least 7 are at least 11, then all 8 are at least 11. Allowing a single exception therefore gives no additional triangulations. The triangulation in \Cref{ex:C2-mixed} has 2 occurrences with adjacent minimum 10.

\subsection{Examples}\label{subsec:C2-examples}

\label{subsec:cyclic-family}

All triangulations in this subsection are geometric. The cyclic family is covered by \Cref{cor:C2-explicit}, as proved in \Cref{cor:cyclic-bipyramid-geometric}. The finite examples follow from \Cref{cor:C2-explicit}, \Cref{cor:C2-v8}, and \Cref{thm:C2-star}, respectively.

The following cyclic bipyramid construction, whose face pairings are specified in the proof, satisfies Table~\ref{tab:C2-simple}. It is motivated by the bipyramid triangulations associated with the Paoluzzi--Zimmermann manifolds \cite{PaoluzziZimmermann,FominykhVesnin}; Figure~\ref{fig:cyclic-bipyramid} shows the local subdivision and the outer-face coupling.

\begin{figure}[!htbp]
 \centering
 \includegraphics[width=\textwidth]{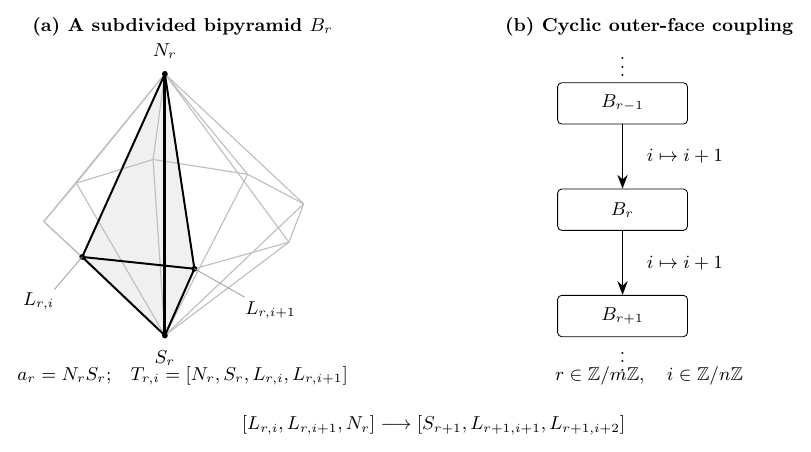}
 \caption{The cyclic bipyramid construction.  Each $B_r$ is subdivided into $T_{r,i}=[N_r,S_r,L_{r,i},L_{r,i+1}]$.  The central edge is $a_r=N_rS_r$, while the other 5 local edges lie in a single quotient class $b$.  The displayed outer-face pairing advances both indices.}
 \label{fig:cyclic-bipyramid}
\end{figure}

\begin{proposition}\label{prop:cyclic-bipyramid}
Let $n\ge6$ and $m\ge1$ satisfy $\gcd(n,m)=1$.  There exists a connected orientable one-vertex ideal triangulation $\mathcal T_{n,m}$ with $mn$ tetrahedra and $m+1$ quotient edges
\[
 a_0,\ldots,a_{m-1},b,
\]
such that
\[
 v(a_r)=n\quad(0\le r<m),
 \qquad
 v(b)=5mn.
\]
Every tetrahedron contains one local occurrence of some $a_r$ and five local occurrences of $b$.  The associated compact $3$-manifold has connected boundary of genus $m(n-1)$.
\end{proposition}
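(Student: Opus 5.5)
I would construct $\mathcal T_{n,m}$ exactly as in Figure~\ref{fig:cyclic-bipyramid}. Take $m$ bipyramids $B_0,\dots,B_{m-1}$ over an $n$-gon, with poles $N_r,S_r$ and equatorial vertices $L_{r,i}$ ($i\in\Z/n\Z$). Subdivide $B_r$ into the $n$ tetrahedra $T_{r,i}=[N_r,S_r,L_{r,i},L_{r,i+1}]$.

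The internal faces $[N_r,S_r,L_{r,i}]$ are glued between $T_{r,i-1}$ and $T_{r,i}$ by the identity on labels. Hence the local edges $N_rS_r$ form one cycle of length $n$, which is the class $a_r$ with $v(a_r)=n$. The remaining faces are the outer faces $[N_r,L_{r,i},L_{r,i+1}]$ and $[S_r,L_{r,i},L_{r,i+1}]$. I would pair the north face of $T_{r,i}$ with the south face of $T_{r+1,i+1}$ by a fixed vertex bijection, chosen to advance both indices. I would pick the bijection to rotate vertex roles, for instance
\[
 N_r\mapsto L_{r+1,i+2},\qquad L_{r,i}\mapsto S_{r+1},\qquad L_{r,i+1}\mapsto L_{r+1,i+1},
\]
so that polar and equatorial local edges get mixed. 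I would then record everything in the tabular convention of \Cref{subsec:encoding}. Every face occurs in exactly one pair, and the dual graph is connected because $(r,i)\mapsto(r+1,i+1)$ generates a transitive action on $\Z/m\Z\times\Z/n\Z$, which holds precisely when $\gcd(n,m)=1$.

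Next I would verify the edge classes as in the proof of \Cref{prop:bichromatic-family}. No outer face contains an edge $N_rS_r$, so the classes $a_r$ are untouched by the outer pairings and are pairwise distinct. For the other five local edges of each tetrahedron, namely $N L_i$, $NL_{i+1}$, $SL_i$, $SL_{i+1}$ and $L_iL_{i+1}$, I would write one explicit chain of identifications. Alternating the internal and outer face maps, this chain should pass through all five types and move from $T_{r,i}$ to $T_{r+1,i+1}$. By the transitivity above, all $5mn$ such local edges then lie in a single class $b$, so $v(b)=5mn$.

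Orientability comes from choosing the outer bijection so that every face map is odd with respect to the orders $[N,S,L_i,L_{i+1}]$. I would list compatible directed local edges to exclude reversed self-identifications. One vertex follows because all poles and equatorial vertices are connected by the face maps.

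Finally, the vertex link is a closed connected orientable surface, by the same edge-link and corner argument as in \Cref{prop:bichromatic-family}. Its cell counts are $2(m+1)$ vertices (two ends of each of the $m+1$ quotient edges), $6mn$ edges and $4mn$ triangles. The Euler characteristic is therefore $2m+2-2mn=2-2m(n-1)$, giving genus $m(n-1)$.

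The main obstacle is choosing the outer-face bijection so that the five non-central edge types really collapse into one class and not two or three, while keeping every face map orientation-reversing. I would first try the rotating map above. I would trace the orbit of $L_{r,i}L_{r,i+1}$ by hand, and if it misses a type, adjust which vertex goes to the pole. I would check the final choice against the link criterion in Appendix~\ref{app:examples}.
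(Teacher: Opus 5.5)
Your proposal is essentially the paper's proof. Your outer bijection is exactly the paper's pairing $[L_{r,i},L_{r,i+1},N_r]\to[S_{r+1},L_{r+1,i+1},L_{r+1,i+2}]$, and the odd-face-map orientation, directed-edge and one-vertex checks are the same. Your vertex-link count $2(m+1),\,6mn,\,4mn$ is an equivalent alternative to the paper's route via $\chi(X_{n,m})=1-(m+1)+2mn-mn$ and $\chi(X_{n,m})=g$.

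One correction for when you trace the chain. Write $u_{r,i}=N_rL_{r,i}$, $v_{r,i}=S_rL_{r,i}$, $h_{r,i}=L_{r,i}L_{r,i+1}$. The outer relations are $h_{r,i}\sim v_{r+1,i+1}$, $u_{r,i+1}\sim h_{r+1,i+1}$ and $u_{r,i}\sim v_{r+1,i+2}$. They give $u_{r,i}\sim u_{r-1,i+1}$, and this chain passes through all five types. So the natural shift is $(-1,+1)$, not $(+1,+1)$. It is still transitive on $\Z/m\Z\times\Z/n\Z$ exactly when $\gcd(n,m)=1$, which is where the hypothesis is actually needed; connectivity of the dual graph holds without it.
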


\begin{proof}
Take indices $r\in\mathbb Z/m\mathbb Z$ and $i\in\mathbb Z/n\mathbb Z$.  Let
\[
 T_{r,i}=[N_r,S_r,L_{r,i},L_{r,i+1}].
\]
Within a fixed $r$, pair the two occurrences of the face $[N_r,S_r,L_{r,i}]$ in the adjacent tetrahedra $T_{r,i-1}$ and $T_{r,i}$ by the identity.  Pair the remaining faces by
\begin{equation}\label{eq:cyclic-outer-pairing}
 [L_{r,i},L_{r,i+1},N_r]
 \longrightarrow
 [S_{r+1},L_{r+1,i+1},L_{r+1,i+2}]
\end{equation}
in the displayed vertex order.  Every local face occurs in exactly one pair, and the dual graph is connected.

The relations in \eqref{eq:cyclic-outer-pairing} identify all local vertices: as $i$ varies, $L_{r,i}\sim S_{r+1}$; also $L_{r,i+1}\sim L_{r+1,i+1}$ and $N_r\sim L_{r+1,i+2}$.  Thus there is one quotient vertex.

The central edge $a_r=N_rS_r$ is unaffected by the outer pairings and appears once in each of the $n$ tetrahedra with first index $r$.  Hence the $a_r$ are distinct and $v(a_r)=n$.  For the remaining local edges, write
\[
 u_{r,i}=N_rL_{r,i},
 \qquad
 v_{r,i}=S_rL_{r,i},
 \qquad
 h_{r,i}=L_{r,i}L_{r,i+1}.
\]
The outer pairing gives
\begin{equation}\label{eq:cyclic-edge-relations}
 h_{r,i}\sim v_{r+1,i+1},
 \qquad
 u_{r,i+1}\sim h_{r+1,i+1},
 \qquad
 u_{r,i}\sim v_{r+1,i+2}.
\end{equation}
Combining these relations yields
\[
 u_{r,i}\sim u_{r-1,i+1}.
\]
The translation $(r,i)\mapsto(r-1,i+1)$ has a single orbit on $\mathbb Z/m\mathbb Z\times\mathbb Z/n\mathbb Z$ because its orbit length is $\operatorname{lcm}(m,n)=mn$ when $\gcd(n,m)=1$.  Consequently all $u_{r,i}$, and hence all $v_{r,i}$ and $h_{r,i}$ by \eqref{eq:cyclic-edge-relations}, form one quotient edge $b$.  It has five occurrences in each of the $mn$ tetrahedra, so $v(b)=5mn$.

To check the manifold condition, orient every tetrahedron by the displayed order $[N_r,S_r,L_{r,i},L_{r,i+1}]$; the internal and outer pairings reverse the induced face orientations.  Orient
\[
 u_{r,i}:N_r\to L_{r,i},
 \qquad
 v_{r,i}:L_{r,i}\to S_r,
 \qquad
 h_{r,i}:L_{r,i+1}\to L_{r,i}.
\]
The 3 relations in \eqref{eq:cyclic-edge-relations} preserve these orientations, while $N_r\to S_r$ is preserved by the internal pairings.  Hence no quotient edge is identified with itself in reverse.  For each quotient edge, the local edge-link intervals have their endpoints paired by the face pairings; the resulting finite graph is connected by definition of the edge class and is $2$-regular.  It is therefore a circle.  Thus the quotient is a $3$-manifold away from its unique vertex, whose link is a connected closed orientable surface.

Let $X_{n,m}$ denote the face-pairing quotient.  Its cell counts are
\[
 V=1,
 \qquad E=m+1,
 \qquad F=2mn,
 \qquad T=mn,
\]
so
\[
 \chi(X_{n,m})=1-(m+1)+2mn-mn=m(n-1).
\]
If the vertex link is $S_g$, deleting an open cone neighborhood gives a compact orientable $3$-manifold $N_{n,m}$ with $\partial N_{n,m}=S_g$.  Since $X_{n,m}=N_{n,m}\cup_{S_g}C(S_g)$,
\[
 \chi(X_{n,m})
 =\chi(N_{n,m})+1-\chi(S_g)
 =\frac12\chi(S_g)+1-\chi(S_g)=g.
\]
Therefore $g=m(n-1)$, as claimed.
\end{proof}

\begin{corollary}\label{cor:cyclic-bipyramid-geometric}
For every $n\ge6$ and $m\ge1$ with $\gcd(n,m)=1$, the triangulation $\mathcal T_{n,m}$ is geometric and the extended Ricci flow converges exponentially to its unique zero-curvature hyper-ideal metric. In particular, with $q$ ranging over the nonnegative integers, the three families
\[
 \bigl\{\mathcal T_{6,\,6q+1}\mid q\ge0\bigr\},\quad
 \bigl\{\mathcal T_{7,\,7q+1}\mid q\ge0\bigr\},\quad
 \bigl\{\mathcal T_{8,\,8q+1}\mid q\ge0\bigr\}
\]
contain edges of valence 6, 7, and 8, respectively, and hence lie outside the uniform minimum-valence-9 theorem. Within each family, the associated compact manifolds are pairwise nonhomeomorphic.
\end{corollary}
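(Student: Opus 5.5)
The plan is to check, for the families $\mathcal T_{n,m}$ of \Cref{prop:cyclic-bipyramid}, the hypotheses of \Cref{cor:C2-explicit} via Table~\ref{tab:C2-simple}. The conclusions about the metric, its uniqueness and exponential convergence then come directly from \Cref{thm:C2-star}, through \Cref{cor:C2-explicit}.

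\emph{Valence hypotheses.} By \Cref{prop:cyclic-bipyramid}, the quotient edges are $a_0,\dots,a_{m-1}$ with $v(a_r)=n\ge6$, and $b$ with $v(b)=5mn\ge30$. So $v(e)\ge6$ for every edge. The only edges that can have valence $6,7,8,9$ are the $a_r$, and only when $n\le9$; for $n\ge10$ there is nothing to check.

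\emph{Local occurrences.} Each tetrahedron $T_{r,i}$ contains exactly one occurrence of $a_r$, namely the central edge $N_rS_r$. Its four adjacent local edges are $u_{r,i},u_{r,i+1},v_{r,i},v_{r,i+1}$, and all of them lie in the class $b$. Hence every adjacent valence equals $5mn\ge30$, which exceeds the largest threshold $19$ in Table~\ref{tab:C2-simple}. Therefore each occurrence of a target of valence $n\in\{6,7,8,9\}$ satisfies the corresponding row of that table, and \Cref{cor:C2-explicit} gives geometricity.

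\emph{The three families.} For $m=nq+1$ with $n\in\{6,7,8\}$, we have $\gcd(n,m)=\gcd(n,1)=1$, so \Cref{prop:cyclic-bipyramid} applies. Each such triangulation contains edges of valence exactly $n<9$, so the uniform minimum-valence-9 theorem does not cover it.

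\emph{Pairwise nonhomeomorphic.} The compact manifold has connected boundary of genus $m(n-1)$. For fixed $n$ this genus is strictly increasing in $q$, and the boundary genus is a topological invariant, so the manifolds within each family are pairwise nonhomeomorphic.

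I expect no real obstacle. The only point needing care is to confirm that the adjacent edges of each $a_r$ occurrence are exactly the $u$ and $v$ edges and never another $a_{r'}$. This holds because the central edge is the only $a$-type edge in each tetrahedron, and the opposite edge $h_{r,i}$ is also of type $b$.
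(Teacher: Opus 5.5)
Your proposal is correct and follows the paper's own proof essentially step for step. Every occurrence of $a_r$ has all four adjacent edges in $b$, with $v(b)=5mn\ge30$, which exceeds the thresholds of Table~\ref{tab:C2-simple}, so \Cref{cor:C2-explicit} applies (vacuously when $n\ge10$); the families with $m=nq+1$ satisfy $\gcd(n,m)=1$ and are distinguished by the strictly increasing boundary genus $(n-1)(nq+1)$.
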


\begin{proof}
For a target $a_r$, all four adjacent edges in every occurrence represent $b$, whose valence is $5mn$. If $n=6,7,8,9$, then $5mn\ge30,35,40,45$, respectively, exceeding the required thresholds $19,14,11,10$ in Table~\ref{tab:C2-simple}. The edge $b$ has valence at least 30. If $n\ge10$, every edge has valence at least 10, so there are no low-valence stars to check in \Cref{thm:C2-star}. Thus \Cref{cor:C2-explicit} applies. For each fixed $n\in\{6,7,8\}$, taking $m=nq+1$ gives $\gcd(n,m)=1$ and boundary genus $(n-1)(nq+1)$, which increases strictly with $q$. This proves the assertions for all three families.
\end{proof}

\begin{unnumberedremark}
The family in \Cref{prop:bichromatic-family} has adjacent occurrences of the same valence-7 edge and satisfies neither Table~\ref{tab:C2-simple} nor Table~\ref{tab:C2-refined}; see \Cref{cor:bichromatic-family}.
\end{unnumberedremark}

We next give three finite examples, referring in each case to its face-pairing data in Appendix~\ref{app:examples}. The lists of quotient-edge valences follow the first-occurrence label order of that appendix.

\begin{example}\label{ex:min6}
The face-pairing $\mathcal E_6$ in Appendix~\ref{data:E6} has 5 tetrahedra, quotient-edge valences $(24,6)$, and vertex-link genus 4. At each of the 6 occurrences of the valence-6 edge all adjacent valences are 24. Since $24\ge19$, the valence-6 row of Table~\ref{tab:C2-simple} holds, and $\mathcal E_6$ is geometric by \Cref{cor:C2-explicit}. The valence-6 edge excludes the local bichromatic and uniform minimum-valence-9 criteria. The example also satisfies the occurrence-wise symmetric conditions of \Cref{cor:C0-explicit}.
\end{example}

\begin{example}\label{ex:C2-refined}
The 8-tetrahedron triangulation $\mathcal C$ in Appendix~\ref{data:C} has valences $(11,9,20,8)$ and vertex-link genus 5. At the valence-8 edge its 8 adjacent minima are $11,11,11,11,11,11,20,20$. At the valence-9 edge all opposite valences belong to $\{8,9,11\}$. Thus $\mathcal C$ is geometric by \Cref{cor:C2-v8}. Three occurrences of the valence-9 edge have an adjacent edge of valence 9, so Table~\ref{tab:C2-simple} does not apply. A tetrahedron meeting a low-valence edge uses more than 2 quotient-edge classes, so the local bichromatic hypothesis also fails.
\end{example}

\begin{example}\label{ex:C2-mixed}
The triangulation $\mathcal R$ in Appendix~\ref{data:R} has 7 tetrahedra, valences $(24,8,10)$, and vertex-link genus 5. Its valence-8 adjacent minima are
\[
 (10,10,24,24,24,24,24,24).
\]
They satisfy the pattern $2\times(M\ge10)+4\times(M\ge11)+2\times(M\ge12)$. The following strict rational lower bounds give an explicit certificate:
\[
 A_2(10,\infty)>\frac{183}{250},\quad
 A_2(11,\infty)>\frac{99}{125},\quad
 A_2(12,\infty)>\frac{211}{250}.
\]
Hence the angle sum is greater than
\[
 2\frac{183}{250}+4\frac{99}{125}+2\frac{211}{250}
 =\frac{158}{25}>\frac{44}{7}>2\pi.
\]
Thus $\mathcal R$ is geometric by \Cref{thm:C2-star}. The valence-8 condition in \Cref{cor:C2-v8} fails at 2 occurrences.
\end{example}

\Needspace{5\baselineskip}
\section{A parameterized invariant box and local criteria}\label{sec:parametric}

We extend the endpoint bounds to $3/2<C\le1+\sqrt2$ and derive symmetric and pair-min criteria, including mixed conditions on entire edge stars.

\subsection{The parameterized invariant-box theorem}\label{subsec:parametric-theorem}

We prove the uniform endpoint estimates, nonemptiness of the boxes, and conditional flow bounds needed for \Cref{thm:parametric-box}, which concludes this subsection.

We now allow a variable upper parameter $C$. Throughout this section,
\begin{equation}\label{eq:C-range}
 \frac32<C\le1+\sqrt2.
\end{equation}
For each integer $n\ge6$, put $c_n=\cos(2\pi/n)$ and define
\begin{equation*}
 \Gamma_n(C)=\frac{C+2-c_n}{C+c_n}
\end{equation*}
and
\begin{equation*}
 B_n(C)=
 \begin{cases}
 C,&6\le n\le9,\\[1mm]
 1+\dfrac{2C^2(1-c_n)}{1+c_n},&n\ge10.
 \end{cases}
\end{equation*}
At $C=2$ these give $\Gamma_n(2)=\gamma_n$ and $B_n(2)=b_n$ from \cite[Eqs.~(3.3)--(3.4)]{Zhao}. For general $C$, they use the endpoint identities
\begin{align}
 \varphi(x,1,1,C,1,1)&=\frac{C+2-Cx}{x+1},\label{eq:lower-endpoint-formula}\\
 \varphi(x,C,C,1,C,C)&=\frac{2C^2-x+1}{2C^2+x-1}.
 \label{eq:upper-endpoint-formula}
\end{align}

Solving \eqref{eq:lower-endpoint-formula}$=c_n$ for $x$ gives $\Gamma_n(C)$. Solving \eqref{eq:upper-endpoint-formula}$=c_n$ gives $1+2C^2(1-c_n)/(1+c_n)$, used as $B_n(C)$ only for $n\ge10$. For $6\le n\le9$ we retain the common upper bound $C$.

We first extend the endpoint inequalities in \cite[Lemma~3.4]{Zhao}. The upper bound uses Feng--Ge--Hua's three-endpoint reduction \cite[Theorem~3.6]{FengGeHua}; the comparisons below establish the stated parameter range.

\begin{lemma}\label{lem:uniform-endpoint}
Let $C$ satisfy \eqref{eq:C-range}.  For every $a,x_2,\ldots,x_6\in[1,C]$,
\begin{align}
 \varphi(a,x_2,x_3,x_4,x_5,x_6)
 &\ge\varphi(a,1,1,C,1,1),\notag
\\
 \varphi(a,x_2,x_3,x_4,x_5,x_6)
 &\le\varphi(a,C,C,1,C,C).
 \label{eq:uniform-upper}
\end{align}
\end{lemma}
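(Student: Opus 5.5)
We prove the two inequalities separately. Both are extremal statements for $\varphi$ on the cube $[1,C]^6$ with $x_1=a$ fixed. In each case we first move the five non-target variables to their extreme values by monotonicity. Then we compare the resulting one-variable expressions.

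\emph{Lower bound.} The plan is to show that on $[1,C]^6$ the partial derivatives of $\varphi$ satisfy $\partial\varphi/\partial x_j\ge0$ for $j=2,3,5,6$ and $\partial\varphi/\partial x_4\le0$. This extends Zhao's monotonicity \eqref{eq:Zhao-monotonicity} from $[1,2]^6$ to the larger cube; note that $1+\sqrt2>2$. Granting this, we lower $x_2,x_3,x_5,x_6$ to $1$ and raise $x_4$ to $C$, which gives exactly $\varphi(a,1,1,C,1,1)$.

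For $x_4$ the sign is immediate. The numerator of \eqref{eq:phi} depends on $x_4$ only through the linear term $(1-x_1^2)x_4$, whose coefficient is $\le0$, and the denominator does not involve $x_4$. For $x_2$ (and, by the symmetry $x_2\leftrightarrow x_3$, $x_5\leftrightarrow x_6$, for the other three) we write $\varphi=N/(\sqrt{P}\sqrt{Q})$ with $P=2x_1x_2x_6+x_1^2+x_2^2+x_6^2-1$. Then the sign of the derivative is the sign of $N_{x_2}P-\tfrac12NP_{x_2}$, which is a polynomial. After substituting $x_i=1+s_i$, we expect this polynomial to be nonnegative on the required box. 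The certificate would be an expansion with nonnegative coefficients, plus explicit handling of the few negative terms using $s_i\le\sqrt2$. This polynomial sign check is the main obstacle; it is what restricts the range to $C\le1+\sqrt2$. If the monotonicity fails somewhere near $C=1+\sqrt2$, the fallback is to prove the inequality directly. In that case we would minimize over $x_4$ first (it enters linearly) and then treat the remaining four variables pairwise.

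\emph{Upper bound.} Here we invoke Feng--Ge--Hua's three-endpoint reduction \cite[Theorem~3.6]{FengGeHua}. It reduces the supremum of $\varphi$ over a box with the target fixed to a small finite list of endpoint configurations. Among these, $(a,C,C,1,C,C)$ is the one given by pure monotonicity, and the others are a few configurations in which some adjacent variables sit at $1$.

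The first step is to enumerate these candidate configurations explicitly, as rational functions of $a$ and $C$. The second step is to compare each candidate with the right-hand side of \eqref{eq:upper-endpoint-formula}, namely $(2C^2-a+1)/(2C^2+a-1)$. For each candidate this comparison amounts to checking that a polynomial in $(a,C)$ is nonnegative on $1\le a\le C$, $3/2<C\le1+\sqrt2$. We would verify this by the same substitution $a=1+s$, $C=1+t$ and sign analysis, with interval evaluation as a backup at the endpoint $C=1+\sqrt2$.

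The lower limit $C>3/2$ is not needed for these inequalities. It is retained only so that the boxes are nonempty later, as in the statement of \Cref{thm:parametric-box}.
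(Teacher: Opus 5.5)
\emph{There is a genuine gap in the lower bound.} Your upper-bound argument follows the paper's proof. You invoke the three-endpoint bound of Feng--Ge--Hua and check that the other candidates $\varphi(a,C,1,1,C,1)$ and $\varphi(a,C,C,1,C,1)$ do not exceed $\varphi(a,C,C,1,C,C)$. The second candidate contains square roots, so there you compare squares of positive quantities.

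\emph{Where it fails.} The lower bound rests on a false claim: once $C>2$, $\varphi$ is \emph{not} nondecreasing in $x_2,x_3,x_5,x_6$ on the full cube $[1,C]^6$. Your polynomial factors as
$N_{x_2}P-\tfrac12NP_{x_2}=(x_1^2-1)\bigl((x_1x_4+x_2x_5-x_3x_6)x_6+x_3+x_1x_5+x_2x_4\bigr)$.
At $x_2=x_4=x_5=1$, $x_3=x_6=C$, the bracket equals $(C+1)\bigl(x_1-(C^2-C-1)\bigr)$. This is negative for $1<x_1<C^2-C-1$, and that interval is nonempty exactly when $C>2$. There $\partial\varphi/\partial x_2<0$. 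So Zhao's cube $[1,2]^6$ is sharp for full-cube monotonicity, and no certificate of the kind you describe exists for $C\in(2,1+\sqrt2]$. A sign check on the whole cube would cut the range at $C=2$, not at $1+\sqrt2$. The paper explicitly notes that its extension does not assert adjacent monotonicity on the entire cube.

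\emph{The missing step.} Your fallback only gestures at the fix, and the fix is forced rather than optional.
Since $\partial\varphi/\partial x_4\le0$ everywhere, first replace $x_4$ by $C$. Then prove adjacent monotonicity only on the slice $\{x_4=C\}$.
On that slice the bracket is $(aC+x_2x_5-x_3x_6)x_6+x_3+ax_5+Cx_2$.
Using $x_2x_5\ge1$, $x_3(1-x_6^2)\ge C(1-x_6^2)$, $x_5\ge1$ and $x_2\ge1$, the bracket is at least $-Cx_6^2+(aC+1)x_6+a+2C$.
This lower bound is concave in $x_6$, so it suffices to check the endpoints:
at $x_6=1$ it equals $(a+1)(C+1)>0$;
at $x_6=C$ it equals $a(C^2+1)+3C-C^3\ge-(C+1)(C^2-2C-1)\ge0$.
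The last inequality is exactly $C\le1+\sqrt2$; this is where that threshold actually comes from.
The symmetries of $\varphi$ that permute the adjacent variables (simultaneous swaps $x_2\leftrightarrow x_3$, $x_5\leftrightarrow x_6$, or $x_2\leftrightarrow x_5$, $x_3\leftrightarrow x_6$) fix $x_4$. Hence the other three adjacent derivatives are $x_2$-derivatives at points of the same slice.
Lowering $x_2,x_3,x_5,x_6$ to $1$ within the slice then gives $\varphi(a,1,1,C,1,1)$.
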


\begin{proof}
For the lower estimate, since $\partial\varphi/\partial x_4\le0$, the minimum occurs at $x_4=C$.  The derivative formula of Feng--Ge--Hua \cite[Eq.~(3.2)]{FengGeHua} gives
\[
 \frac{\partial\varphi}{\partial x_2}
 =A_0\bigl((aC+x_2x_5-x_3x_6)x_6+x_3+ax_5+Cx_2\bigr),
 \qquad A_0\ge0.
\]
The expression in parentheses is bounded below by
\[
 -Cx_6^2+(aC+1)x_6+a+2C.
\]
This is concave in $x_6$, so its minimum on $[1,C]$ is attained at an endpoint.  At $x_6=1$ the value is $a(C+1)+C+1>0$, while at $x_6=C$ it is
\[
 a(C^2+1)+3C-C^3
 \ge C^2+1+3C-C^3
 =-(C+1)(C^2-2C-1)\ge0.
\]
The last inequality is exactly the upper restriction $C\le1+\sqrt2$. The identities
\begin{align*}
 \varphi(a,x_2,x_3,C,x_5,x_6)
 &=\varphi(a,x_3,x_2,C,x_6,x_5)\\
 &=\varphi(a,x_5,x_6,C,x_2,x_3)
 =\varphi(a,x_6,x_5,C,x_3,x_2)
\end{align*}
show that each of the other 3 adjacent derivatives equals the $x_2$-derivative at a permuted point of the same cube. Thus all 4 derivatives are nonnegative, and the minimum is attained at $(x_2,x_3,x_4,x_5,x_6)=(1,1,C,1,1)$.

For the upper estimate, apply Feng--Ge--Hua's three-endpoint bound \cite[Theorem~3.6]{FengGeHua}, with their common upper bound equal to $C$. It gives
\[
 \varphi(a,x_2,\ldots,x_6)\le\max\{U,V,W\},
\]
where direct substitution in \eqref{eq:phi} yields
\begin{align*}
 U&=\varphi(a,C,C,1,C,C)=\frac{2C^2-a+1}{2C^2+a-1},\\
 V&=\varphi(a,C,1,1,C,1)
   =\frac{-a^2+aC^2+a+2C+1}{(a+C)^2},\\
 W&=\varphi(a,C,C,1,C,1)
   =\frac{\sqrt{a+1}(C^2+C+1-a)}{(a+C)\sqrt{2C^2+a-1}}.
\end{align*}
We show $V,W\le U$. Set
\begin{align*}
 K(a,C)&=(3C+1)(a-1)+2(C+1)(1+2C-C^2),\\
 J(a,C)&=2C^2(C+1)^2(3-C)\\
 &\quad +(a-1)(C-1)(7C^2+4C+1)+4C(a-1)(C-a).
\end{align*}
Clearing denominators and expanding gives the identities
\begin{align*}
 U-V&=\frac{(C-1)(a-1)K(a,C)}{(a+C)^2(2C^2+a-1)},\\
 U^2-W^2&=\frac{(C-1)(a-1)J(a,C)}{(a+C)^2(2C^2+a-1)^2}.
\end{align*}
All denominators are positive. Since $1\le a\le C\le1+\sqrt2<3$ and
$1+2C-C^2=2-(C-1)^2\ge0$, every term displayed in $K$ and $J$ is nonnegative. Thus $V\le U$ and $W^2\le U^2$. Both $U$ and $W$ are positive, so $W\le U$. The maximum is therefore $U$, proving \eqref{eq:uniform-upper}.
\end{proof}

\begin{lemma}\label{lem:GammaB}
For $C$ satisfying \eqref{eq:C-range} and every $n\ge6$,
\[
 1<\Gamma_n(C)<B_n(C)\le C,
\]
with strict $B_n(C)<C$ for $n\ge10$.
\end{lemma}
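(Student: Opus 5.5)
The proof is a direct verification from the explicit formulas, using only $c_n=\cos(2\pi/n)\in[1/2,1)$ for $n\ge6$ and the range \eqref{eq:C-range}. We take the three inequalities in the order $\Gamma_n>1$, then $B_n\le C$, then $\Gamma_n<B_n$, treating the last separately for $6\le n\le9$ and for $n\ge10$.

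\emph{Lower bound.} Since $C+c_n>0$, we have $\Gamma_n(C)-1=2(1-c_n)/(C+c_n)>0$.

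\emph{Upper bound.} For $6\le n\le9$ we have $B_n(C)=C$ by definition. For $n\ge10$, the inequality $B_n(C)<C$ is equivalent to
\[
 \frac{1-c_n}{1+c_n}<\frac{C-1}{2C^2}.
\]
The left side decreases in $n$, so it suffices to check $n=10$. There $c_{10}=(1+\sqrt5)/4$ gives $(1-c_{10})/(1+c_{10})=(3-\sqrt5)/(5+\sqrt5)\approx0.1056$. The right side $(C-1)/(2C^2)$ is unimodal in $C$ with its maximum at $C=2$, so its minimum on $(3/2,1+\sqrt2]$ is at an endpoint. At $C=3/2$ it equals $1/9\approx0.111$, and at $C=1+\sqrt2$ it equals $\sqrt2/(2(3+2\sqrt2))\approx0.1213$. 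Both exceed $0.1056$, and the left endpoint is excluded anyway, so the strict inequality holds. Replacing the decimals with exact comparisons is routine: for instance, $9(3-\sqrt5)<5+\sqrt5$ is equivalent to $22<10\sqrt5$, which is true.

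\emph{Middle inequality, $6\le n\le9$.} We need $\Gamma_n(C)<C$, that is,
\[
 C+2-c_n<C^2+Cc_n,\qquad\text{equivalently}\qquad C^2+(c_n-1)C+c_n-2>0.
\]
The left side factors as $(C+1)(C+c_n-2)$, so the condition becomes $C>2-c_n$. Since $c_n\ge1/2$, we have $2-c_n\le3/2<C$, which proves it. This step shows exactly where the lower restriction $C>3/2$ is needed: at $n=6$ the inequality is sharp at $C=3/2$.

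\emph{Middle inequality, $n\ge10$.} This is the main obstacle. Write $c=c_n\in[c_{10},1)$ and $\varepsilon=1-c$. Then
\[
 \Gamma_n-1=\frac{2\varepsilon}{C+c},\qquad B_n-1=\frac{2C^2\varepsilon}{1+c},
\]
so the claim reduces to $(1+c)<C^2(C+c)$. With $C>3/2$ and $c<1$, we get $C^2(C+c)>\tfrac94(\tfrac32+c)>1+c$. This settles the case easily.

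The only real obstacle is the upper bound for $n\ge10$. It is a two-variable inequality, and we handle it by monotonicity in $n$ combined with checking the endpoints in $C$, using exact values for $c_{10}$.
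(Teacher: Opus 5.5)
Your proof is correct and takes essentially the same route as the paper: the same identities for $\Gamma_n-1$, $C-\Gamma_n=(C+1)(C-2+c_n)/(C+c_n)$ and $B_n-\Gamma_n$, with the case split at $n=10$. The only difference is cosmetic and concerns $B_n<C$ for $n\ge10$: the paper uses $c_{10}>4/5$ to get $(1-c_n)/(1+c_n)<1/9$, and then the factorization $9(C-1)-2C^2=(2C-3)(3-C)>0$, where you instead use unimodality of $(C-1)/(2C^2)$ and check the endpoints.
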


\begin{proof}
For $n\ge6$ we have $1/2\le c_n<1$. Hence
\[
 \Gamma_n(C)-1=\frac{2(1-c_n)}{C+c_n}>0,
 \qquad
 C-\Gamma_n(C)=\frac{(C+1)(C-2+c_n)}{C+c_n}>0,
\]
since $C>3/2$. For $n\ge10$, the inequality
$c_n\ge c_{10}=(1+\sqrt5)/4>4/5$ gives
\[
 B_n(C)=1+\frac{2C^2(1-c_n)}{1+c_n}
 <1+\frac{2C^2}{9}<C,
 \qquad
 C-1-\frac{2C^2}{9}=\frac{(2C-3)(3-C)}9>0.
\]
Here $3/2<C\le1+\sqrt2<3$. Finally, for $n\ge10$,
\[
 B_n(C)-\Gamma_n(C)
 =\frac{2(1-c_n)\{C^2(C+c_n)-(1+c_n)\}}
 {(1+c_n)(C+c_n)}>0,
\]
because $C>1$ and $0<c_n<1$. For $6\le n\le9$, the already proved $\Gamma_n(C)<C=B_n(C)$ completes the proof.
\end{proof}

The next lemma extends \Cref{lem:C2-conditional} to \eqref{eq:C-range}, using \Cref{lem:uniform-endpoint}.

\begin{lemma}\label{lem:C-conditional}
Fix $C\in(3/2,1+\sqrt2]$. Assume $v(f)\ge6$ for every $f\in E$. Let $l(t)$ solve \eqref{eq:flow}, put $x_f(t)=\cosh l_f(t)$, and fix $t_0>0$. Suppose
\[
 x_f(t)\le C\quad\text{for every }f\in E\text{ and every }t\in[0,t_0],
\]
and
\[
 \Gamma_{v(f)}(C)<x_f(0)<B_{v(f)}(C)\quad\text{for every }f\in E.
\]
Then, for every $t\in[0,t_0]$,
\begin{align*}
 x_f(t)&\ge\Gamma_{v(f)}(C)
 &&\text{for every }f\in E,\\
 x_f(t)&\le B_{v(f)}(C)
 &&\text{for every }f\in E\text{ with }v(f)\ge10.
\end{align*}
\end{lemma}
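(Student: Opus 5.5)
We argue by first contact, edge by edge, as in the proof of \Cref{lem:C2-conditional}, with \Cref{lem:uniform-endpoint} replacing Zhao's $C=2$ endpoint inequalities. Throughout $[0,t_0]$ every cosh-length lies in $[1,C]$, because $x_f\ge1$ automatically and $x_f\le C$ by hypothesis. So at any time each local six-tuple lies in the cube $[1,C]^6$ where \Cref{lem:uniform-endpoint} applies. By \Cref{lem:GammaB} the initial intervals $(\Gamma_{v(f)}(C),B_{v(f)}(C))$ are nonempty subintervals of $(1,C]$, so the hypotheses are consistent.

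\emph{Lower bound.} Suppose some edge $f$ of valence $n$ violates $x_f\ge\Gamma_n(C)$ somewhere in $[0,t_0]$. Then for small $\epsilon>0$ there is a first time $T\in(0,t_0]$ at which $x_f$ reaches a level $a\in(1,\Gamma_n(C))$ from above, with $\dot x_f(T)\le0$.

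At that instant every occurrence of $f$ is the target $e_1$ of a tuple with $x_1=a$ and the other entries in $[1,C]$. The lower endpoint estimate of \Cref{lem:uniform-endpoint} and \eqref{eq:lower-endpoint-formula} give
\[
\varphi\ge\frac{C+2-Ca}{a+1}.
\]
This expression is strictly decreasing in $a$, since its derivative has numerator $-C(a+1)-(C+2-Ca)=-2C-2<0$. It equals $c_n$ at $a=\Gamma_n(C)$, so it is strictly larger than $c_n$ for $a<\Gamma_n(C)$. Hence each occurrence angle is $<2\pi/n$, using clamping and the monotonicity of $\arccos$. The total angle is then $<2\pi$, so $\Ktil_f>0$ and $\dot x_f(T)>0$ by \eqref{eq:xflow}, a contradiction.

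To make the "first crossing" rigorous, take $a$ slightly below $\Gamma_n(C)$ and let $T=\inf\{t:x_f(t)\le a\}$. Continuity gives $x_f(T)=a$ and $x_f>a$ before $T$, so $\dot x_f(T)\le0$. Letting $a\uparrow\Gamma_n(C)$ yields $x_f\ge\Gamma_n(C)$.

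\emph{Upper bound for $n=v(f)\ge10$.} The argument is symmetric. Take a level $a$ with $B_n(C)<a\le C$; since $B_n(C)<C$ by \Cref{lem:GammaB}, such levels exist. Consider the first time $x_f$ reaches $a$ from below. At that time \eqref{eq:uniform-upper} and \eqref{eq:upper-endpoint-formula} give
\[
\varphi\le\frac{2C^2-a+1}{2C^2+a-1}.
\]
This bound is strictly decreasing in $a$ and equals $c_n$ at $a=B_n(C)$, so it is $<c_n$ for $a>B_n(C)$. Each occurrence angle is therefore $>2\pi/n$, the total exceeds $2\pi$, and $\dot x_f<0$, a contradiction. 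Letting $a\downarrow B_n(C)$ yields $x_f\le B_n(C)$.

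Neither argument uses any valence information about the other edges; it uses only the cube bound $[1,C]$. So each edge can be treated independently.

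The main point to check is the interplay of clamping with strict inequalities. When the bound exceeds $1$, $\arccos$ of the clamped value is $0<2\pi/n$, so the lower-bound case is still fine. In the upper case the bound is always $>-1$ because $2C^2-a+1>-(2C^2+a-1)$, so no degeneracy occurs.

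A second point is that $t_0>0$ with initial values strictly inside the intervals ensures the first contact time is positive. The apparent circularity in the upper bound never arises, since the cube $[1,C]$ is given by hypothesis rather than derived.
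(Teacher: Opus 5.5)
Your proposal is correct and follows essentially the same route as the paper. Both arguments take a first hitting time at a level strictly beyond each barrier, use the endpoint bounds of \Cref{lem:uniform-endpoint} together with the strict monotonicity of $(C+2-Ca)/(a+1)$ and $(2C^2-a+1)/(2C^2+a-1)$, and get a contradiction from the sign of $\Ktil_f$ in \eqref{eq:xflow}; the stray ``small $\epsilon>0$'' in your lower-bound paragraph is never used and can be deleted.
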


\begin{proof}
We use the first-crossing argument of \cite[Propositions~3.5--3.6]{Zhao}. By \Cref{lem:GammaB}, each initial interval $(\Gamma_{v(f)}(C),B_{v(f)}(C))$ is nonempty. The two endpoint functions
\[
 f_C(a)=\frac{C+2-Ca}{a+1},\qquad
 g_C(a)=\frac{2C^2-a+1}{2C^2+a-1}
\]
are strictly decreasing, with derivatives $-2(C+1)/(a+1)^2$ and $-4C^2/(2C^2+a-1)^2$.

If the lower bound is violated at an edge $e\in E$, put $n=v(e)\ge6$, the number of local occurrences of $e$. Choose a reached level $1<a<\Gamma_n(C)$ and let $T$ be its first hitting time from above. Then $x_e(T)=a$ and $\dot x_e(T)\le0$. By \Cref{lem:uniform-endpoint}, every incident cosine at time $T$ is at least $f_C(a)>f_C(\Gamma_n(C))=c_n$. Since $c_n\in(-1,1)$, clamping preserves the conclusion that every extended angle is smaller than $2\pi/n$. There are exactly $n$ such angles, so their sum is smaller than $2\pi$. Thus $\Ktil_e(l(T))>0$, and \eqref{eq:xflow} gives $\dot x_e(T)>0$, a contradiction.

For an edge $e$ of valence $n=v(e)\ge10$, a violation of the upper bound gives a reached level $B_n(C)<a<C$. Let $T$ be its first hitting time from below, so $x_e(T)=a$ and $\dot x_e(T)\ge0$. The same lemma gives every incident cosine at time $T$ at most $g_C(a)<g_C(B_n(C))=c_n$. Hence every extended angle is larger than $2\pi/n$, and the sum of the $n$ angles is larger than $2\pi$. Thus $\Ktil_e(l(T))<0$ and $\dot x_e(T)<0$, again a contradiction. No upper bound sharper than $C$ is claimed for $6\le n\le9$. Thus neither barrier can be crossed; equality at a barrier is allowed. Strict curvature signs are used only at levels strictly beyond the barriers.
\end{proof}

\begin{condition}\label{cond:parametric-star}
For every edge $e$ of valence 6, 7, 8, or 9, assign numbers $x_f$ to the quotient edges $f\in\mathcal E(\St(e))$ satisfying
\[
 \Gamma_{v(f)}(C)\le x_f\le B_{v(f)}(C),
 \qquad x_e=C.
\]
Compute the extended angle at every local occurrence of $e$ using the 6 corresponding values.  We require that, for every such assignment,
\begin{equation*}
 \sum_{\widehat e\in\PE^{-1}(e)}\alpha(\widehat e)>2\pi.
\end{equation*}
This condition is static: it is an inequality on a finite-dimensional box and contains no time variable.
\end{condition}

We now apply the first-contact argument used in \Cref{thm:C2-star} at the level $C$.

\begin{theorem}\label{thm:parametric-box}\label{thm:intro-parametric}
Let $C\in(3/2,1+\sqrt2]$.  Assume $v(e)\ge6$ for every edge $e\in E$ and that \Cref{cond:parametric-star} holds. Then every flow whose initial cosh-length vector $x(0)$ lies in the interior of
\[
 Q_C=\prod_{e\in E}[\Gamma_{v(e)}(C),B_{v(e)}(C)]
\]
remains in $Q_C$ for all time.  In particular, $(M,\mathcal T)$ is geometric, the zero-curvature metric is unique, and the extended combinatorial Ricci flow converges exponentially from any positive initial condition.
\end{theorem}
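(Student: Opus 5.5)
The proof will follow the first-contact scheme of \Cref{thm:C2-star}, with the level $2$ replaced by $C$ and \Cref{lem:C-conditional} used in place of \Cref{lem:C2-conditional}. Fix $x(0)\in\operatorname{int}Q_C$. By \Cref{lem:GammaB}, this means $\Gamma_{v(f)}(C)<x_f(0)<B_{v(f)}(C)\le C$ for every $f$, and so every coordinate starts strictly below $C$.

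\emph{Step 1: no coordinate reaches $C$.} Suppose the contrary, and let $T>0$ be the first time at which some coordinate equals $C$. On $[0,T]$ we have $x_f\le C$ for all $f$, so \Cref{lem:C-conditional} with $t_0=T$ gives two bounds on $[0,T]$:
\[
 x_f\ge\Gamma_{v(f)}(C)\quad\text{for all }f,\qquad x_f\le B_{v(f)}(C)<C\quad\text{whenever }v(f)\ge10,
\]
the strict inequality coming from \Cref{lem:GammaB}. Hence the contact edge $e$ has $6\le v(e)\le9$. At time $T$ the restriction of $x(T)$ to $\mathcal E(\St(e))$ is one of the admissible assignments in \Cref{cond:parametric-star}: $x_e=C$, and for every other $f$ we have $\Gamma_{v(f)}(C)\le x_f\le B_{v(f)}(C)$. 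For low-valence $f$ the upper bound $x_f\le C=B_{v(f)}(C)$ holds simply because $T$ is the first contact time. \Cref{cond:parametric-star} then gives an angle sum larger than $2\pi$ at $e$, so $\Ktil_e<0$ and $\dot x_e(T)<0$ by \eqref{eq:xflow}. This contradicts $\dot x_e(T)\ge0$ at a first contact from below.

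\emph{Step 2: invariance and existence.} By Step 1, $x_f(t)<C$ for all $t\ge0$ and all $f$. Applying \Cref{lem:C-conditional} on every interval $[0,t_0]$ then gives $x(t)\in Q_C$ for all $t$. The corresponding lengths therefore lie in a compact box $\prod_e[\arccosh\Gamma_{v(e)}(C),\arccosh B_{v(e)}(C)]$ with positive lower ends. \Cref{prop:compactness} produces $l^*$ in this box with $\Ktil(l^*)=0$, and $\cosh l^*_e\le C\le1+\sqrt2<3$. Exactly as in the proof of \Cref{thm:abstract-box}, Feng--Ge--Hua's small-length criterion \cite[Theorem~3.9]{FengGeHua} shows that $l^*\in L(M,\mathcal T)$, so $l^*$ is a genuine zero-curvature metric. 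Uniqueness follows from Luo--Yang rigidity \cite[Theorem~1.2]{LuoYang}, and exponential convergence from every positive initial vector follows from \cite[Theorem~1.6]{FengGeHua}.

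\emph{Main obstacle.} The delicate point is Step 1: making sure the first contact genuinely lands in the static box of \Cref{cond:parametric-star}. This requires three things. First, the time-$T$ configuration must satisfy every lower bound $\Gamma$; this is where \Cref{lem:C-conditional} needs its hypothesis $x\le C$ on all of $[0,T]$, which holds by the definition of $T$. Second, high-valence edges must be excluded as contact edges, which uses the strict $B_n(C)<C$ for $n\ge10$. Third, the condition is stated on quotient edges, while angles are evaluated at occurrences, possibly with repeated quotient edges inside one tetrahedron. Since the assignment is by quotient edge and the actual state $x(T)$ is itself such an assignment, no additional monotonicity argument is needed.
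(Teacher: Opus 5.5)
Your proposal is correct and follows the paper's proof essentially step for step. It runs a first-contact argument at level $C$, in which \Cref{lem:C-conditional} and the strict bound $B_n(C)<C$ for $n\ge10$ place the contact edge's star in the static box of \Cref{cond:parametric-star}, and then concludes with \Cref{prop:compactness}, the bound $C<3$ with Feng--Ge--Hua's small-length criterion, rigidity, and the convergence theorem — the same chain the paper uses, with your explicit check that low-valence neighbours satisfy $x_f\le C=B_{v(f)}(C)$ at the contact time filling in a detail the paper leaves implicit.
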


\begin{proof}
Suppose that some edge first reaches $C$, and let $T$ be the first such time.  On $[0,T]$ the hypothesis of \Cref{lem:C-conditional} holds.  Hence at time $T$ every edge satisfies the lower bound, and every edge of valence at least 10 satisfies $x_f(T)\le B_{v(f)}(C)<C$.  Choose a first-contact edge $e$, so $x_e(T)=C$. Its valence is 6, 7, 8, or 9, and the entire edge star lies in the box appearing in \Cref{cond:parametric-star}.  Its incident angle sum is therefore larger than $2\pi$, so $\Ktil_e<0$ and $\dot x_e(T)<0$, a contradiction.

Thus $x_f(t)<C$ for every $f\in E$ and $t\ge0$.  Applying \Cref{lem:C-conditional} on arbitrary finite intervals gives all lower bounds and the high-valence upper bounds for all time.  The flow stays in $Q_C$.  The conclusion follows from \Cref{prop:compactness}, the inequality $C<3$, and the rigidity and convergence theorems.
\end{proof}

\subsection{Symmetric and pair-min corollaries}\label{subsec:abstract-local-bounds}

The goal is the unified criterion in \Cref{cor:combined-star}, a computable consequence of \Cref{thm:parametric-box}. We first prove the upper-face monotonicity lemma and the symmetric occurrence estimate, followed by the pair-min lemma needed for the proof of that corollary.

The next lemma extends \cite[Lemma~3.4]{Zhao} on the upper face $x_1=C$. Unlike the $C=2$ result, it does not assert adjacent-variable monotonicity on the entire cube.

\begin{lemma}\label{lem:upper-face-monotonicity}
Let $1<C\le1+\sqrt2$ and fix $x_1=C$.  On $[1,C]^5$, the function $\varphi$ is nondecreasing in $x_2,x_3,x_5,x_6$ and nonincreasing in $x_4$.
\end{lemma}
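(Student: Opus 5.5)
The plan is to compute the partial derivatives of $\varphi$ from \eqref{eq:phi} at $x_1=C$ and check their signs on $[1,C]^5$. We reuse the structure already seen in the proof of \Cref{lem:uniform-endpoint}, now with $x_4$ free instead of frozen at $C$.

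\textbf{The $x_4$-derivative.} The variable $x_4$ appears only in the numerator of $\varphi$, with coefficient $1-x_1^2=1-C^2\le0$. Both radicands are positive on $[1,\infty)^6$. Hence
\[
\partial\varphi/\partial x_4=(1-C^2)/(\text{positive})\le0 .
\]
This gives the claim for $x_4$ immediately.

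\textbf{The $x_2$-derivative.} We invoke the Feng--Ge--Hua derivative formula \cite[Eq.~(3.2)]{FengGeHua}, already quoted in the proof of \Cref{lem:uniform-endpoint}, but now keep $x_4$ general. With $a=x_1=C$ it should read
\[
\partial\varphi/\partial x_2=A_0\bigl((Cx_4+x_2x_5-x_3x_6)x_6+x_3+Cx_5+x_4x_2\bigr),\qquad A_0\ge0,
\]
possibly up to the exact placement of $x_4$. I would rederive it by differentiating \eqref{eq:phi} directly and factoring, to confirm this form.

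\textbf{Sign of the bracket.} The terms $x_2x_5x_6$, $x_3$, $Cx_5$ and $x_4x_2$ are increasing or nonnegative, so the bracket is bounded below by replacing $x_2,x_5\to1$ in the positive terms and $x_3\to C$ in the negative term. Since the $x_4$ coefficient $Cx_6+x_2$ is positive, we may also set $x_4\to1$. This leaves the quadratic
\[
q(x_6)=-Cx_6^2+(C+1)x_6+C+1+C\quad\text{or similar},
\]
which is concave in $x_6$. It therefore suffices to check the endpoints $x_6=1$ and $x_6=C$. At $x_6=C$ one expects an expression like $-C^3+C^2+2C+1$ or $-(C+1)(C^2-2C-1)$, which is nonnegative exactly when $C\le1+\sqrt2$. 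This is the main obstacle: getting the exact lower-bound polynomial right with $x_4$ free rather than $x_4=C$.

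If the crude bound fails, I would first try a sharper grouping. Write $x_6(Cx_4-x_3x_6)+x_3+\dots$ and use $x_3x_6^2\le C x_6^2$ together with $Cx_4x_6+x_4x_2\ge Cx_6+1$. Failing that, I would reduce to a finite check at the vertices of the cube in the remaining variables, using multilinearity in $x_2,x_3,x_4,x_5$ and concavity in $x_6$.

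\textbf{The remaining adjacent variables.} The symmetry identities displayed in \Cref{lem:uniform-endpoint}, with $x_4$ arbitrary, hold because they come from relabelings of the tetrahedron fixing $e_1$ and $e_4$. Under them the $x_3$-, $x_5$- and $x_6$-derivatives are $x_2$-derivatives at permuted points of the same cube $[1,C]^5$. All four adjacent derivatives are therefore nonnegative, which proves the claimed monotonicity.
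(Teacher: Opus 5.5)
Your proposal follows the paper's proof: the sign of $\partial\varphi/\partial x_4$ read off from \eqref{eq:phi}, then the Feng--Ge--Hua bracket $(Cx_4+x_2x_5-x_3x_6)x_6+x_3+Cx_5+x_2x_4$ (exactly right, with no adjustment of $x_4$ needed), then the concave lower bound $-Cx_6^2+(C+1)x_6+2C+1$ with endpoint checks, and finally the relabeling symmetries for $x_3,x_5,x_6$. Two points to fix when writing it up: the constant $2C+1$ requires replacing $x_3$ by $C$ in the combined term $x_3(1-x_6^2)$, not only in $-x_3x_6^2$ (that would give $C+2$ and fail near $C=1+\sqrt2$); and the endpoint values are $q(1)=2C+2$ and $q(C)=-C^3+C^2+3C+1=-(C+1)(C^2-2C-1)$, not $-C^3+C^2+2C+1$.
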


\begin{proof}
The sign $\partial\varphi/\partial x_4\le0$ follows directly from \eqref{eq:phi}.  The derivative formula of Feng--Ge--Hua \cite[Eq.~(3.2)]{FengGeHua} gives
\[
 \frac{\partial\varphi}{\partial x_2}
 =A_0\bigl((Cx_4+x_2x_5-x_3x_6)x_6+x_3+Cx_5+x_2x_4\bigr),
 \qquad A_0\ge0.
\]
The bracket is bounded below by
\[
 (C+1-Cx_6)x_6+2C+1.
\]
This is concave in $x_6$. At $x_6=1$ its value is $2C+2>0$, and at $x_6=C$ its value is
\[
 -C^3+C^2+3C+1=-(C+1)(C^2-2C-1)\ge0.
\]
The remaining adjacent variables follow from the symmetries of $\varphi$.
\end{proof}

For $M\ge6$ and $m\in\{6,7,\ldots\}\cup\{\infty\}$, set $\Gamma_\infty(C)=1$ and
\begin{equation*}
 A_C(M,m)=\arccos\!\left(
 \varphi\bigl(C,B_M(C),B_M(C),\Gamma_m(C),B_M(C),B_M(C)\bigr)
 \right).
\end{equation*}
Here $m=\infty$ means that no opposite-valence information is used. When only the common adjacent upper bound $C$ is used, write
\begin{align*}
 A_C(\un,m)&:=\arccos\varphi(C,C,C,\Gamma_m(C),C,C)\\
 &=A_C(6,m)=A_C(7,m)=A_C(8,m)=A_C(9,m),
\end{align*}
since $B_n(C)=C$ for $6\le n\le9$.

No clamp is needed in this definition. Indeed, writing $p=B_M(C)$ and $\delta=\Gamma_m(C)$, direct substitution yields
\begin{equation}\label{eq:symmetric-q}
 q_C(p,\delta):=\varphi(C,p,p,\delta,p,p)=\frac{2p^2-(C-1)\delta}{2p^2+C-1}.
\end{equation}
For $3/2<C\le1+\sqrt2$ and $1\le p,\delta\le C$, its denominator is positive, and
\[
 1-q_C=\frac{(C-1)(1+\delta)}{2p^2+C-1}>0,\qquad
 1+q_C=\frac{4p^2-(C-1)(\delta-1)}{2p^2+C-1}>0,
\]
since the latter numerator is at least $4-(C-1)^2\ge2$. Thus $-1<q_C<1$, also when $m=\infty$.

\begin{lemma}\label{lem:occurrence-parametric}
Fix $C\in(3/2,1+\sqrt2]$ and a local occurrence $e_1$ of a target quotient edge. Suppose that every quotient edge $f$ appearing in its tetrahedron has $v(f)\ge6$ and satisfies
\[
 \Gamma_{v(f)}(C)\le x_f\le B_{v(f)}(C),
\]
with $x_1=C$. If all 4 adjacent quotient edges have valence at least $M$ and the opposite edge has valence $m$, then $\alpha_1\ge A_C(M,m)$. Without opposite-valence information the bound is $A_C(M,\infty)$.
\end{lemma}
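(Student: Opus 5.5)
The plan is to mirror the proof of \Cref{lem:C2-symmetric}, replacing Zhao's monotonicity \eqref{eq:Zhao-monotonicity} by the upper-face monotonicity of \Cref{lem:upper-face-monotonicity}. The target coordinate is pinned at $x_1=C$, and every other coordinate lies in $[1,C]$ by \Cref{lem:GammaB}, since $1<\Gamma_{v(f)}(C)\le x_f\le B_{v(f)}(C)\le C$. So the point $(C,x_2,\dots,x_6)$ lies on the upper face where \Cref{lem:upper-face-monotonicity} applies.

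First I bound the four adjacent coordinates. Each adjacent quotient edge has valence $n_i\ge M$. I need $B_{n_i}(C)\le B_M(C)$, which means checking that $B_n(C)$ is nonincreasing in $n$. For $6\le n\le9$ it is constant, equal to $C$. For $n\ge10$ the map $c\mapsto(1-c)/(1+c)$ is decreasing, and $c_n$ increases with $n$. At the junction we have $B_{10}(C)<C$ by \Cref{lem:GammaB}. Hence $x_i\le B_M(C)$ for $i=2,3,5,6$.

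For the opposite coordinate, $x_4\ge\Gamma_m(C)$ holds directly when $m$ is the opposite valence. In the case $m=\infty$ we only use $x_4\ge1=\Gamma_\infty(C)$. One subtlety is that a quotient edge may occur several times in the tetrahedron, including as the target itself. This is harmless: the bounds are imposed per coordinate, and the target coordinate is fixed at $C$ regardless.

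By \Cref{lem:upper-face-monotonicity}, $\varphi$ is nondecreasing in the adjacent variables and nonincreasing in $x_4$. Therefore
\[
 \varphi_1(x)\le\varphi\bigl(C,B_M(C),B_M(C),\Gamma_m(C),B_M(C),B_M(C)\bigr)=q_C\bigl(B_M(C),\Gamma_m(C)\bigr).
\]
By \eqref{eq:symmetric-q} this value lies in $(-1,1)$. The implication \eqref{eq:angle-lower-principle} then gives $\alpha_1\ge\arccos q_C=A_C(M,m)$, where the clamp is inactive.

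I expect no serious obstacle. The only points needing care are the monotonicity of $B_n(C)$ in $n$ across the switch at $n=10$, and the check that the monotonicity lemma is applied on its stated domain $[1,C]^5$ with $x_1=C$ exactly.
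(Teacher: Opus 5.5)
Your proposal is correct and follows essentially the same route as the paper. Box membership, together with \Cref{lem:GammaB} and the monotonicity of $B_n(C)$ in $n$, gives $x_i\le B_M(C)$ and $x_4\ge\Gamma_m(C)$; then \Cref{lem:upper-face-monotonicity}, \eqref{eq:angle-lower-principle}, and the clamp removal via \eqref{eq:symmetric-q} finish the argument, and your explicit checks of the domain $[1,C]^5$, the switch at $n=10$, and repeated occurrences simply spell out what the paper compresses into a few lines.
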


\begin{proof}
If $n_i$ is the valence of the quotient edge represented by $e_i$, then $n_i\ge M$ for $i=2,3,5,6$. The function $B_n(C)$ is nonincreasing in $n$ by its formula and \Cref{lem:GammaB}. Hence
\[
 x_i\le B_{n_i}(C)\le B_M(C)\quad(i=2,3,5,6),
 \qquad x_4\ge\Gamma_m(C).
\]
Apply \Cref{lem:upper-face-monotonicity}, followed by \eqref{eq:angle-lower-principle}. The calculation preceding the lemma removes the clamp at the comparison point. These length inequalities use box membership, not valence alone.
\end{proof}

For later comparison, the \emph{symmetric star-sum test} is the condition
\begin{equation}\label{eq:symmetric-star-test}
 \sum_j A_C(M_j,m_j)>2\pi
\end{equation}
for every edge of valence 6, 7, 8, or 9 at one common $C$, where $M_j$ is a lower bound on all 4 adjacent valences and $m_j$ is the opposite valence. By \Cref{lem:occurrence-parametric}, this implies \Cref{cond:parametric-star}, hence geometricity by \Cref{thm:parametric-box}. It is a special case of \Cref{cor:combined-star}.

The following estimate extends the opposite-pair comparison in \cite[Eq.~(4.3) and proof of Lemma~4.2]{Zhao}, using \Cref{lem:upper-face-monotonicity}. Here $C$ is the common upper endpoint, $\delta$ is the opposite lower bound, and the pairwise bound $q=B_R(C)$ is determined by adjacent valences.

Fix $C\in(3/2,1+\sqrt2]$ and $q,\delta\in[1,C]$. For an edge orientation $(e_1,\ldots,e_6)$ and an assignment in $[1,C]^6$, suppose $x_1=C$, $x_4\ge\delta$, and
\begin{equation}\label{eq:pair-min-assumption}
 \min\{x_2,x_5\}\le q,
 \qquad
 \min\{x_3,x_6\}\le q.
\end{equation}
Define
\begin{align*}
 \Phi_{\mathrm{same}}(C,q,\delta)
 &=\varphi(C,q,q,\delta,C,C),\\
 \Phi_{\mathrm{cross}}(C,q,\delta)
 &=\varphi(C,q,C,\delta,C,q),\\
 \Phi_{\mathrm{pair}}(C,q,\delta)
 &=\max\{\Phi_{\mathrm{same}},\Phi_{\mathrm{cross}}\}.
\end{align*}
The two explicit formulas are
\begin{align}
 \Phi_{\mathrm{same}}
 &=\frac{-C^2\delta+2C^2q+C^2+\delta+q^2}
 {(q+1)(2C^2+q-1)},\label{eq:Phi-same}\\
 \Phi_{\mathrm{cross}}
 &=\frac{-C\delta+2Cq+\delta}
 {\sqrt{C+1}\sqrt{2C-1}\sqrt{C+2q^2-1}}.
 \label{eq:Phi-cross}
\end{align}

\begin{lemma}\label{lem:pair-min}
Under all the preceding domain assumptions, if \eqref{eq:pair-min-assumption} holds, then
\[
 \alpha_1\ge\arccos\bigl(\clamp_{[-1,1]}
 \Phi_{\mathrm{pair}}(C,q,\delta)\bigr).
\]
\end{lemma}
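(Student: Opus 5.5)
The approach is to reduce to finitely many monotone comparisons using \Cref{lem:upper-face-monotonicity}, and then conclude with \eqref{eq:angle-lower-principle}. Throughout, $x_1=C$ and all coordinates lie in $[1,C]$. By that lemma, $\varphi$ is nonincreasing in $x_4$, so we may replace $x_4$ by $\delta$ and only increase the cosine. Likewise $\varphi$ is nondecreasing in $x_2,x_3,x_5,x_6$, so any adjacent coordinate not constrained by \eqref{eq:pair-min-assumption} may be raised to $C$.

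Next we split into cases according to which member of each opposite adjacent pair attains the minimum. Hypothesis \eqref{eq:pair-min-assumption} lets us choose $i\in\{2,5\}$ and $j\in\{3,6\}$ with $x_i\le q$ and $x_j\le q$. Raise $x_i,x_j$ to $q$ and the other two adjacent coordinates to $C$; by monotonicity this gives
\[
 \varphi(x)\le\varphi(C,y_2,y_3,\delta,y_5,y_6),
\]
where $y_i=y_j=q$ and the remaining entries equal $C$. This leaves four configurations: $(i,j)\in\{(2,3),(5,6),(2,6),(5,3)\}$.

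We then identify these four configurations with the two functions in the statement. This uses the symmetries of $\varphi$ already recorded in the proof of \Cref{lem:uniform-endpoint}:
\[
 (x_2,x_3,x_5,x_6)\mapsto(x_3,x_2,x_6,x_5),\ (x_5,x_6,x_2,x_3),\ (x_6,x_5,x_3,x_2).
\]
These identities were stated with $x_4=C$, but they hold for arbitrary $x_4$, because they come from relabelling the tetrahedron while fixing the target edge and its opposite edge. One checks this directly in \eqref{eq:phi}: the numerator and both radicands are permuted among themselves. Under these symmetries, the case $(2,3)$ is $\Phi_{\mathrm{same}}$ and $(5,6)$ maps to it via the third swap. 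The case $(2,6)$ is $\Phi_{\mathrm{cross}}$, and $(5,3)$ maps to it via the second swap. In every case $\varphi(x)\le\Phi_{\mathrm{pair}}(C,q,\delta)$, and \eqref{eq:angle-lower-principle} gives the claimed angle bound.

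The main obstacle is the symmetry bookkeeping in the third step, namely confirming which transpositions of $\{2,3,5,6\}$ preserve $\varphi$ and that they carry the mixed cases onto the two representatives. I would first verify these symmetries directly on \eqref{eq:phi}. As a sanity check, I would also confirm the closed forms \eqref{eq:Phi-same} and \eqref{eq:Phi-cross} by substitution, although they are not needed for the inequality itself.
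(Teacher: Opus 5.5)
Your proposal is correct and follows essentially the same route as the paper. The paper also splits the feasible set into four boxes according to which member of each adjacent opposite pair is $q$-bounded. It then uses upper-face monotonicity to place the cosine maxima at the four corner points, and the relabelling symmetries of $\varphi$ to reduce these to $\Phi_{\mathrm{same}}$ and $\Phi_{\mathrm{cross}}$ before applying the angle principle. Your explicit check that these symmetries hold for arbitrary $x_4$, not only for $x_4=C$ as displayed earlier, is a detail the paper leaves implicit.
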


\begin{proof}
The feasible set is the union of the 4 boxes obtained by choosing one $q$-bounded edge from each pair $(e_2,e_5)$ and $(e_3,e_6)$. By \Cref{lem:upper-face-monotonicity}, their cosine maxima occur at
\[
 (C,q,q,\delta,C,C),\quad(C,q,C,\delta,C,q),\quad
 (C,C,q,\delta,q,C),\quad(C,C,C,\delta,q,q).
\]
The symmetries of $\varphi$ interchange the first and fourth points and the second and third points. Thus only \eqref{eq:Phi-same} and \eqref{eq:Phi-cross} remain. Taking the larger cosine gives a valid upper bound on all 4 boxes, and \eqref{eq:angle-lower-principle} proves the assertion. This argument does not depend on the target valence.
\end{proof}

Define the pairwise lower bound
\begin{equation*}
 \Theta_C(R,m)=\arccos\!\left(\clamp_{[-1,1]}
 \Phi_{\mathrm{pair}}(C,B_R(C),\Gamma_m(C))\right).
\end{equation*}
For a target occurrence with adjacent valences $n_2,n_3,n_5,n_6$ and opposite valence $m$, put
\[
 M=\min\{n_2,n_3,n_5,n_6\},\qquad
 R=\min\{\max(n_2,n_5),\max(n_3,n_6)\}.
\]
The first number is the strongest common bound on all adjacent valences; the second is the strongest threshold attained at least once in each adjacent opposite-edge pair. Within $Q_C$, an edge of valence $n\ge R$ has $x\le B_n(C)\le B_R(C)$. Thus \Cref{lem:pair-min} gives $\alpha_1\ge\Theta_C(R,m)$.

\begin{corollary}\label{cor:combined-star}\label{cor:parametric-star-sum}\label{cor:pair-min}
Assume $v(e)\ge6$ for every quotient edge $e\in E$. Suppose that one common $C\in(3/2,1+\sqrt2]$ has the following property. For every $e\in E$ of valence 6, 7, 8, or 9 and every occurrence $j$ of $e$, form $M_j,R_j,m_j$ as above, and assume
\begin{equation}\label{eq:combined-star}
 \sum_j\max\{A_C(M_j,m_j),\Theta_C(R_j,m_j)\}>2\pi.
\end{equation}
Then $(M,\mathcal T)$ is geometric. Its zero-curvature hyper-ideal metric is unique, and the extended combinatorial Ricci flow converges to it exponentially from any $l(0)\in\R_{>0}^E$.
\end{corollary}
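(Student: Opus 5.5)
The plan is to reduce \Cref{cor:combined-star} to \Cref{thm:parametric-box} by verifying \Cref{cond:parametric-star} at the fixed common $C$. Fix an edge $e$ with $v(e)\in\{6,7,8,9\}$ and an arbitrary admissible assignment: numbers $x_f$ for $f\in\mathcal E(\St(e))$ with $\Gamma_{v(f)}(C)\le x_f\le B_{v(f)}(C)$ and $x_e=C$. By \Cref{lem:GammaB} every $x_f$ lies in $[1,C]$. So at each occurrence $j$, with an edge orientation chosen so that the occurrence is $e_1$, the local six-tuple lies in $[1,C]^6$ with $x_1=C$. This places us in the domain of \Cref{lem:upper-face-monotonicity}, \Cref{lem:occurrence-parametric} and \Cref{lem:pair-min}.

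At occurrence $j$ we obtain two lower bounds for the angle and then take their maximum.
\begin{enumerate}[label=(\roman*)]
\item \Cref{lem:occurrence-parametric}, applied with $M=M_j$ and $m=m_j$, gives $\alpha_j\ge A_C(M_j,m_j)$. Here $m_j=v(e_4)$, so that $x_4\ge\Gamma_{m_j}(C)$.
\item For the pairwise bound, suppose without loss of generality that $R_j=\max(n_2,n_5)\le\max(n_3,n_6)$. Each pair $(e_2,e_5)$ and $(e_3,e_6)$ contains an edge of valence at least $R_j$. Since $B_n(C)$ is nonincreasing in $n$, that edge has cosh-length at most $B_{R_j}(C)$. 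This is hypothesis \eqref{eq:pair-min-assumption} with $q=B_{R_j}(C)$ and $\delta=\Gamma_{m_j}(C)$, and both lie in $[1,C]$ by \Cref{lem:GammaB}. \Cref{lem:pair-min} then gives $\alpha_j\ge\Theta_C(R_j,m_j)$.
\end{enumerate}
Together these give $\alpha_j\ge\max\{A_C(M_j,m_j),\Theta_C(R_j,m_j)\}$.

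Summing over the $v(e)$ occurrences and using \eqref{eq:combined-star} shows that the total angle at $e$ exceeds $2\pi$ for every admissible assignment, so \Cref{cond:parametric-star} holds. \Cref{thm:parametric-box} then yields geometricity, uniqueness, and exponential convergence from any $l(0)\in\R_{>0}^E$.

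The main point needing care is the bookkeeping in the pseudo-manifold setting. The same quotient edge may appear several times in one tetrahedron, and the target $e$ itself may occur as an adjacent or opposite edge. Each local edge must therefore receive the value $x_f$ of its quotient class, which is consistent precisely because \Cref{cond:parametric-star} assigns values to quotient edges. We also need $B_{v(e)}(C)=C$ whenever $e$ recurs as a neighbour. The valence inequalities $n_i\ge M_j$, $n_i\ge R_j$ and the opposite valence $m_j$ are read from the actual quotient classes, so the monotonicity of $B_n$ and $\Gamma_n$ gives the needed length bounds regardless of repetition. The only other thing to check is that the sign conventions of the two lemmas match, namely that the four adjacent variables are bounded above and the opposite variable is bounded below; both lemmas are stated exactly in that form.
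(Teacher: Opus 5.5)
Your proposal is correct and follows the paper's proof. The paper likewise applies \Cref{lem:occurrence-parametric} and \Cref{lem:pair-min} at every admissible upper-face assignment of the star box, takes the larger bound at each occurrence, sums to verify \Cref{cond:parametric-star}, and invokes \Cref{thm:parametric-box}. Your bookkeeping (every local edge takes the value of its quotient class, and $B_{v(e)}(C)=C$ covers recurrences of the target edge) spells out what the paper leaves implicit.
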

\begin{proof}
For every admissible assignment on the upper face of the star box, both \Cref{lem:occurrence-parametric} and \Cref{lem:pair-min} apply. Therefore each actual angle is at least the maximum in \eqref{eq:combined-star}. Summation proves \Cref{cond:parametric-star}, and \Cref{thm:parametric-box} applies. The case using only $A_C$ is exactly \eqref{eq:symmetric-star-test}; pair-min estimates may instead be used for any or all occurrences.
\end{proof}

This corollary combines the two kinds of local information without requiring the same type of estimate at every occurrence. It gives a computable sufficient condition for \Cref{thm:parametric-box}.

\subsection{Explicit valence criteria at a fixed parameter}\label{subsec:fixed-parameter-corollaries}

Throughout this subsection, we specialize \Cref{cor:combined-star} to
\[
 C=C_0:=\frac{11}{5}.
\]
The tables below give occurrence-wise and mixed star conditions. \Cref{cor:C0-explicit} combines them in a single criterion at the end of this subsection.

Table~\ref{tab:C0-simple} gives a common adjacent-valence threshold without using the opposite valence. Table~\ref{tab:C0-refined} uses the opposite valence $m$. In both tables, $M$ is a lower bound on all 4 adjacent valences at the target occurrence. ``No additional restriction'' means $M=6$. The same rational parameter $C_0$ must be used at every edge.

\begin{table}[htbp]
\centering
\caption{Uniform non-mixed conditions at $C_0=11/5$, compared with $C=2$. The entries in both threshold columns are lower bounds on all 4 adjacent valences; no opposite data are used.}
\label{tab:C0-simple}
\small
\begin{tabular}{cccc}
\toprule
Target valence $n$ & $M$ at $C=2$ & $M$ at $C_0$ & $A_{C_0}(M,\infty)$\\
\midrule
6 & $\ge19$ & $\ge17$ & $>1.049387>\pi/3$\\
7 & $\ge14$ & $\ge13$ & $>0.907613>2\pi/7$\\
8 & $\ge11$ & $\ge11$ & $>0.799029>\pi/4$\\
9 & $\ge10$ & $\ge10$ & $>0.731697>2\pi/9$\\
\bottomrule
\end{tabular}
\end{table}

\begin{table}[htbp]
\centering
\caption{Refined non-mixed conditions at $C_0=11/5$. The last row for each target valence also covers $m=\infty$. These are the smallest integer adjacent thresholds certified by $A_{C_0}(M,m)>2\pi/n$ on the indicated ranges.}
\label{tab:C0-refined}
\small
\begin{tabular}{ccc}
\toprule
Target valence $n$ & Opposite valence $m$ & Required adjacent valence $M$\\
\midrule
6 & $6\le m\le7$ & $\ge15$\\
6 & $8\le m\le11$ & $\ge16$\\
6 & $m\ge12$ & $\ge17$\\
7 & $6\le m\le8$ & $\ge12$\\
7 & $m\ge9$ & $\ge13$\\
8 & $m=6$ & $\ge10$\\
8 & $m\ge7$ & $\ge11$\\
9 & $6\le m\le10$ & no additional restriction\\
9 & $m\ge11$ & $\ge10$\\
\bottomrule
\end{tabular}
\end{table}

The improvement over $C=2$ in Table~\ref{tab:C0-simple} concerns target valences 6 and 7 and uses no mixed angle sum. It is realized by \Cref{ex:U6,ex:U7}. The parameter $C_0$ is not uniformly better: for example,
\[
 A_{C_0}(6,11)<0.695244<\frac{2\pi}{9}
 <0.699193<A_2(6,11).
\]
Thus a valence-9 occurrence with opposite valence 11 needs an additional adjacent restriction at $C_0$, although it does not at $C=2$. The columns for different parameters cannot be combined edge by edge in a single application of the invariant-box theorem.

Table~\ref{tab:C0-occurrence} gives selected rigorous lower bounds for
\[
 A_{C_0}(M,m)=\arccos\varphi(C_0,B_M(C_0),B_M(C_0),\Gamma_m(C_0),B_M(C_0),B_M(C_0)).
\]
The symbol $m=\infty$ means that no opposite-edge valence is used, so $x_4\ge1$ is the only lower bound.  Each displayed decimal is rounded downward from an outward-rounded interval enclosure.

\begin{table}[!t]
\centering
\caption{Occurrence angle lower bounds at $C_0=11/5$. The row label $M$ is a lower bound on all 4 adjacent valences; $M=6$ means no additional restriction. The opposite valence is $m$. Both panels use the same rows.}
\label{tab:C0-occurrence}
\small
\setlength{\tabcolsep}{5pt}
\renewcommand{\arraystretch}{1.04}
\textit{(a) Smaller opposite valences}\par\smallskip
\begin{tabular}{@{}ccccccc@{}}
\toprule
$M$ & $B_M(C_0)$ & $m=6$ & $7$ & $8$ & $9$ & $10$\\
\midrule
6 & 2.200000 & 0.739861 & 0.722739 & 0.711800 & 0.704381 & 0.699116\\
10 & 2.021944 & 0.800090 & 0.781426 & 0.769507 & 0.761427 & 0.755693\\
11 & 1.834575 & 0.874518 & 0.853895 & 0.840734 & 0.831815 & 0.825490\\
12 & 1.694992 & 0.939128 & 0.916750 & 0.902479 & 0.892813 & 0.885958\\
13 & 1.588072 & 0.995065 & 0.971123 & 0.955865 & 0.945534 & 0.938210\\
14 & 1.504280 & 1.043479 & 1.018147 & 1.002012 & 0.991092 & 0.983353\\
15 & 1.437345 & 1.085437 & 1.058869 & 1.041957 & 1.030515 & 1.022408\\
16 & 1.383000 & 1.121886 & 1.094220 & 1.076618 & 1.064714 & 1.056281\\
17 & 1.338255 & 1.153643 & 1.125001 & 1.106788 & 1.094474 & 1.085752\\
18 & 1.300962 & 1.181410 & 1.151898 & 1.133140 & 1.120461 & 1.111484\\
\bottomrule
\end{tabular}
\par\medskip
\textit{(b) Larger opposite valences}\par\smallskip
\begin{tabular}{@{}ccccccc@{}}
\toprule
$M$ & $m=11$ & $12$ & $14$ & $21$ & $22$ & $\infty$\\
\midrule
6 & 0.695243 & 0.692311 & 0.688236 & 0.682011 & 0.681571 & 0.677069\\
10 & 0.751477 & 0.748284 & 0.743849 & 0.737074 & 0.736595 & 0.731697\\
11 & 0.820838 & 0.817316 & 0.812426 & 0.804956 & 0.804428 & 0.799029\\
12 & 0.880919 & 0.877105 & 0.871808 & 0.863721 & 0.863149 & 0.857305\\
13 & 0.932827 & 0.928754 & 0.923097 & 0.914462 & 0.913851 & 0.907613\\
14 & 0.977665 & 0.973362 & 0.967387 & 0.958268 & 0.957624 & 0.951037\\
15 & 1.016452 & 1.011945 & 1.005689 & 0.996143 & 0.995468 & 0.988574\\
16 & 1.050086 & 1.045400 & 1.038895 & 1.028971 & 1.028270 & 1.021104\\
17 & 1.079346 & 1.074500 & 1.067775 & 1.057517 & 1.056792 & 1.049387\\
18 & 1.104890 & 1.099903 & 1.092983 & 1.082428 & 1.081682 & 1.074064\\
\bottomrule
\end{tabular}
\end{table}

The table immediately produces mixed edge-star criteria, some of which are displayed in Table~\ref{tab:C0-patterns}.  These are concrete combinatorial hypotheses: one records, occurrence by occurrence, a lower bound for the 4 adjacent valences and optionally the opposite-edge valence.

\begin{table}[!htbp]
\centering
\caption{Sufficient edge-star patterns at $C_0=11/5$. The notation $k\times(M\ge r)$ means $k$ occurrences whose 4 adjacent valences are at least $r$. Unless specified otherwise, $m=\infty$ (no opposite information).}
\label{tab:C0-patterns}
\small
\setlength{\tabcolsep}{4pt}
\renewcommand{\arraystretch}{1.12}
\begin{tabularx}{\textwidth}{@{}c X l@{}}
\toprule
$v(e)$ & Edge-star pattern & Lower bound for the angle sum\\
\midrule
6 & $3\times(M\ge16)$ + $3\times(M\ge18)$ & \begin{tabular}[t]{@{}l@{}}$3(1.021104)+3(1.074064)$\\$=6.285504>2\pi$\end{tabular}\\[2pt]
7 & $3\times(M\ge12)$ + $4\times(M\ge14)$ & \begin{tabular}[t]{@{}l@{}}$3(0.857305)+4(0.951037)$\\$=6.376063>2\pi$\end{tabular}\\[2pt]
8 & $6\times(M\ge10)$ + $2\times(M\ge14)$ & \begin{tabular}[t]{@{}l@{}}$6(0.731697)+2(0.951037)$\\$=6.292256>2\pi$\end{tabular}\\[2pt]
9 & $4\times(M\ge10)$ + $5\times(\mathrm{unrestricted})$ & \begin{tabular}[t]{@{}l@{}}$4(0.731697)+5(0.677069)$\\$=6.312133>2\pi$\end{tabular}\\[2pt]
\bottomrule
\end{tabularx}
\end{table}

These bounds apply on the upper face of $Q_C$ under the hypotheses of \Cref{lem:occurrence-parametric}. The first two rows genuinely use summation: $A_{C_0}(16,\infty)<\pi/3$ and $A_{C_0}(12,\infty)<2\pi/7$, so the weaker occurrences do not individually meet the target threshold. The corresponding coarse sums at $C=2$ satisfy
\[
\begin{aligned}
 3A_2(16,\infty)+3A_2(18,\infty)&<6.048760<2\pi,\\
 3A_2(12,\infty)+4A_2(14,\infty)&<6.236233<2\pi.
\end{aligned}
\]
These comparisons concern the specified lower-bound patterns, not failure of every $C=2$ criterion for a triangulation satisfying them. Neither row requires a unique low-adjacent-valence occurrence, which would be excluded by \Cref{lem:adjacency-propagation}.

Table~\ref{tab:pair-values} records $\Theta_{C_0}(R,m)$, rounded downward. For each target occurrence on the upper face $x_e=C_0$ of $Q_{C_0}$, this is a lower angle bound obtained from its adjacent and opposite valence data. A sufficient one-occurrence condition for target valence $n$ is $\Theta_{C_0}(R,m)>2\pi/n$; the required threshold therefore depends on $n$.

\begin{table}[!htbp]
\centering
\caption{Selected pair-min angle lower bounds $\Theta_{C_0}(R,m)$ at $C_0=11/5$. Each adjacent opposite-edge pair contains an edge of valence at least $R$. Both panels use the same rows; $R=6$ is unrestricted and agrees with the $M=6$ row of Table~\ref{tab:C0-occurrence}.}
\label{tab:pair-values}
\small
\setlength{\tabcolsep}{5pt}
\renewcommand{\arraystretch}{1.04}
\textit{(a) Smaller opposite valences}\par\smallskip
\begin{tabular}{@{}ccccccc@{}}
\toprule
$R$ & $m=6$ & $7$ & $8$ & $9$ & $10$ & $11$\\
\midrule
6 & 0.739861 & 0.722739 & 0.711800 & 0.704381 & 0.699116 & 0.695243\\
10 & 0.769078 & 0.751212 & 0.739800 & 0.732063 & 0.726572 & 0.722533\\
11 & 0.802931 & 0.784193 & 0.772227 & 0.764115 & 0.758360 & 0.754127\\
12 & 0.830560 & 0.811102 & 0.798679 & 0.790258 & 0.784284 & 0.779891\\
13 & 0.853322 & 0.833263 & 0.820460 & 0.811783 & 0.805627 & 0.801101\\
14 & 0.872243 & 0.851680 & 0.838558 & 0.829666 & 0.823358 & 0.818720\\
15 & 0.888105 & 0.867117 & 0.853725 & 0.844651 & 0.838215 & 0.833483\\
16 & 0.901509 & 0.880160 & 0.866539 & 0.857310 & 0.850765 & 0.845953\\
18 & 0.922709 & 0.900782 & 0.886796 & 0.877322 & 0.870603 & 0.865663\\
23 & 0.955478 & 0.932647 & 0.918090 & 0.908232 & 0.901241 & 0.896102\\
30 & 0.978120 & 0.954657 & 0.939700 & 0.929573 & 0.922393 & 0.917115\\
34 & 0.985500 & 0.961828 & 0.946741 & 0.936525 & 0.929282 & 0.923959\\
\bottomrule
\end{tabular}
\par\medskip
\textit{(b) Larger opposite valences}\par\smallskip
\begin{tabular}{@{}cccccc@{}}
\toprule
$R$ & $m=12$ & $14$ & $21$ & $22$ & $\infty$\\
\midrule
6 & 0.692311 & 0.688236 & 0.682011 & 0.681571 & 0.677069\\
10 & 0.719475 & 0.715226 & 0.708736 & 0.708277 & 0.703584\\
11 & 0.750922 & 0.746470 & 0.739668 & 0.739187 & 0.734270\\
12 & 0.776565 & 0.771945 & 0.764887 & 0.764388 & 0.759286\\
13 & 0.797674 & 0.792914 & 0.785644 & 0.785130 & 0.779875\\
14 & 0.815209 & 0.810332 & 0.802884 & 0.802357 & 0.796974\\
15 & 0.829901 & 0.824925 & 0.817327 & 0.816789 & 0.811298\\
16 & 0.842310 & 0.837251 & 0.829525 & 0.828979 & 0.823395\\
18 & 0.861924 & 0.856732 & 0.848803 & 0.848242 & 0.842512\\
23 & 0.892213 & 0.886812 & 0.878565 & 0.877982 & 0.872024\\
30 & 0.913120 & 0.907573 & 0.899105 & 0.898507 & 0.892389\\
34 & 0.919930 & 0.914335 & 0.905795 & 0.905191 & 0.899020\\
\bottomrule
\end{tabular}
\end{table}

Since $B_6(C)=C$, the two pair-min comparison points coincide when $R=6$, giving $\Theta_C(6,m)=A_C(\un,m)$. For fixed $C_0$, $\Theta_{C_0}(R,m)$ is nondecreasing in $R$ and nonincreasing in $m$: increasing $R$ decreases the pairwise upper bound, whereas increasing $m$ decreases the opposite lower bound. Table~\ref{tab:pair-criterion} gives sufficient conditions for targets of valence 7, 8, and 9, using the largest $m$ in each indicated range (or $m=\infty$). The valence-7 rows use selected common thresholds on ranges, not a pointwise-optimal classification for every $m$.

\begin{table}[!htbp]
\centering
\caption{Pair-min sufficient conditions at $C_0=11/5$. Each pair $(e_2,e_5)$ and $(e_3,e_6)$ must contain an edge of valence at least $R$. The final column gives $\Theta_{C_0}(R,m)>d>2\pi/n$ throughout the indicated range.}
\label{tab:pair-criterion}
\small
\setlength{\tabcolsep}{4pt}
\renewcommand{\arraystretch}{1.08}
\begin{tabular}{@{}clcl@{}}
\toprule
Target $n$ & Opposite valence $m$ & Required $R$ & Angle lower bound\\
\midrule
7 & $m=6$ & $\ge16$ & $>0.901509>2\pi/7$\\
7 & $m=7$ & $\ge18$ & $>0.900782>2\pi/7$\\
7 & $8\le m\le10$ & $\ge23$ & $>0.901241>2\pi/7$\\
7 & $11\le m\le21$ & $\ge30$ & $>0.899105>2\pi/7$\\
7 & $m\ge22$ or no information & $\ge34$ & $>0.899020>2\pi/7$\\
\midrule
8 & $m=6$ & $\ge11$ & $>0.802931>\pi/4$\\
8 & $7\le m\le9$ & $\ge12$ & $>0.790258>\pi/4$\\
8 & $10\le m\le21$ & $\ge13$ & $>0.785644>\pi/4$\\
8 & $m\ge22$ or no information & $\ge14$ & $>0.796974>\pi/4$\\
\midrule
9 & $6\le m\le10$ & $6$ (unrestricted) & $>0.699116>2\pi/9$\\
9 & $m\ge11$ or no information & $\ge10$ & $>0.703584>2\pi/9$\\
\bottomrule
\end{tabular}
\end{table}

There is no pure pair-min sufficient row for valence 6 at $C_0$. For every $R\ge6$ and $m\ge6$, also allowing $m=\infty$, one has $B_R(C_0)\ge1$ and $\Gamma_m(C_0)\le\Gamma_6(C_0)=37/27$. By upper-face monotonicity,
\[
\begin{aligned}
 \Phi_{\mathrm{pair}}(C_0,B_R(C_0),\Gamma_m(C_0))
 &\ge\Phi_{\mathrm{same}}(C_0,1,37/27)\\
 &=\frac{577}{1089}>\frac12.
\end{aligned}
\]
Consequently $\Theta_{C_0}(R,m)\le\arccos(577/1089)<\pi/3$. Even a mixed sum of 6 such pair-min bounds is below $2\pi$. This limits the present pair-min estimate, not geometricity; symmetric bounds, or the unified maximum in \Cref{cor:combined-star}, can still certify valence-6 edges.

In the second valence-9 row, replacing the threshold 10 by 9 gives no improvement over the unrestricted adjacent bound, because $B_9(C)=C$.

The choice $C_0=11/5$ allows a valence-7 pair-min criterion without opposite-edge information that is unavailable at $C=2$. Indeed, for every $R\ge6$,
\[
 \Theta_2(R,\infty)\le\arccos\frac58<\frac{2\pi}{7}
 <\Theta_{C_0}(34,\infty).
\]
The first inequality follows from
$\Phi_{\mathrm{pair}}(2,B_R(2),1)\ge\Phi_{\mathrm{same}}(2,1,1)=5/8$,
using $B_R(2)\ge1$ and upper-face monotonicity. The middle comparison follows from the cubic for $\cos(2\pi/7)$ used in Section~\ref{subsec:bichromatic}, which is increasing on $[1/2,\infty)$ and has value $1/64>0$ at $5/8$. The last follows from Table~\ref{tab:pair-criterion}.

Pair-min bounds can also be summed over an entire edge star. Table~\ref{tab:C0-pair-patterns} lists sufficient patterns; the groups in each row partition the occurrences. A value in the weaker group may be below $2\pi/v(e)$, but the total lower bound exceeds $2\pi$. The third row is realized by \Cref{ex:mixed-pair-eight}; no realization is asserted for the other rows.

\begin{table}[!htbp]
\centering\small
\caption{Mixed pair-min sufficient patterns at $C_0=11/5$. The condition $R\ge r$ requires an edge of valence at least $r$ in each of the two adjacent opposite-edge pairs. Unless stated otherwise, no opposite-edge information is used.}
\label{tab:C0-pair-patterns}
\setlength{\tabcolsep}{4pt}
\renewcommand{\arraystretch}{1.12}
\begin{tabularx}{\textwidth}{@{}c X l@{}}
\toprule
$v(e)$ & Edge-star pattern & Strict lower bound for the angle sum\\
\midrule
7 & $2\times(R\ge18)+5\times(R\ge30)$, all with $m\le10$
 & \begin{tabular}[t]{@{}l@{}}$2(0.870603)+5(0.922393)$\\$=6.353171>2\pi$\end{tabular}\\[2pt]
8 & $2\times(R\ge12)+6\times(R\ge14)$
 & \begin{tabular}[t]{@{}l@{}}$2(0.759286)+6(0.796974)$\\$=6.300416>2\pi$\end{tabular}\\[2pt]
8 & All $R\ge13$; at least 4 occurrences have $m\le14$
 & \begin{tabular}[t]{@{}l@{}}$4(0.779875)+4(0.792914)$\\$=6.291156>2\pi$\end{tabular}\\[2pt]
9 & $4\times(\mathrm{unrestricted})+5\times(R\ge11)$
 & \begin{tabular}[t]{@{}l@{}}$4(0.677069)+5(0.734270)$\\$=6.379626>2\pi$\end{tabular}\\
\bottomrule
\end{tabularx}
\end{table}

\Needspace{8\baselineskip}
\begin{corollary}\label{cor:C0-explicit}
Assume $v(e)\ge6$ for every quotient edge $e\in E$. At every edge of valence $n\in\{6,7,8,9\}$, suppose either that each occurrence satisfies a $C_0$ row of Table~\ref{tab:C0-simple}, a row of Table~\ref{tab:C0-refined}, or a pair-min row of Table~\ref{tab:pair-criterion}, or that its star satisfies a mixed pattern in Table~\ref{tab:C0-patterns} or Table~\ref{tab:C0-pair-patterns}. All rows must have target valence $n$; the occurrence-wise rows and types of estimate may vary between occurrences. Then $(M,\mathcal T)$ is geometric. Its zero-curvature hyper-ideal metric is unique, and the extended combinatorial Ricci flow converges to it exponentially from any positive initial length vector.
\end{corollary}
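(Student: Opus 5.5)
The plan is to deduce \Cref{cor:C0-explicit} from \Cref{cor:combined-star} with the single parameter $C=C_0=11/5$, which lies in $(3/2,1+\sqrt2]$ because $1+\sqrt2>2.414>2.2$. It therefore suffices to verify \eqref{eq:combined-star} at every edge $e$ of valence $n\in\{6,7,8,9\}$. Since each summand there is a maximum of $A_{C_0}(M_j,m_j)$ and $\Theta_{C_0}(R_j,m_j)$, each occurrence may be bounded below by whichever of the two estimates its hypothesis supplies. This is exactly the freedom the statement allows in varying the row type from occurrence to occurrence.

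\emph{Occurrence-wise rows.} Suppose occurrence $j$ satisfies a row of Table~\ref{tab:C0-simple} or Table~\ref{tab:C0-refined} with threshold $M$ and opposite range for $m$. Then $M_j\ge M$, and $m_j$ lies in the stated range, where ``no information'' corresponds to $m=\infty$. Two monotonicity facts reduce such a row to its worst endpoint. First, $B_M(C_0)$ is nonincreasing in $M$ by \Cref{lem:GammaB} and its formula. Second, $\Gamma_m(C_0)$ is nonincreasing in $m$ with limit $\Gamma_\infty=1$. By \eqref{eq:symmetric-q}, $q_{C_0}(p,\delta)$ is increasing in $p$ and decreasing in $\delta$, so $A_{C_0}(M_j,m_j)\ge A_{C_0}(M,m_{\max})$. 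Here $m_{\max}$ is the largest $m$ in the range, or $\infty$.

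Pair-min rows of Table~\ref{tab:pair-criterion} are handled the same way. There $R_j\ge R$ gives $B_{R_j}\le B_R$. Upper-face monotonicity (\Cref{lem:upper-face-monotonicity}) makes $\Phi_{\mathrm{same}}$ and $\Phi_{\mathrm{cross}}$ nondecreasing in $q$ and nonincreasing in $\delta$, so $\Theta_{C_0}(R_j,m_j)\ge\Theta_{C_0}(R,m_{\max})$. In every case the tabulated endpoint value exceeds $2\pi/n$. Since each of the $n$ occurrences contributes more than $2\pi/n$, the sum exceeds $2\pi$.

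\emph{Mixed patterns.} For a star satisfying a row of Table~\ref{tab:C0-patterns} or Table~\ref{tab:C0-pair-patterns}, the occurrences are partitioned into the listed groups. I bound each group by its endpoint value from Table~\ref{tab:C0-occurrence} or Table~\ref{tab:pair-values}, using the same monotonicity reduction, and add the results. The displayed totals exceed $2\pi$ because $2\pi<6.2832$.

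Two rows need extra care. In the third row of Table~\ref{tab:C0-pair-patterns}, I use $\Theta_{C_0}(13,14)$ on the at least four occurrences with $m\le14$ and $\Theta_{C_0}(13,\infty)$ on the remaining ones. Because $\Theta_{C_0}(13,14)\ge\Theta_{C_0}(13,\infty)$, taking more than four occurrences in the first group only increases the bound. The first row of Table~\ref{tab:C0-pair-patterns} uses the $m=10$ column for both groups.

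\emph{Main obstacle.} The delicate step is certifying the finitely many numerical inequalities rigorously. Every table entry must be a genuine lower bound for $\arccos$ of an algebraic quantity, obtained by downward rounding of an outward-rounded interval enclosure as in Appendix~\ref{app:numerics}. The pattern sums exceed $2\pi$ by margins as small as roughly $0.002$, so this rounding has to be done carefully. I would also check that no clamp is active. This holds for $A_{C_0}$ by the computation after \eqref{eq:symmetric-q}; for $\Theta_{C_0}$, the clamp can only increase the angle bound via \eqref{eq:angle-lower-principle}, so it does no harm. Once these checks are in place, \Cref{cor:combined-star} yields geometricity, uniqueness and exponential convergence.
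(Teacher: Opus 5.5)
Your proposal is correct and follows essentially the paper's own proof. You apply \Cref{cor:combined-star} at the single parameter $C_0=11/5$, reduce each occurrence-wise row to its worst endpoint using the monotonicity of $A_{C_0}$ and $\Theta_{C_0}$ in the valence data, and sum the downward-rounded table values for the mixed patterns, treating the third row of Table~\ref{tab:C0-pair-patterns} exactly as the paper does (four occurrences with $m\le14$, opposite data ignored at the rest). Your remark that the clamp ``can only increase'' the bound is loosely phrased, but the clamp is already built into the definition of $\Theta_{C_0}$ and \Cref{lem:pair-min} and is inactive at the tabulated values, so the argument is unaffected.
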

\begin{proof}
Use the common parameter $C_0=11/5$ at every edge. For the occurrence-wise conditions, the symmetric rows in Tables~\ref{tab:C0-simple} and~\ref{tab:C0-refined} give $A_{C_0}(M,m)>2\pi/n$, and the pair-min rows in Table~\ref{tab:pair-criterion} give $\Theta_{C_0}(R,m)>2\pi/n$. Monotonicity reduces each opposite-valence range to its largest endpoint, with $\Gamma_\infty=1$ for an unbounded range. The strict comparisons, and the integer thresholds asserted in Table~\ref{tab:C0-refined}, are verified in Appendix~\ref{app:numerics}. For an unrestricted row, $\Theta_{C_0}(6,m)=A_{C_0}(\un,m)$.

For a mixed star, the downward-rounded values in Tables~\ref{tab:C0-occurrence} and~\ref{tab:pair-values} give the displayed sums in Tables~\ref{tab:C0-patterns} and~\ref{tab:C0-pair-patterns}, all strictly larger than $2\pi$. In the third row of Table~\ref{tab:C0-pair-patterns}, choose four occurrences with $m\le14$ and ignore opposite data at the other four. Thus in either case the sum in \eqref{eq:combined-star} exceeds $2\pi$ at every low-valence edge. Apply \Cref{cor:combined-star}.
\end{proof}

\subsection{Choosing the common parameter \texorpdfstring{$C$}{C}}\label{subsec:parameter-choice}

For a single occurrence with all 4 adjacent valences at least $M\ge10$, and without opposite-edge information, set
\[
 a_M=\frac{2(1-\cos(2\pi/M))}{1+\cos(2\pi/M)},
 \qquad B_M(C)=1+a_MC^2.
\]
By \Cref{lem:GammaB}, $B_M(C)<C$ throughout the allowed interval. The symmetric lower angle is
\[
 A_M(C)=\arccos\varphi(C,B_M(C),B_M(C),1,B_M(C),B_M(C)).
\]
Writing
\[
 r_M(C)=\frac{C-1}{2(1+a_MC^2)^2},
\]
one has $\varphi=(1-r_M)/(1+r_M)$.  Hence maximizing $A_M$ is equivalent to maximizing $r_M$.  For $C>1$, the unique critical point is
\begin{equation*}
 C_M^*=\frac{2+\sqrt{4+3/a_M}}{3},
\end{equation*}
since
\[
 r_M'(C)=\frac{1+4a_MC-3a_MC^2}{2(1+a_MC^2)^3}.
\]
The derivative changes from positive to negative at $C_M^*$. Since $a_M\le a_{10}<2/9<1/4$, we have $C_M^*>2$. Hence the maximum on $(3/2,1+\sqrt2]$ is attained at $\min\{C_M^*,1+\sqrt2\}$; the excluded lower endpoint causes no difficulty. For $6\le M\le9$, the adjacent upper bound is $B_M(C)=C$.

For a mixed star one maximizes a sum of angle bounds with a single common $C$; one cannot maximize each summand at a different parameter and then add them. Numerical 1-dimensional exploration can suggest a useful rational $C$. The proof needs only a rigorous interval check that all required star sums at this fixed $C$ exceed $2\pi$, not a proof that the chosen $C$ is a global maximizer.

\begin{remark}\label{rem:no-uniform-v8}
With no adjacent-edge information, the best symmetric one-occurrence estimate over $1<C<3$ is attained at $C=2$:
\[
 \max_{1<C<3}\arccos\varphi(C,C,C,1,C,C)
 =\arccos\frac79<\frac{2\pi}{9}<\frac\pi4.
\]
Consequently, the present one-step box estimate cannot prove geometricity for all triangulations of minimum valence 8 without additional local information.  This identifies a limitation of this particular bound, not an obstruction to geometricity. A proof covering the uniform valence-8 case would need additional length or angle information, perhaps obtained by repeatedly improving earlier bounds, or a different global argument.
\end{remark}

\subsection{Examples}\label{subsec:parametric-examples}

All triangulations in this subsection are geometric. Lists of quotient-edge valences follow the first-occurrence label order in Appendix~\ref{app:examples}. The fixed-parameter criterion \Cref{cor:C0-explicit} applies to $\mathcal U_6$, $\mathcal U_7$, $\mathcal T_5$, $\mathcal P$, and $\Pmix$, using the indicated occurrence-wise or mixed rows. We apply \Cref{cor:combined-star} to $\mathcal T_{23}$ and $\mathcal W_{23}$ at their specified common parameters. Complete face-pairing data are given in Appendix~\ref{app:examples}.

\begin{example}\label{ex:U6}
The triangulation $\mathcal U_6$ in Appendix~\ref{data:U6} has 7 tetrahedra, quotient-edge valences $(6,17,19)$, and connected vertex link of genus 5. Its unique low-valence edge $a$ has occurrences $T_0^{01},\ldots,T_5^{01}$. All 6 adjacent minima are 17, and every opposite valence is 17 or 19. A single bound, without opposite information, gives
\[
 6A_{11/5}(17,\infty)>6(1.049387)=6.296322>2\pi.
\]
Thus $\mathcal U_6$ is geometric by \Cref{cor:C0-explicit}; this is not a mixed-pattern argument. At $C=2$, Table~\ref{tab:C2-refined} requires adjacent minimum at least 19 for these opposite valences, so both $C=2$ tables fail.

The stronger individual $C=2$ test also fails. Put
\[
 J(r,s,m,u,v)=\arccos\varphi(2,b_r,b_s,\gamma_m,b_u,b_v).
\]
Using the angle-coordinate order $(01,02,03,23,13,12)$, its sum at $a$ is
\begin{align*}
 S_2(a)={}&J(17,19,17,17,17)+J(19,19,17,17,17)\\
 &+J(19,19,19,19,17)+J(19,19,19,17,19)\\
 &+J(19,17,19,17,17)+J(17,17,17,17,17)\\
 &\in[6.224059056270,\,6.224059056271]\subset(-\infty,2\pi).
\end{align*}
Hence \Cref{thm:C2-star} does not certify this triangulation. The local bichromatic theorem does not apply because there is an edge of valence 6.
\end{example}

\begin{example}\label{ex:U7}
The triangulation $\mathcal U_7$ in Appendix~\ref{data:U7} has 11 tetrahedra, quotient-edge valences $(7,13,13,20,13)$, and connected vertex link of genus 7. Its unique low-valence edge $a$ has occurrences $T_0^{01},\ldots,T_6^{01}$. At every one of them, the valence vector in the order $(01,02,03,23,13,12)$ is
\[
 (7,13,13,20,13,13).
\]
Consequently a single uniform bound proves
\[
 7A_{11/5}(13,\infty)>7(0.907613)=6.353291>2\pi,
\]
and \Cref{cor:C0-explicit} makes $\mathcal U_7$ geometric. At $C=2$, both tables require adjacent minimum at least 14 for target valence 7 and opposite valence 20. Moreover, all adjacent valences are equal, so even the individual endpoint test has sum
\[
 7A_2(13,20)\in[6.272962862500,\,6.272962862501]\subset(-\infty,2\pi).
\]
Thus \Cref{thm:C2-star} also fails to certify it. The tetrahedron $T_0$ uses the 4 quotient classes $0,1,2,3$ and meets $a$, violating the local bichromatic condition. This example separates the non-mixed $C_0$ criterion from the individual $C=2$ criterion as well as from both tables and the local bichromatic theorem.
\end{example}

\begin{example}\label{ex:table5}
The triangulation $\mathcal T_5$ in Appendix~\ref{data:T5} has 14 tetrahedra, valences $(52,10,14,8)$, and vertex-link genus 11. Its unique valence-8 edge has 6 occurrences with adjacent minimum 10 and 2 with adjacent minimum 14; every opposite edge has valence 52. Ignoring that opposite information, the valence-8 row of Table~\ref{tab:C0-patterns} gives
\[
 6A_{11/5}(10,\infty)+2A_{11/5}(14,\infty)>6.292256>2\pi.
\]
There are no valence-9 edges, so $\mathcal T_5$ is geometric by \Cref{cor:C0-explicit}. The corresponding symmetric sum at $C=2$ satisfies
\[
 6A_2(10,\infty)+2A_2(14,\infty)<6.244488<2\pi.
\]
Thus this fixed combinatorial pattern distinguishes the two symmetric parameter choices. It fails Tables~\ref{tab:C2-simple} and~\ref{tab:C2-refined} and the local bichromatic hypothesis. This comparison does not assert failure of the stronger individual-edge test \Cref{thm:C2-star} or of a full static-box optimization.
\end{example}

For a further parameter comparison, define
\[
 F(C)=4A_C(\un,\infty)+2A_C(12,\infty)
       +A_C(13,\infty)+A_C(14,\infty).
\]
The certified comparisons are
\begin{equation}\label{eq:nominal-switch}
 F(11/5)<6.281544<2\pi<6.286890<F(23/10).
\end{equation}
The upper and lower bounds use the corresponding endpoints of rigorous 6-decimal enclosures at the fixed rational parameters $11/5$ and $23/10$.

\begin{example}\label{ex:parameter-switch}
The triangulation $\mathcal T_{23}$ in Appendix~\ref{data:T23} has 10 tetrahedra, valences $(13,13,8,14,12)$, and vertex-link genus 6. Its valence-8 adjacent minima are
\[
 (8,8,8,8,12,12,13,14).
\]
It therefore realizes the pattern in \eqref{eq:nominal-switch}, so \Cref{cor:combined-star}, using only its symmetric bounds at $C=23/10$, proves that $\mathcal T_{23}$ is geometric. Its actual opposite valences are 12, 13, or 14. Using those data instead of $m=\infty$, the same corollary also proves geometricity at $C=11/5$. Thus this example separates the coarse patterns, not all information available at $11/5$.

A stronger actual-data comparison is furnished by $\mathcal W_{23}$ in Appendix~\ref{data:W23}: it has 14 tetrahedra, valences $(12,11,15,38,8)$, and vertex-link genus 10. Its valence-8 adjacent minima are $(8,8,8,8,12,12,12,15)$ and all 8 opposite valences are 38. Set
\[
 G(C)=4A_C(8,38)+3A_C(12,38)+A_C(15,38).
\]
Here the certified bounds are
\begin{equation*}
 G(11/5)<6.282964<2\pi<6.287539<G(23/10).
\end{equation*}
Consequently $\mathcal W_{23}$ is geometric by \Cref{cor:combined-star}, using its symmetric bounds at $C=23/10$. The same symmetric test with the actual opposite data fails at $C=11/5$. These two fixed-parameter evaluations suffice for the comparison.
\end{example}

\begin{figure}[!htbp]
\centering
\includegraphics[width=.94\textwidth]{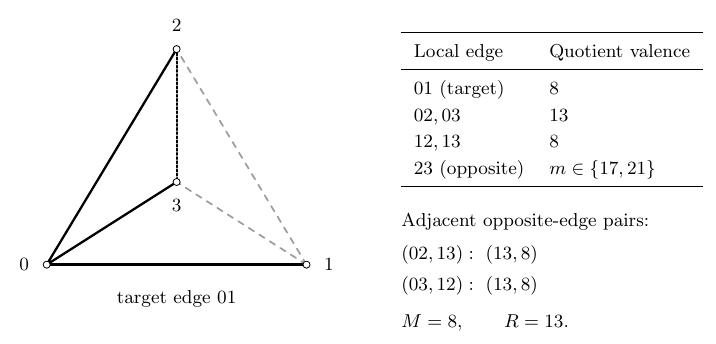}
\caption{A typical occurrence of $a_0$ in $\mathcal P$. The target is $01$. The adjacent opposite pairs are $(02,13)$ and $(03,12)$, each of valences $(13,8)$. The opposite edge $23$ has valence 17 or 21. Each pair provides one stronger upper bound, although the common adjacent minimum is 8.}
\label{fig:pair-witness}
\end{figure}

\begin{example}\label{ex:pair-witness}
The triangulation $\mathcal P$ in Appendix~\ref{data:P} has 16 tetrahedra, valences
\[
 (8,13,13,8,8,21,8,17),
\]
and vertex-link genus 9. Every one of its 32 valence-8 occurrences has pairwise threshold $R\ge13$ and opposite valence $m\le21$. These satisfy the applicable valence-8 rows of Table~\ref{tab:pair-criterion}, and there are no valence-9 edges. Hence $\mathcal P$ is geometric by \Cref{cor:C0-explicit}. For a common strict certificate,
\[
 \Theta_{11/5}(13,21)>0.785644,
 \qquad 8(0.785644)=6.285152>2\pi.
\]
We compare the computable tests on a particular valence-8 edge $a_0$. Its occurrences are $T_0^{01},\ldots,T_7^{01}$. In the order $(01,02,03,23,13,12)$ their valences are
\[
 (8,13,13,m,8,8),\qquad
 m=21\text{ at 6 occurrences and }m=17\text{ at 2}.
\]
Hence both adjacent opposite-edge pairs have valences $(13,8)$, although the minimum over all 4 adjacent edges is only 8; see \Cref{fig:pair-witness}.

The individual $C=2$ endpoint test \eqref{eq:C2-occurrence}, which uses more information than the symmetric tables, has sum
\[
 6L_{21}+2L_{17},\qquad
 L_m=\arccos\varphi(2,b_{13},b_{13},\gamma_m,2,2).
\]
Exact interval evaluation gives
\[
 6L_{21}+2L_{17}\in
 [6.241892741844,\,6.241892741852]\subset(-\infty,2\pi).
\]
Thus the hypothesis of \Cref{thm:C2-star} is not satisfied at this edge.

The symmetric test \eqref{eq:symmetric-star-test} fails for every $C\in(3/2,1+\sqrt2]$. Indeed, all these occurrences have $M=8$ and $m\ge13$, so the symmetric comparison in Appendix~\ref{app:mixed-comparison} gives $A_C(8,m)<\pi/4$ throughout that interval. Hence $6A_C(8,21)+2A_C(8,17)<2\pi$. Finally, $T_0$ uses 6 distinct quotient-edge classes and meets $a_0$, so the local bichromatic condition fails. Thus the pair-min criterion detects $\mathcal P$, whereas the individual $C=2$ criterion, all admissible symmetric star-sum tests, and the local bichromatic criterion do not.
\end{example}

\begin{example}\label{ex:mixed-pair-eight}
Starting with $\mathcal P$ in Appendix~\ref{data:P}, replace the face pairing $T_{11}:F_0\to T_{13}:F_3$ by the permutation \code{3201}, and its inverse by \code{2310}; keep all other face maps. The resulting triangulation $\Pmix$ has 16 tetrahedra and quotient-edge valences
\[
 (8,13,13,8,8,13,8,25)
\]
in first-occurrence label order. Its complete face and link data are given in Appendix~\ref{data:Pmix}; they define a connected orientable one-vertex ideal triangulation with vertex-link genus 9. At the four valence-8 edges, labelled $0,3,4,6$, every occurrence has $R\ge13$, and the numbers with $m\le14$ are respectively $4,5,8,6$. The third row of Table~\ref{tab:C0-pair-patterns} therefore proves geometricity by \Cref{cor:C0-explicit}.

At the edge represented by $T_0^{01},\ldots,T_7^{01}$, the angle-coordinate valence vectors are $(8,13,13,m,8,8)$, with four occurrences each for $m=13$ and $m=25$. In particular,
\begin{gather*}
 \Theta_{C_0}(13,25)<0.783942<\pi/4,\\
 4\Theta_{C_0}(13,13)+4\Theta_{C_0}(13,25)>6.315834>2\pi.
\end{gather*}
The weaker occurrences fail the occurrence-wise pair-min condition, but the mixed sum succeeds. The individual $C=2$ test, all symmetric star-sum tests with $C\in(3/2,1+\sqrt2]$, and the local bichromatic condition also fail. Even after varying $C$, an occurrence-wise test requiring $\max\{A_C,\Theta_C\}>\pi/4$ fails at the weak occurrences. The proofs of these comparisons are in Appendix~\ref{app:mixed-comparison}. These failures distinguish the concrete tests, not their parent invariant-box criterion, which the mixed sum verifies.
\end{example}

\section{Application to compact manifolds with boundary}\label{sec:manifold}

\begin{corollary}\label{cor:boundary-manifold}
Let $N$ be a connected compact $3$-manifold with nonempty boundary, all of whose components have genus at least 2, and let $\mathcal T_N$ be an ideal triangulation. If $(C(N),\mathcal T_N)$ satisfies any one of \Cref{thm:local-bichromatic,thm:C2-star,thm:parametric-box,cor:combined-star}, then $N$ carries a complete hyperbolic metric with totally geodesic boundary, unique up to isometry. The prescribed triangulation is geometrically realized by hyper-ideal tetrahedra, with the same face identifications.
\end{corollary}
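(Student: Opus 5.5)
The plan is to reduce the statement to results already established, in three steps. First, each of \Cref{thm:local-bichromatic,thm:C2-star,thm:parametric-box,cor:combined-star}, applied to the closed pseudo $3$-manifold $(C(N),\mathcal T_N)$, yields a zero-curvature hyper-ideal metric $l\in L(C(N),\mathcal T_N)$. Second, \Cref{prop:geometric-realization} turns that metric into a hyperbolic structure on $N$. Third, uniqueness comes from Mostow--Prasad rigidity for hyperbolic manifolds with totally geodesic boundary, applied via the double.

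\emph{Existence.} By the conclusion of whichever criterion holds, there is $l\in L(C(N),\mathcal T_N)$ with $K(l)=0$. Thus every local tetrahedron is a genuine truncated tetrahedron, and every quotient-edge angle sum is $2\pi$. Gluing these truncated tetrahedra by the prescribed isometries of hexagonal faces is possible because the global length vector forces corresponding right-angled hexagons to be isometric. \Cref{prop:geometric-realization} then gives a smooth hyperbolic metric on $N$ with totally geodesic boundary, complete on $N$ including its boundary. The truncation triangles assemble into $\partial N$, since the vertex links of $\mathcal T_N$ are the boundary surfaces $S_i$. This also proves the last sentence of the statement: the prescribed triangulation, with the same face identifications, is realized by genuine hyper-ideal tetrahedra.

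\emph{Uniqueness.} Take two such metrics on $N$. Double $N$ along its totally geodesic boundary. Each metric doubles to a smooth hyperbolic metric on the closed $3$-manifold $DN$: reflection across a totally geodesic surface glues smoothly. By Mostow--Prasad rigidity, the two doubled metrics differ by an isometry homotopic to any prescribed homotopy equivalence, in particular to the identity. The standard argument then shows that this isometry can be chosen to commute with the doubling involution. It therefore descends to an isometry between the two metrics on $N$. Alternatively, one can simply cite the known rigidity theorem for compact hyperbolic $3$-manifolds with totally geodesic boundary (compare \cite{Kojima,FrigerioPetronio}). The genus-at-least-$2$ hypothesis is consistent with this: a totally geodesic closed boundary surface in a hyperbolic manifold has negative Euler characteristic.

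\emph{Main obstacle.} The delicate point is the phrase ``unique up to isometry''. Rigidity of the zero-curvature metric, \cite[Theorem~1.2]{LuoYang}, gives uniqueness only among metrics realizing this particular triangulation. Uniqueness among all hyperbolic metrics with geodesic boundary on $N$ needs the global Mostow-type argument. I would therefore invoke the doubling argument explicitly and cite the standard rigidity result, rather than the combinatorial rigidity, for that clause. Connectedness of $N$ is used only to make $DN$ connected, so that Mostow rigidity applies directly.
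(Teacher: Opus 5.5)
Your proposal is correct and follows essentially the same route as the paper. The paper also obtains a genuine zero-curvature metric from the applicable criterion, converts it via \Cref{prop:geometric-realization} into a complete hyperbolic metric with totally geodesic boundary realizing $\mathcal T_N$, and gets global uniqueness from Mostow rigidity through doubling, simply citing \cite[Theorem~2.15]{FrigerioPetronio} and noting that the doubling argument also covers the nonorientable case, rather than sketching the equivariance step as you do.
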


\begin{proof}
The applicable criterion gives a genuine zero-curvature hyper-ideal metric. By \Cref{prop:geometric-realization}, the prescribed face identifications give a complete hyperbolic metric on $N$ with totally geodesic boundary, geometrically realizing $\mathcal T_N$; see also \cite[Theorem~2.13 and Remark~2.14]{FrigerioPetronio}. Uniqueness up to isometry is the standard consequence of Mostow rigidity for such manifolds; see \cite[Theorem~2.15]{FrigerioPetronio}, whose doubling argument also applies in the nonorientable case.
\end{proof}

\begin{proposition}\label{prop:covers}
Each finite triangulation $\mathcal E_6,\mathcal C,\mathcal R,\mathcal T_5,\mathcal T_{23},\mathcal W_{23},\mathcal P,\mathcal U_6,\mathcal U_7,\Pmix$ gives geometric lifted triangulations on connected cyclic covers of every positive degree. Within each family, the compact manifolds are pairwise nonhomeomorphic. The local valence and pairwise conditions used to certify the base example are preserved.
\end{proposition}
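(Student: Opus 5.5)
The approach is to build, for each base triangulation $\mathcal T$ in the list (with underlying compact manifold $N$ and one vertex, or more generally connected boundary), cyclic covers from a surjection $\rho:\pi_1(N)\to\Z/k\Z$. Concretely, I would choose an integer-valued $1$-cocycle on the dual graph of face pairings: assign to each face pairing an element of $\Z/k\Z$ whose sum around the dual cycles determines a homomorphism. Simplest is to pick one face pairing $P$ lying on a dual cycle that is nontrivial in $H_1$ (for instance, the cross-copy-type pairing, or any pairing whose dual edge is not in a spanning tree and whose associated loop has infinite order in $H_1(N;\Z)$, which exists since $H_1(N)$ has rank at least $g(\partial N)\ge1$ by half-lives-half-dies). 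Assign weight $1$ to $P$ and $0$ to the others. The $k$-fold lift $\mathcal T^{(k)}$ then consists of copies $\sigma^{(r)}$, $r\in\Z/k\Z$, glued as in $\mathcal T$, except that $P$ shifts $r\mapsto r+1$. For this to be a genuine covering of the pseudo-manifold, the cocycle must vanish around every edge link, i.e.\ on the meridian loops of the quotient edges. So I would instead define the weights as a homomorphism $\pi_1(N)\to\Z/k\Z$ pulled back to the dual graph, which automatically kills edge-link loops. The cover is connected exactly when the homomorphism is onto.

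The key observation for preservation of hypotheses is then the following. A quotient edge $e$ of valence $v$ whose link loop maps trivially lifts to $k$ edges $e^{(r)}$, each of valence $v$. Each occurrence star of $e^{(r)}$ is isomorphic, as a labelled tetrahedron with valences, to the corresponding occurrence of $e$, because all neighbouring edges also lift with the same valence. Hence every occurrence-wise datum is unchanged: adjacent minima $M_j$, pair thresholds $R_j$, opposite valences $m_j$, bichromaticity, and the full star patterns. Therefore the same corollary (\Cref{cor:C0-explicit}, \Cref{cor:combined-star}, \Cref{thm:C2-star}, or \Cref{cor:C2-v8}) with the same parameter $C$ applies verbatim, and the lift is geometric. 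Alternatively, one can pull back the zero-curvature metric directly, which gives geometricity at once. The lifted triangulation is orientable and ideal, and its vertex links are covers of the base link.

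For pairwise nonhomeomorphism, I would compute $\chi(N^{(k)})=k\chi(N)=k\,\chi(\partial N)/2$, which is strictly monotone in $k$ since $\chi(\partial N)<0$. Thus the total boundary Euler characteristic distinguishes the covers, and Euler characteristic is a homeomorphism invariant.

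The main obstacle is exhibiting a surjection $\pi_1(N)\to\Z/k\Z$ for every $k$ uniformly over the ten examples. I would get it from $\operatorname{rank}H_1(N;\Z)\ge g\ge1$, using the half-lives-half-dies lemma, composed with $\Z\to\Z/k\Z$. To be concrete and checkable from the appendix data, I would also give, for each example, an explicit cocycle on face pairings that vanishes on every edge cycle and has value $1$ on some dual loop.
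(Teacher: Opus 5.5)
Your proposal is correct and follows essentially the paper's route: a surjection $\pi_1(N)\to\Z\to\Z/k\Z$ from $b_1(N)\ge g>0$ (the paper gets this from $\chi(N)=1-g$ and $H_3(N;\Q)=0$, i.e.\ $b_1-b_2=g$, rather than half-lives-half-dies), edge neighbourhoods lifting homeomorphically so every valence and hence all occurrence data $(M_j,m_j,R_j)$ are preserved, and $\chi(N_k)=k(1-g)$ to separate the degrees. One correction: remove bichromaticity from your list of preserved data, since two local edges of one tetrahedron that represent the same base edge can lift to different edge classes of the cover; this does not affect the proposition, because none of the ten examples is certified by \Cref{thm:local-bichromatic}.
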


\begin{proof}
Write $N$ for the compact manifold of a base example and $S_g$ for its connected boundary. Here $g\ge2$. Doubling along the boundary and using Poincar\'e duality gives $\chi(N)=\tfrac12\chi(S_g)=1-g$. Write $b_i(N)=\dim_{\Q}H_i(N;\Q)$. Connectedness and nonempty boundary give $b_0(N)=1$ and $H_3(N;\Q)\cong H^0(N,\partial N;\Q)=0$. Hence $b_1(N)-b_2(N)=g>0$. The finite truncated triangulation gives $N$ a finite CW structure, so $H_1(N;\Z)$ is finitely generated, with positive free rank $b_1(N)$. Abelianization followed by projection onto a free summand gives an epimorphism $\pi_1(N)\twoheadrightarrow\Z$.

For $d\ge1$, compose this epimorphism with reduction modulo $d$ and let $K_d$ be its kernel. Then $K_d$ is normal of index $d$, so $N_d:=\widetilde N/K_d$ is a connected $d$-sheeted regular cover of $N$ with deck group $\Z/d\Z$. Lift the truncated triangulation to $N_d$. Each truncated ideal edge is an interval and has $d$ lifts. The absence of reversed edge identifications keeps its two endpoints distinct. Each lift has the same valence and local opposite-pair data as the base edge, so all numerical star-sum tests take the same values. In particular, the inequalities proving geometricity are preserved.

Finally, $\chi(N_d)=d(1-g)$ distinguishes different degrees. This argument does not require $\partial N_d$ to be connected.
\end{proof}

\appendix

\section{Rigorous evaluation of the numerical tables}\label{app:numerics}

The entries of Tables~\ref{tab:C0-occurrence}, \ref{tab:C0-patterns}, and~\ref{tab:pair-values}, and the strict comparisons in Tables~\ref{tab:C2-simple}, \ref{tab:C2-refined}, \ref{tab:C0-simple}, \ref{tab:C0-refined}, and~\ref{tab:pair-criterion}, are certified by rational interval arithmetic. The numerical inequalities in the examples are checked by the same method. We specify the finite verification procedure here.

For a fixed integer $n$, an interval enclosure for $c_n=\cos(2\pi/n)$ is first computed at high precision.  The rational operations and square roots in $\Gamma_n(C)$, $B_n(C)$, $\varphi$, $\Phi_{\mathrm{same}}$, and $\Phi_{\mathrm{cross}}$ are then evaluated with outward rounding.  If an interval $I=[a,b]\subset[-1,1]$ encloses a cosine expression, monotonicity of $\arccos$ gives the angle enclosure
\[
 [\arccos b,\arccos a].
\]
For each angle entry $d$ printed to 6 decimal places, the checks prove $d<\theta<d+10^{-6}$. More explicitly, $\pi$ is enclosed by Machin's identity
\[
 \pi=16\arctan(1/5)-4\arctan(1/239),
\]
using consecutive partial sums of the alternating arctangent series. For rational $x$ with $x^2<2$, consecutive partial sums of $\cos x=\sum_{k\ge0}(-1)^kx^{2k}/(2k)!$ give rational enclosures; the ratio of successive absolute terms is $x^2/((2k+1)(2k+2))<1$. If $q$ is the cosine argument, the two certified comparisons $q<\cos d$ and $q>\cos(d+10^{-6})$ prove the asserted rounding. Square roots are enclosed by rational numbers whose squared values bracket the argument. All these tests use exact integer or rational comparisons.

For the $C=2$ refined table, $M\mapsto b_M$ is nonincreasing and $m\mapsto\gamma_m$ is decreasing.  Consequently $A_2(M,m)$ is nondecreasing in $M$ and decreasing in $m$.  Each range in Table~\ref{tab:C2-refined} therefore reduces to its largest allowed $m$, or to the limiting value $m=\infty$.  The closest positive comparisons include
\[
\begin{gathered}
 A_2(18,15)>\frac\pi3,\qquad A_2(13,18)>\frac{2\pi}{7},\qquad A_2(10,7)>\frac\pi4,\\
 A_2(6,11)=A_2(8,11)>\frac{2\pi}{9}.
\end{gathered}
\]
The neighbouring failed comparisons determine the displayed break points.

For the non-mixed $C_0$ tables, clearing the positive denominator in \eqref{eq:symmetric-q} gives the equivalent rational-interval sign test
\[
 A_C(M,m)>\frac{2\pi}{n}
 \quad\Longleftrightarrow\quad
 (C-1)\bigl(\Gamma_m(C)+c_n\bigr)
       -2(1-c_n)B_M(C)^2>0.
\]
Both $B_M(C)$ and $\Gamma_m(C)$ decrease with their integer indices (with $B_M$ constant for $6\le M\le9$). Hence $A_C$ is nondecreasing in $M$ and decreasing in $m$. For a row $a\le m\le b$ with adjacent threshold $M$, we certify the displayed threshold at $(M,b)$ and, when $M>6$, its failure at $(M-1,a)$. This proves the entire range, including $b=\infty$, without searching an unbounded set of integers. The four uniform thresholds in Table~\ref{tab:C0-simple} are verified similarly with $m=\infty$.

For \Cref{ex:U6,ex:U7}, the full individual $C=2$ endpoint sums are evaluated at every occurrence using the valence vectors reconstructed from the face pairings, not by substituting rounded values of $b_n$ or $\gamma_m$. The resulting intervals are the ones displayed in \Cref{ex:U6,ex:U7}; the common $C_0$ lower bounds are $A_{C_0}(17,\infty)>1.049387$ and $A_{C_0}(13,\infty)>0.907613$.

All angle cells of Tables~\ref{tab:C0-occurrence} and~\ref{tab:pair-values} are checked directly from the defining formulas. For a pair-min row with opposite range $a\le m\le b$, it is enough to certify
\[
 \Phi_{\mathrm{pair}}(C_0,B_R(C_0),\Gamma_b(C_0))<c_n,
\]
with $\Gamma_\infty=1$ when $b=\infty$. Both $\Phi_{\mathrm{same}}$ and $\Phi_{\mathrm{cross}}$ are evaluated; neither is assumed in advance to be the larger. The valence-7 rows of Table~\ref{tab:pair-criterion} use the five inputs
\[
 (R,b)=(16,6),\ (18,7),\ (23,10),\ (30,21),\ (34,\infty).
\]
For example, the last input gives $\Theta_{C_0}(34,\infty)>0.899020>2\pi/7$. The valence-6 exclusion is instead proved analytically above, so it does not rely on a finite search over $R$.

For the valence-8 pair-min rows, the closest positive and negative comparisons at the transition are
\[
 \Phi_{\mathrm{pair}}(C_0,B_{13}(C_0),\Gamma_{21}(C_0))
 <\frac{\sqrt2}{2},
\]
whereas
\[
 \Phi_{\mathrm{pair}}(C_0,B_{13}(C_0),\Gamma_{22}(C_0))
 >\frac{\sqrt2}{2}.
\]
Thus the break between $m=21$ and $m=22$ in Table~\ref{tab:pair-criterion} is genuine for this estimate.  The mixed star sums in Tables~\ref{tab:C0-patterns} and~\ref{tab:C0-pair-patterns} are checked by summing the lower endpoints and comparing the result with an upper enclosure of $2\pi$.

Appendix~\ref{app:mixed-comparison} also uses the finite interval comparisons $3/25<2(1-c_{13})/(1+c_{13})<1/4$ and $\Gamma_{25}(3/2)<21/20$. The endpoint value is used only to bound $\Gamma_{25}(C)\le\Gamma_{25}(3/2)$ for $C>3/2$; \Cref{thm:parametric-box} is not applied at $C=3/2$.

After the stated monotonicity and endpoint reductions, these assertions require only interval evaluations at finitely many explicitly specified inputs. No continuous optimization is required for these finite checks. The continuous parameter comparisons in \Cref{ex:pair-witness} and Appendix~\ref{app:mixed-comparison}, in contrast, are proved analytically for the entire parameter interval.

\section{Face-pairing data and finite combinatorial certificates}\label{app:examples}

The records below completely specify the geometric triangulations established in \Cref{subsec:C2-examples,subsec:parametric-examples}; each data set refers to the example that proves its geometricity. The encoding is that of \Cref{subsec:encoding}. In a row for tetrahedron $t$, the 4 targets and the 4 permutations correspond in order to faces $F_0,F_1,F_2,F_3$. The local edge columns have the fixed order $01,02,03,12,13,23$. Edge-class labels are integers local to each example, numbered by first occurrence. The sign after a label records the orientation of the corresponding increasing local edge $i\to j$ relative to a chosen orientation of its quotient edge.

These records can be verified by the following finite procedure. For every face map $p:F_f^t\to F_{p(f)}^u$, check that the reverse record is $(t,p^{-1})$ in face column $p(f)$ of tetrahedron $u$. Form equivalence classes using only the 3 vertices and 3 edges contained in $F_f$, not its opposite vertex. The resulting vertex classes, edge classes and edge occurrence counts give the data below. The column $\varepsilon_t$ specifies the sign of the orientation on $T_t$. For every face map the records satisfy $\varepsilon_u=-\operatorname{sgn}(p)\varepsilon_t$, so all induced face orientations are reversed. The signed edge columns certify that every face map preserves the assigned quotient-edge orientations; therefore no edge is identified with itself in reverse.

The interval transverse to each local edge has one endpoint for each of its 2 incident faces. Pairing these endpoints produces a finite 2-regular graph; each quotient-edge class is connected by its definition, so this graph is a circle. To check the remaining vertex links directly, take one triangle for each local vertex $(t,v)$, with corners labelled by the other vertices $w\ne v$. A face map identifies $(t,v,w)$ with $(u,p(v),p(w))$ whenever $v,w\ne f$. For each resulting vertex, the corner-link intervals form a connected 2-regular graph: their endpoints are paired by the triangle-side maps, and their connectivity follows from the corner equivalence class. Thus the glued triangles form a closed surface. It is connected in every example. Orienting the triangle at $(t,v)$ by $(-1)^{v+1}\varepsilon_t$ relative to the increasing order of the other vertex labels gives opposite boundary orientations along paired sides, so the surface is orientable. If there are $T$ tetrahedra and $E$ quotient edges, its cell counts are $2E,6T,4T$, giving genus $1+T-E$. Hence truncation gives a compact orientable $3$-manifold with the stated boundary.

\begin{center}
\begin{tabular}{cccl}
\toprule
Example&$T$&Boundary genus&Quotient-edge valences (in label order)\\
\midrule
$\mathcal E_6$ & 5 & 4 & $(24, 6)$\\
$\mathcal C$ & 8 & 5 & $(11, 9, 20, 8)$\\
$\mathcal R$ & 7 & 5 & $(24, 8, 10)$\\
$\mathcal T_5$ & 14 & 11 & $(52, 10, 14, 8)$\\
$\mathcal T_{23}$ & 10 & 6 & $(13, 13, 8, 14, 12)$\\
$\mathcal W_{23}$ & 14 & 10 & $(12, 11, 15, 38, 8)$\\
$\mathcal P$ & 16 & 9 & $(8, 13, 13, 8, 8, 21, 8, 17)$\\
 $\mathcal U_6$ & 7 & 5 & $(6, 17, 19)$\\
 $\mathcal U_7$ & 11 & 7 & $(7, 13, 13, 20, 13)$\\
$\Pmix$ & 16 & 9 & $(8,13,13,8,8,13,8,25)$\\
\bottomrule
\end{tabular}
\end{center}

\Needspace{7\baselineskip}
\subsection{\texorpdfstring{Data for $\mathcal E_6$}{Data for E6}}\label{data:E6}
This is the triangulation used in \Cref{ex:min6}. Its edge valences, in label order, are $(24, 6)$.

\begingroup
\small
\setlength{\tabcolsep}{5pt}
\setlength{\LTpre}{6pt}
\setlength{\LTpost}{6pt}
\renewcommand{\LTcaptype}{} 
\stepcounter{LT@tables} 
\begin{longtable}{rclll}
\toprule
$t$ & $\varepsilon_t$ & Face targets & Face permutations & Signed local edge classes\\
\midrule
\endfirsthead
\multicolumn{5}{l}{\textit{Face-pairing data (continued)}}\\
\toprule
$t$ & $\varepsilon_t$ & Face targets & Face permutations & Signed local edge classes\\
\midrule
\endhead
\midrule
\multicolumn{5}{r}{\textit{Continued on next page}}\\
\endfoot
\bottomrule
\endlastfoot
0 & + & \code{3 1 4 1} & \code{3201 0123 1203 1032} & \code{0+ 0- 0- 0- 1+ 0-}\\
1 & - & \code{2 0 0 2} & \code{3120 0123 1032 2031} & \code{0- 0- 0- 0- 0- 0-}\\
2 & - & \code{3 1 4 1} & \code{1320 1302 0213 3120} & \code{0+ 0+ 0- 0- 1+ 0-}\\
3 & + & \code{4 2 4 0} & \code{3102 3021 1320 2310} & \code{0- 0+ 1- 1- 0+ 0+}\\
4 & - & \code{0 2 3 3} & \code{2013 0213 3021 2130} & \code{1+ 0+ 0- 0+ 0- 1+}\\
\end{longtable}
\endgroup

For each low-valence class $e$, the following records $(M,m,R)$ give the common adjacent minimum, opposite valence, and pairwise threshold, with multiplicities:
\begin{center}
\small
\begin{tabular}{cl}
\toprule
Class $e$ & Occurrence data\\
\midrule
1 & $4\times(24,6,24),\quad 2\times(24,24,24)$\\
\bottomrule
\end{tabular}
\end{center}

\Needspace{7\baselineskip}
\subsection{\texorpdfstring{Data for $\mathcal C$}{Data for C}}\label{data:C}
This is the triangulation used in \Cref{ex:C2-refined}. Its edge valences, in label order, are $(11, 9, 20, 8)$.

\begingroup
\small
\setlength{\tabcolsep}{5pt}
\setlength{\LTpre}{6pt}
\setlength{\LTpost}{6pt}
\renewcommand{\LTcaptype}{} 
\stepcounter{LT@tables} 
\begin{longtable}{rclll}
\toprule
$t$ & $\varepsilon_t$ & Face targets & Face permutations & Signed local edge classes\\
\midrule
\endfirsthead
\multicolumn{5}{l}{\textit{Face-pairing data (continued)}}\\
\toprule
$t$ & $\varepsilon_t$ & Face targets & Face permutations & Signed local edge classes\\
\midrule
\endhead
\midrule
\multicolumn{5}{r}{\textit{Continued on next page}}\\
\endfoot
\bottomrule
\endlastfoot
0 & + & \code{0 1 0 4} & \code{2310 2031 3201 2310} & \code{0+ 0- 1+ 1+ 1- 0-}\\
1 & + & \code{0 4 5 6} & \code{1302 0132 0132 2031} & \code{2+ 2- 3+ 1- 0+ 0-}\\
2 & + & \code{5 3 6 7} & \code{0132 0213 2031 2031} & \code{2+ 2- 1- 3+ 0+ 2+}\\
3 & + & \code{7 6 2 4} & \code{2031 0213 0213 1023} & \code{2- 0+ 1- 3+ 2+ 2+}\\
4 & + & \code{0 1 6 3} & \code{3201 0132 3201 1023} & \code{2+ 3+ 2- 0+ 1- 0+}\\
5 & + & \code{2 7 7 1} & \code{0132 1023 0132 0132} & \code{2+ 3+ 2- 0+ 3+ 2-}\\
6 & + & \code{4 1 3 2} & \code{2310 1302 0213 1302} & \code{0+ 2- 1- 1+ 2+ 2-}\\
7 & + & \code{5 2 3 5} & \code{1023 1302 1302 0132} & \code{2+ 2- 3+ 3+ 2- 2-}\\
\end{longtable}
\endgroup

For each low-valence class $e$, the following records $(M,m,R)$ give the common adjacent minimum, opposite valence, and pairwise threshold, with multiplicities:
\begin{center}
\small
\begin{tabular}{cl}
\toprule
Class $e$ & Occurrence data\\
\midrule
1 & $2\times(9,9,11),\quad 1\times(9,11,9),\quad 4\times(11,8,20)$\\
 & $2\times(11,9,20)$\\
3 & $2\times(11,8,20),\quad 4\times(11,9,20),\quad 2\times(20,8,20)$\\
\bottomrule
\end{tabular}
\end{center}

\Needspace{7\baselineskip}
\subsection{\texorpdfstring{Data for $\mathcal R$}{Data for R}}\label{data:R}
This is the triangulation used in \Cref{ex:C2-mixed}. Its edge valences, in label order, are $(24, 8, 10)$.

\begingroup
\small
\setlength{\tabcolsep}{5pt}
\setlength{\LTpre}{6pt}
\setlength{\LTpost}{6pt}
\renewcommand{\LTcaptype}{} 
\stepcounter{LT@tables} 
\begin{longtable}{rclll}
\toprule
$t$ & $\varepsilon_t$ & Face targets & Face permutations & Signed local edge classes\\
\midrule
\endfirsthead
\multicolumn{5}{l}{\textit{Face-pairing data (continued)}}\\
\toprule
$t$ & $\varepsilon_t$ & Face targets & Face permutations & Signed local edge classes\\
\midrule
\endhead
\midrule
\multicolumn{5}{r}{\textit{Continued on next page}}\\
\endfoot
\bottomrule
\endlastfoot
0 & + & \code{2 5 5 4} & \code{0321 1302 3120 1023} & \code{0+ 1+ 0+ 0- 1+ 0+}\\
1 & + & \code{6 2 6 2} & \code{2310 1302 2031 2031} & \code{2+ 0+ 0- 0- 2+ 2-}\\
2 & + & \code{0 1 3 1} & \code{0321 1302 2103 2031} & \code{0- 2- 0- 0- 1- 0+}\\
3 & + & \code{2 5 6 5} & \code{2103 2103 0213 2310} & \code{2- 0- 2- 0+ 1- 0-}\\
4 & + & \code{4 6 4 0} & \code{2031 3012 1302 1023} & \code{0- 0- 1+ 1+ 0- 0+}\\
5 & + & \code{3 3 0 0} & \code{3201 2103 3120 2031} & \code{1- 0+ 0- 0+ 0- 2-}\\
6 & + & \code{4 3 1 1} & \code{1230 0213 3201 1302} & \code{2+ 2- 2- 0+ 0+ 1-}\\
\end{longtable}
\endgroup

For each low-valence class $e$, the following records $(M,m,R)$ give the common adjacent minimum, opposite valence, and pairwise threshold, with multiplicities:
\begin{center}
\small
\begin{tabular}{cl}
\toprule
Class $e$ & Occurrence data\\
\midrule
1 & $1\times(10,10,24),\quad 1\times(10,24,24),\quad 4\times(24,8,24)$\\
 & $2\times(24,10,24)$\\
\bottomrule
\end{tabular}
\end{center}

\Needspace{7\baselineskip}
\subsection{\texorpdfstring{Data for $\mathcal T_5$}{Data for T5}}\label{data:T5}
This is the triangulation used in \Cref{ex:table5}. Its edge valences, in label order, are $(52, 10, 14, 8)$.

\begingroup
\small
\setlength{\tabcolsep}{5pt}
\setlength{\LTpre}{6pt}
\setlength{\LTpost}{6pt}
\renewcommand{\LTcaptype}{} 
\stepcounter{LT@tables} 
\begin{longtable}{rclll}
\toprule
$t$ & $\varepsilon_t$ & Face targets & Face permutations & Signed local edge classes\\
\midrule
\endfirsthead
\multicolumn{5}{l}{\textit{Face-pairing data (continued)}}\\
\toprule
$t$ & $\varepsilon_t$ & Face targets & Face permutations & Signed local edge classes\\
\midrule
\endhead
\midrule
\multicolumn{5}{r}{\textit{Continued on next page}}\\
\endfoot
\bottomrule
\endlastfoot
0 & + & \code{1 10 12 6} & \code{3201 2103 2310 3120} & \code{0+ 1+ 0+ 2+ 0+ 0+}\\
1 & + & \code{13 5 12 0} & \code{3012 3120 2103 2310} & \code{0+ 2- 0+ 0- 0- 3+}\\
2 & + & \code{9 11 6 4} & \code{2310 1302 2031 0213} & \code{0- 0+ 2- 2+ 0+ 0-}\\
3 & + & \code{12 7 4 5} & \code{2103 3201 2310 0213} & \code{3- 0+ 0+ 1+ 0- 0-}\\
4 & + & \code{8 3 12 2} & \code{1023 3201 2031 0213} & \code{0+ 0- 0+ 2- 1- 3-}\\
5 & + & \code{6 1 11 3} & \code{1230 3120 3012 0213} & \code{0+ 3- 0- 1- 0- 2+}\\
6 & + & \code{0 5 13 2} & \code{3120 3012 1023 1302} & \code{0+ 0+ 2- 2+ 0- 1-}\\
7 & + & \code{7 9 3 7} & \code{3012 0132 2310 1230} & \code{0- 0- 0- 0- 0- 0-}\\
8 & + & \code{9 4 11 10} & \code{0321 1023 3201 1302} & \code{0- 2- 1- 0- 2+ 3-}\\
9 & + & \code{8 7 2 10} & \code{0321 0132 3201 3120} & \code{0+ 0- 0- 3+ 2- 0+}\\
10 & + & \code{9 0 8 13} & \code{3120 2103 2031 3120} & \code{2+ 1- 0+ 3+ 0- 0+}\\
11 & + & \code{8 5 13 2} & \code{2310 1230 0213 2031} & \code{0- 0- 0- 2- 1+ 0+}\\
12 & + & \code{1 0 3 4} & \code{2103 3201 2103 1302} & \code{1- 0- 0- 0- 0- 0+}\\
13 & + & \code{10 11 6 1} & \code{3120 0213 1023 1230} & \code{0- 0- 0- 3+ 2- 1+}\\
\end{longtable}
\endgroup

For each low-valence class $e$, the following records $(M,m,R)$ give the common adjacent minimum, opposite valence, and pairwise threshold, with multiplicities:
\begin{center}
\small
\begin{tabular}{cl}
\toprule
Class $e$ & Occurrence data\\
\midrule
3 & $1\times(10,52,14),\quad 5\times(10,52,52),\quad 2\times(14,52,52)$\\
\bottomrule
\end{tabular}
\end{center}

\Needspace{7\baselineskip}
\subsection{\texorpdfstring{Data for $\mathcal T_{23}$}{Data for T23}}\label{data:T23}
This is the triangulation used in \Cref{ex:parameter-switch}. Its edge valences, in label order, are $(13, 13, 8, 14, 12)$.

\begingroup
\small
\setlength{\tabcolsep}{5pt}
\setlength{\LTpre}{6pt}
\setlength{\LTpost}{6pt}
\renewcommand{\LTcaptype}{} 
\stepcounter{LT@tables} 
\begin{longtable}{rclll}
\toprule
$t$ & $\varepsilon_t$ & Face targets & Face permutations & Signed local edge classes\\
\midrule
\endfirsthead
\multicolumn{5}{l}{\textit{Face-pairing data (continued)}}\\
\toprule
$t$ & $\varepsilon_t$ & Face targets & Face permutations & Signed local edge classes\\
\midrule
\endhead
\midrule
\multicolumn{5}{r}{\textit{Continued on next page}}\\
\endfoot
\bottomrule
\endlastfoot
0 & + & \code{4 5 3 4} & \code{3012 0213 3012 0132} & \code{0+ 1+ 2+ 0+ 3+ 0+}\\
1 & + & \code{9 6 9 5} & \code{1023 3012 2031 0321} & \code{2+ 4+ 3+ 2- 3- 3+}\\
2 & + & \code{6 4 3 6} & \code{3201 3012 1230 1302} & \code{4- 0- 1- 1+ 3- 0+}\\
3 & + & \code{7 0 8 2} & \code{1230 1230 1230 3012} & \code{1+ 3+ 0- 4- 1+ 2-}\\
4 & + & \code{2 6 0 0} & \code{1230 3120 0132 1230} & \code{0+ 3+ 1+ 0+ 0+ 1+}\\
5 & + & \code{7 1 0 8} & \code{2103 0321 0213 2031} & \code{1+ 4+ 2+ 4- 0+ 2+}\\
6 & + & \code{1 4 2 2} & \code{1230 3120 2031 2310} & \code{0+ 1- 1- 3+ 4- 3-}\\
7 & + & \code{8 3 5 8} & \code{2031 3012 2103 2310} & \code{4+ 1- 2+ 4- 0+ 4-}\\
8 & + & \code{7 5 7 3} & \code{3201 1302 1302 3012} & \code{0+ 1- 4- 1+ 4+ 4+}\\
9 & + & \code{9 1 9 1} & \code{2031 1023 1302 1302} & \code{3- 2- 3- 3- 3- 3+}\\
\end{longtable}
\endgroup

For each low-valence class $e$, the following records $(M,m,R)$ give the common adjacent minimum, opposite valence, and pairwise threshold, with multiplicities:
\begin{center}
\small
\begin{tabular}{cl}
\toprule
Class $e$ & Occurrence data\\
\midrule
2 & $1\times(8,12,13),\quad 1\times(8,13,12),\quad 2\times(8,14,14)$\\
 & $1\times(12,12,12),\quad 1\times(12,13,13),\quad 1\times(13,13,13)$\\
 & $1\times(14,14,14)$\\
\bottomrule
\end{tabular}
\end{center}

\Needspace{7\baselineskip}
\subsection{\texorpdfstring{Data for $\mathcal W_{23}$}{Data for W23}}\label{data:W23}
This is the triangulation used in \Cref{ex:parameter-switch}. Its edge valences, in label order, are $(12, 11, 15, 38, 8)$.

\begingroup
\small
\setlength{\tabcolsep}{5pt}
\setlength{\LTpre}{6pt}
\setlength{\LTpost}{6pt}
\renewcommand{\LTcaptype}{} 
\stepcounter{LT@tables} 
\begin{longtable}{rclll}
\toprule
$t$ & $\varepsilon_t$ & Face targets & Face permutations & Signed local edge classes\\
\midrule
\endfirsthead
\multicolumn{5}{l}{\textit{Face-pairing data (continued)}}\\
\toprule
$t$ & $\varepsilon_t$ & Face targets & Face permutations & Signed local edge classes\\
\midrule
\endhead
\midrule
\multicolumn{5}{r}{\textit{Continued on next page}}\\
\endfoot
\bottomrule
\endlastfoot
0 & + & \code{2 6 4 9} & \code{3201 0321 0132 0132} & \code{0+ 1+ 2+ 3+ 0+ 3+}\\
1 & + & \code{9 1 10 1} & \code{1230 1302 2031 2031} & \code{1+ 1- 3- 3- 1+ 1-}\\
2 & + & \code{11 12 5 0} & \code{1302 0213 0132 2310} & \code{3+ 3- 0- 0- 3- 4+}\\
3 & + & \code{5 8 8 13} & \code{2103 2031 3012 3120} & \code{3+ 4+ 4- 3- 3+ 2+}\\
4 & + & \code{7 7 11 0} & \code{0213 1230 0321 0132} & \code{0+ 2+ 3+ 0+ 3+ 2+}\\
5 & + & \code{6 10 3 2} & \code{1302 3201 2103 0132} & \code{3+ 0- 2+ 3- 3+ 1+}\\
6 & + & \code{12 5 8 0} & \code{3201 2031 3120 0321} & \code{2+ 1+ 3+ 3- 3+ 3-}\\
7 & + & \code{4 10 4 12} & \code{0213 3012 3012 2031} & \code{3- 2+ 2- 0- 2+ 3+}\\
8 & + & \code{3 3 6 11} & \code{1302 1230 3120 1023} & \code{3- 3+ 3- 4+ 2- 4+}\\
9 & + & \code{13 1 0 10} & \code{3201 3012 0132 0321} & \code{0+ 1- 1+ 2- 3+ 3-}\\
10 & + & \code{7 9 5 1} & \code{1230 0321 2310 1302} & \code{1+ 1- 0+ 3+ 2- 2+}\\
11 & + & \code{13 2 4 8} & \code{2103 2031 0321 1023} & \code{3+ 4+ 0+ 3+ 3- 3+}\\
12 & + & \code{13 7 2 6} & \code{1302 1302 0213 2310} & \code{3- 3+ 0- 3- 4+ 2+}\\
13 & + & \code{3 12 11 9} & \code{3120 2031 2103 2310} & \code{3- 2+ 3+ 3- 3- 4-}\\
\end{longtable}
\endgroup

For each low-valence class $e$, the following records $(M,m,R)$ give the common adjacent minimum, opposite valence, and pairwise threshold, with multiplicities:
\begin{center}
\small
\begin{tabular}{cl}
\toprule
Class $e$ & Occurrence data\\
\midrule
4 & $4\times(8,38,38),\quad 1\times(12,38,12),\quad 2\times(12,38,38)$\\
 & $1\times(15,38,38)$\\
\bottomrule
\end{tabular}
\end{center}

\Needspace{7\baselineskip}
\subsection{\texorpdfstring{Data for $\mathcal P$}{Data for P}}\label{data:P}
This is the triangulation used in \Cref{ex:pair-witness}. Its edge valences, in label order, are $(8, 13, 13, 8, 8, 21, 8, 17)$.

\begingroup
\small
\setlength{\tabcolsep}{5pt}
\setlength{\LTpre}{6pt}
\setlength{\LTpost}{6pt}
\renewcommand{\LTcaptype}{} 
\stepcounter{LT@tables} 
\begin{longtable}{rclll}
\toprule
$t$ & $\varepsilon_t$ & Face targets & Face permutations & Signed local edge classes\\
\midrule
\endfirsthead
\multicolumn{5}{l}{\textit{Face-pairing data (continued)}}\\
\toprule
$t$ & $\varepsilon_t$ & Face targets & Face permutations & Signed local edge classes\\
\midrule
\endhead
\midrule
\multicolumn{5}{r}{\textit{Continued on next page}}\\
\endfoot
\bottomrule
\endlastfoot
0 & + & \code{2 5 1 7} & \code{0132 0132 0132 0132} & \code{0+ 1+ 2+ 3+ 4+ 5+}\\
1 & + & \code{5 11 2 0} & \code{0132 0213 0132 0132} & \code{0+ 2+ 2- 4+ 4+ 5+}\\
2 & + & \code{0 10 3 1} & \code{0132 0213 0132 0132} & \code{0+ 2- 1+ 4+ 3+ 5-}\\
3 & + & \code{15 7 4 2} & \code{3201 0132 0132 0132} & \code{0+ 1+ 1- 3+ 6+ 7+}\\
4 & + & \code{6 9 5 3} & \code{0132 3120 0132 0132} & \code{0+ 1- 2+ 6+ 4+ 5-}\\
5 & + & \code{1 0 6 4} & \code{0132 0132 0132 0132} & \code{0+ 2+ 1+ 4+ 4+ 5-}\\
6 & + & \code{4 10 7 5} & \code{0132 3120 0132 0132} & \code{0+ 1+ 1- 4+ 6+ 5+}\\
7 & + & \code{12 3 0 6} & \code{1230 0132 0132 0132} & \code{0+ 1- 1+ 6+ 3+ 7-}\\
8 & + & \code{9 12 14 15} & \code{3120 0213 0213 2031} & \code{3+ 7- 7+ 5- 7+ 5-}\\
9 & + & \code{10 4 11 8} & \code{0132 3120 2310 3120} & \code{7- 5+ 2- 5- 2+ 1+}\\
10 & + & \code{9 6 2 11} & \code{0132 3120 0213 1023} & \code{2- 5- 1+ 2+ 5- 1-}\\
11 & + & \code{13 9 1 10} & \code{3120 3201 0213 1023} & \code{2+ 2+ 2- 5- 5+ 7-}\\
12 & + & \code{13 7 8 14} & \code{2031 3012 0213 2103} & \code{7- 3- 7+ 5- 5- 6+}\\
13 & + & \code{14 15 12 11} & \code{2103 0213 1302 3120} & \code{5- 7+ 5- 5- 6- 7+}\\
14 & + & \code{15 8 13 12} & \code{0132 0213 2103 2103} & \code{5+ 3+ 7+ 7+ 6- 7+}\\
15 & + & \code{14 8 13 3} & \code{0132 1302 0213 2310} & \code{7+ 3- 5- 6- 7+ 7-}\\
\end{longtable}
\endgroup

For each low-valence class $e$, the following records $(M,m,R)$ give the common adjacent minimum, opposite valence, and pairwise threshold, with multiplicities:
\begin{center}
\small
\begin{tabular}{cl}
\toprule
Class $e$ & Occurrence data\\
\midrule
0 & $2\times(8,17,13),\quad 6\times(8,21,13)$\\
3 & $4\times(8,13,13),\quad 1\times(8,17,17),\quad 1\times(8,21,17)$\\
 & $1\times(17,8,17),\quad 1\times(17,21,17)$\\
4 & $8\times(8,13,13)$\\
6 & $4\times(8,13,13),\quad 1\times(8,17,21),\quad 1\times(8,21,17)$\\
 & $1\times(17,8,17),\quad 1\times(17,17,21)$\\
\bottomrule
\end{tabular}
\end{center}

\Needspace{7\baselineskip}
\subsection{\texorpdfstring{Data for $\mathcal U_6$}{Data for U6}}\label{data:U6}
This is the geometric triangulation of \Cref{ex:U6}. Its quotient-edge valences, in label order, are $(6, 17, 19)$, and its connected vertex link has genus 5. All tetrahedra have orientation sign $+1$.

\begingroup
\small
\setlength{\tabcolsep}{5pt}
\setlength{\LTpre}{6pt}
\setlength{\LTpost}{6pt}
\renewcommand{\LTcaptype}{}
\stepcounter{LT@tables}
\begin{longtable}{rclll}
\toprule
$t$ & $\varepsilon_t$ & Face targets & Face permutations & Signed local edge classes\\
\midrule
\endfirsthead
\multicolumn{5}{l}{\textit{Face-pairing data (continued)}}\\
\toprule
$t$ & $\varepsilon_t$ & Face targets & Face permutations & Signed local edge classes\\
\midrule
\endhead
\midrule
\multicolumn{5}{r}{\textit{Continued on next page}}\\
\endfoot
\bottomrule
\endlastfoot
0 & + & \code{5 4 1 5} & \code{1230 3012 0132 0132} & \code{0+ 1+ 2+ 1- 1- 1-}\\
1 & + & \code{5 3 2 0} & \code{0132 2031 0132 0132} & \code{0+ 2+ 2- 1- 1- 1+}\\
2 & + & \code{4 6 3 1} & \code{1302 1230 0132 0132} & \code{0+ 2- 2+ 1- 2+ 2-}\\
3 & + & \code{1 6 4 2} & \code{1302 2103 0132 0132} & \code{0+ 2+ 2- 2+ 1- 2+}\\
4 & + & \code{0 2 5 3} & \code{1230 2031 0132 0132} & \code{0+ 2- 1+ 1- 1- 2-}\\
5 & + & \code{1 0 0 4} & \code{0132 3012 0132 0132} & \code{0+ 1+ 1+ 1- 1- 1-}\\
6 & + & \code{6 3 2 6} & \code{3201 2103 3012 2310} & \code{2- 2- 2+ 2+ 2- 2-}\\
\end{longtable}
\endgroup

The low-valence occurrence records $(M,m,R)$, with multiplicities, are
\begin{center}
\small
\begin{tabular}{cl}
\toprule
Class $e$ & Occurrence data\\
\midrule
0 & $2\times(17,17,17),\quad 1\times(17,17,19)$\\
 & $1\times(17,19,17),\quad 2\times(17,19,19)$\\
\bottomrule
\end{tabular}
\end{center}

The low-edge occurrences are $T_0^{01},\ldots,T_{5}^{01}$. The face maps $T_i:F_2\to T_{i+1}:F_3$ with permutation \code{0132} (indices modulo 6) connect them in one cycle and preserve $0\to1$. All remaining faces omit either 0 or 1 and therefore cannot identify a new local edge with this class. The signs in the last column certify the orientations of the remaining quotient edges.

\Needspace{7\baselineskip}
\subsection{\texorpdfstring{Data for $\mathcal U_7$}{Data for U7}}\label{data:U7}
This is the geometric triangulation of \Cref{ex:U7}. Its quotient-edge valences, in label order, are $(7, 13, 13, 20, 13)$, and its connected vertex link has genus 7. All tetrahedra have orientation sign $+1$.

\begingroup
\small
\setlength{\tabcolsep}{5pt}
\setlength{\LTpre}{6pt}
\setlength{\LTpost}{6pt}
\renewcommand{\LTcaptype}{}
\stepcounter{LT@tables}
\begin{longtable}{rclll}
\toprule
$t$ & $\varepsilon_t$ & Face targets & Face permutations & Signed local edge classes\\
\midrule
\endfirsthead
\multicolumn{5}{l}{\textit{Face-pairing data (continued)}}\\
\toprule
$t$ & $\varepsilon_t$ & Face targets & Face permutations & Signed local edge classes\\
\midrule
\endhead
\midrule
\multicolumn{5}{r}{\textit{Continued on next page}}\\
\endfoot
\bottomrule
\endlastfoot
0 & + & \code{4 9 1 6} & \code{0132 3201 0132 0132} & \code{0+ 1+ 1- 2+ 2+ 3+}\\
1 & + & \code{10 9 2 0} & \code{3201 0132 0132 0132} & \code{0+ 1- 4+ 2+ 4+ 3+}\\
2 & + & \code{4 3 3 1} & \code{1023 1023 0132 0132} & \code{0+ 4+ 2+ 4+ 4+ 3-}\\
3 & + & \code{2 5 4 2} & \code{1023 0132 0132 0132} & \code{0+ 2+ 4+ 4+ 2+ 3-}\\
4 & + & \code{0 2 5 3} & \code{0132 1023 0132 0132} & \code{0+ 4+ 4+ 2+ 2+ 3-}\\
5 & + & \code{6 3 6 4} & \code{1023 0132 0132 0132} & \code{0+ 4+ 2+ 2+ 1+ 3+}\\
6 & + & \code{10 5 0 5} & \code{1230 1023 0132 0132} & \code{0+ 2+ 1+ 1+ 2+ 3+}\\
7 & + & \code{8 7 8 7} & \code{2103 2310 0213 3201} & \code{3- 3- 3+ 3+ 1+ 3+}\\
8 & + & \code{10 7 7 9} & \code{0213 0213 2103 1023} & \code{3- 3- 3+ 4+ 1+ 1+}\\
9 & + & \code{10 1 0 8} & \code{2103 0132 2310 1023} & \code{3+ 4+ 1- 3- 1+ 3-}\\
10 & + & \code{8 6 9 1} & \code{0213 3012 2103 2310} & \code{3+ 2- 3- 4- 1+ 1+}\\
\end{longtable}
\endgroup

The low-valence occurrence records $(M,m,R)$, with multiplicities, are
\begin{center}
\small
\begin{tabular}{cl}
\toprule
Class $e$ & Occurrence data\\
\midrule
0 & $7\times(13,20,13)$\\
\bottomrule
\end{tabular}
\end{center}

The low-edge occurrences are $T_0^{01},\ldots,T_{6}^{01}$. The face maps $T_i:F_2\to T_{i+1}:F_3$ with permutation \code{0132} (indices modulo 7) connect them in one cycle and preserve $0\to1$. All remaining faces omit either 0 or 1 and therefore cannot identify a new local edge with this class. The signs in the last column certify the orientations of the remaining quotient edges.

\Needspace{7\baselineskip}
\subsection{\texorpdfstring{Data for $\Pmix$}{Data for Pmix}}\label{data:Pmix}
This is the geometric triangulation of \Cref{ex:mixed-pair-eight}. Its quotient-edge valences, in label order, are $(8,13,13,8,8,13,8,25)$, and its connected vertex link has genus 9. All tetrahedra have orientation sign $+1$.

\begingroup
\small
\setlength{\tabcolsep}{5pt}
\setlength{\LTpre}{6pt}
\setlength{\LTpost}{6pt}
\renewcommand{\LTcaptype}{}
\stepcounter{LT@tables}
\begin{longtable}{rclll}
\toprule
$t$ & $\varepsilon_t$ & Face targets & Face permutations & Signed local edge classes\\
\midrule
\endfirsthead
\multicolumn{5}{l}{\textit{Face-pairing data (continued)}}\\
\toprule
$t$ & $\varepsilon_t$ & Face targets & Face permutations & Signed local edge classes\\
\midrule
\endhead
\midrule
\multicolumn{5}{r}{\textit{Continued on next page}}\\
\endfoot
\bottomrule
\endlastfoot
0 & + & \code{2 5 1 7} & \code{0132 0132 0132 0132} & \code{0+ 1+ 2+ 3+ 4+ 5+}\\
1 & + & \code{5 11 2 0} & \code{0132 0213 0132 0132} & \code{0+ 2+ 2- 4+ 4+ 5+}\\
2 & + & \code{0 10 3 1} & \code{0132 0213 0132 0132} & \code{0+ 2- 1+ 4+ 3+ 5-}\\
3 & + & \code{15 7 4 2} & \code{3201 0132 0132 0132} & \code{0+ 1+ 1- 3+ 6+ 7+}\\
4 & + & \code{6 9 5 3} & \code{0132 3120 0132 0132} & \code{0+ 1- 2+ 6+ 4+ 7-}\\
5 & + & \code{1 0 6 4} & \code{0132 0132 0132 0132} & \code{0+ 2+ 1+ 4+ 4+ 5-}\\
6 & + & \code{4 10 7 5} & \code{0132 3120 0132 0132} & \code{0+ 1+ 1- 4+ 6+ 7+}\\
7 & + & \code{12 3 0 6} & \code{1230 0132 0132 0132} & \code{0+ 1- 1+ 6+ 3+ 7-}\\
8 & + & \code{9 12 14 15} & \code{3120 0213 0213 2031} & \code{3+ 7- 7+ 5- 7+ 7-}\\
9 & + & \code{10 4 11 8} & \code{0132 3120 2310 3120} & \code{7- 7+ 2- 5- 2+ 1+}\\
10 & + & \code{9 6 2 11} & \code{0132 3120 0213 1023} & \code{2- 7- 1+ 2+ 5- 1-}\\
11 & + & \code{13 9 1 10} & \code{3201 3201 0213 1023} & \code{2+ 2+ 2- 7- 5+ 7-}\\
12 & + & \code{13 7 8 14} & \code{2031 3012 0213 2103} & \code{7- 3- 7+ 5- 7- 6+}\\
13 & + & \code{14 15 12 11} & \code{2103 0213 1302 2310} & \code{7- 7+ 5- 5- 6- 7+}\\
14 & + & \code{15 8 13 12} & \code{0132 0213 2103 2103} & \code{5+ 3+ 7+ 7+ 6- 7+}\\
15 & + & \code{14 8 13 3} & \code{0132 1302 0213 2310} & \code{7+ 3- 5- 6- 7+ 7-}\\
\end{longtable}
\endgroup

The low-valence occurrence records $(M,m,R)$, with multiplicities, are
\begin{center}
\small
\begin{tabular}{cl}
\toprule
Class $e$ & Occurrence data\\
\midrule
0 & $4\times(8,13,13),\quad4\times(8,25,13)$\\
3 & $4\times(8,13,13),\quad(8,25,13),\quad(8,25,25)$\\
  & $(13,8,25),\quad(13,25,25)$\\
4 & $8\times(8,13,13)$\\
6 & $4\times(8,13,13),\quad(8,13,25),\quad(8,25,25)$\\
  & $(13,8,25),\quad(13,25,13)$\\
\bottomrule
\end{tabular}
\end{center}
The signed edge data and reciprocal face maps satisfy the checks described at the start of this appendix. There are 8 circular edge links. The vertex link has 16 vertices, 96 edges, and 64 triangles, and the link of each of its vertices is a circle; it is therefore a closed orientable surface of Euler characteristic $-16$ and genus 9. The distinguished edge $0$ has occurrences $T_0^{01},\ldots,T_7^{01}$, with $m=13$ for $t=0,1,2,5$ and $m=25$ for $t=3,4,6,7$.

\section{Comparison of the tests for the mixed pair-min example}\label{app:mixed-comparison}
We justify the comparisons in \Cref{ex:mixed-pair-eight}. At the distinguished valence-8 edge of $\Pmix$, each occurrence has $(M,R)=(8,13)$, with four opposite valences 13 and four opposite valences 25.

For the fixed-parameter comparisons, the interval method of Appendix~\ref{app:numerics} gives
\begin{align*}
 \Theta_{C_0}(13,13)&\in[0.795016978000739048,\,0.795016978000739049],\\
 \Theta_{C_0}(13,25)&\in[0.783941662863045541,\,0.783941662863045542],\\
 4\Theta_{C_0}(13,13)+4\Theta_{C_0}(13,25)
 &\in[6.315834563455138356,\,6.315834563455138364].
\end{align*}
The second angle is less than $\pi/4$, but the sum is greater than $2\pi$. In the individual $C=2$ test, put
\[
 L_m=\arccos\varphi(2,b_{13},b_{13},\gamma_m,2,2).
\]
The sum at this edge is
\[
 4L_{13}+4L_{25}\in[6.267948814608128040,\,6.267948814608128048]
 \subset(-\infty,2\pi),
\]
so \Cref{thm:C2-star} does not apply.

For the symmetric tests over the full parameter range, take $m\ge13$ and $C>3/2$. Then $c_m\ge c_{13}>37/42$ and hence
\[
 \Gamma_m(C)=1+\frac{2(1-c_m)}{C+c_m}
 <1+\frac{2(1-37/42)}{3/2+37/42}=\frac{11}{10}.
\]
The bound on $c_{13}$ follows, for example, from $\cos t\ge1-t^2/2$ and $\pi<22/7$:
$c_{13}>7313/8281>37/42$. Also
\[
 \frac{C-1}{2C^2+C-1}\le\frac19,
 \qquad 2C^2+C-1-9(C-1)=2(C-2)^2.
\]
Thus \eqref{eq:symmetric-q}, with $p=C$, gives
\[
 \cos A_C(8,m)>1-\frac{1+11/10}{9}
 =\frac{23}{30}>\frac{\sqrt2}{2}.
\]
Every symmetric lower bound at this edge is below $\pi/4$, so no symmetric star sum succeeds for $C\in(3/2,1+\sqrt2]$. Moreover, $T_0$ contains this edge and has six distinct quotient-edge classes. Hence the local bichromatic hypothesis also fails.

We now show that the occurrence-wise pair-min test fails for the weaker data $(M,R,m)=(8,13,25)$ throughout the parameter range. Set
\[
 \lambda=\frac{2(1-c_{13})}{1+c_{13}}>0,
 \qquad u=C^2,\qquad \delta=\Gamma_{25}(C).
\]
Then $q=B_{13}(C)=1+\lambda C^2$, and direct simplification yields
\[
 \Phi_{\mathrm{same}}(C,q,\delta)
 =1-\frac{1+\delta}{\lambda+2}\frac{u-1}{u(2+\lambda u)}.
\]
First suppose $3/2<C\le2$. Put $h(u)=(u-1)/(u(2+\lambda u))$. The interval method of Appendix~\ref{app:numerics} certifies $3/25<\lambda<1/4$ and $\delta\le\Gamma_{25}(3/2)<21/20$. Since $h'(u)$ has the sign of $2+2\lambda u-\lambda u^2\ge2-8\lambda>0$ on $1<u\le4$, one has $h(u)\le h(4)$, and therefore
\[
 \Phi_{\mathrm{pair}}(C,q,\delta)
 \ge1-\frac{3(1+\delta)}{4(\lambda+2)(2+4\lambda)}
 >\frac{37201}{52576}>\frac{\sqrt2}{2},
\]
where the last comparison follows by squaring positive quantities.

Now suppose $2\le C\le1+\sqrt2$ and put $\gamma=\Gamma_{25}(2)$, so that $\delta\le\gamma$. The arithmetic--geometric mean inequality gives
\begin{align*}
 \frac{u(2+\lambda u)}{u-1}
 &=\lambda(u-1)+2\lambda+2+\frac{\lambda+2}{u-1}\\
 &\ge2\lambda+2+2\sqrt{\lambda(\lambda+2)}.
\end{align*}
Therefore
\[
 \Phi_{\mathrm{pair}}(C,B_{13}(C),\Gamma_{25}(C))\ge\kappa,
 \quad
 \kappa=1-\frac{1+\gamma}
 {(\lambda+2)\{2\lambda+2+2\sqrt{\lambda(\lambda+2)}\}}.
\]
Exact interval evaluation certifies
\[
 \kappa\in[0.707617762149055785,\,0.707617762149055786]
 \subset(\sqrt2/2,1).
\]
Both parameter ranges thus give $\Theta_C(13,25)<\pi/4$. Together with the symmetric comparison above, this proves
\[
 \max\{A_C(8,25),\Theta_C(13,25)\}<\pi/4
 \qquad(3/2<C\le1+\sqrt2).
\]
Thus summation across occurrences is essential for the stated symmetric/pair-min tests. These comparisons do not exclude stronger angle estimates or a direct verification of the full static-box condition.

\bibliographystyle{amsalpha}
\bibliography{references}

@article{BaoBonahon,
  author  = {Xiliang Bao and Francis Bonahon},
  title   = {Hyperideal polyhedra in hyperbolic 3-space},
  journal = {Bull. Soc. Math. France},
  volume  = {130},
  number  = {3},
  pages   = {457--491},
  year    = {2002},
  doi     = {10.24033/bsmf.2426},
  note = {\url{https://doi.org/10.24033/bsmf.2426}}
}

@article{CFMP,
  author  = {Francesco Costantino and Roberto Frigerio and Bruno Martelli and Carlo Petronio},
  title   = {Triangulations of 3-manifolds, hyperbolic relative handlebodies, and {D}ehn filling},
  journal = {Comment. Math. Helv.},
  volume  = {82},
  number  = {4},
  pages   = {903--933},
  year    = {2007},
  doi     = {10.4171/CMH/114},
  note = {\url{https://doi.org/10.4171/CMH/114}}
}

@article{FengGeHua,
  author  = {Ke Feng and Huabin Ge and Bobo Hua},
  title   = {Combinatorial {R}icci flows and the hyperbolization of a class of compact 3-manifolds},
  journal = {Geom. Topol.},
  volume  = {26},
  number  = {3},
  pages   = {1349--1384},
  year    = {2022},
  doi     = {10.2140/gt.2022.26.1349},
  eprint  = {2009.03731v2},
  eprinttype = {arxiv},
  note = {\url{https://doi.org/10.2140/gt.2022.26.1349}; \href{https://arxiv.org/abs/2009.03731v2}{arXiv:2009.03731v2}}
}

@article{HamPurcell,
  author  = {Sophie L. Ham and Jessica S. Purcell},
  title   = {Geometric triangulations and highly twisted links},
  journal = {Algebr. Geom. Topol.},
  volume  = {23},
  number  = {3},
  pages   = {1399--1462},
  year    = {2023},
  doi     = {10.2140/agt.2023.23.1399},
  note = {\url{https://doi.org/10.2140/agt.2023.23.1399}}
}

@article{HodgsonRubinstein,
  author  = {Craig D. Hodgson and J. Hyam Rubinstein and Henry Segerman},
  title   = {Triangulations of hyperbolic 3-manifolds admitting strict angle structures},
  journal = {J. Topol.},
  volume  = {5},
  number  = {4},
  pages   = {887--908},
  year    = {2012},
  doi     = {10.1112/jtopol/jts022},
  note = {\url{https://doi.org/10.1112/jtopol/jts022}}
}

@incollection{Kojima,
  author    = {Sadayoshi Kojima},
  title     = {Polyhedral decomposition of hyperbolic 3-manifolds with totally geodesic boundary},
  booktitle = {Aspects of Low Dimensional Manifolds},
  series    = {Adv. Stud. Pure Math.},
  volume    = {20},
  pages     = {93--112},
  publisher = {Kinokuniya},
  address   = {Tokyo},
  year      = {1992},
  doi       = {10.2969/aspm/02010093},
  note = {\url{https://doi.org/10.2969/aspm/02010093}}
}

@article{Luo2005,
  author  = {Feng Luo},
  title   = {A combinatorial curvature flow for compact 3-manifolds with boundary},
  journal = {Electron. Res. Announc. Amer. Math. Soc.},
  volume  = {11},
  pages   = {12--20},
  year    = {2005},
  doi     = {10.1090/S1079-6762-05-00142-3},
  note = {\url{https://doi.org/10.1090/S1079-6762-05-00142-3}}
}

@article{LuoYang,
  author  = {Feng Luo and Tian Yang},
  title   = {Volume and rigidity of hyperbolic polyhedral 3-manifolds},
  journal = {J. Topol.},
  volume  = {11},
  number  = {1},
  pages   = {1--29},
  year    = {2018},
  doi     = {10.1112/topo.12046},
  note = {\url{https://doi.org/10.1112/topo.12046}}
}

@misc{ThurstonNotes,
  author       = {William P. Thurston},
  title        = {The Geometry and Topology of Three-Manifolds},
  year         = {1980},
  howpublished = {Princeton lecture notes},
  note         = {\url{https://library.slmath.org/books/gt3m/}}
}

@misc{Zhao,
  author       = {Xinrong Zhao},
  title        = {Combinatorial {R}icci flows and hyperbolic structures on a class of compact 3-manifolds with boundary},
  year         = {2026},
  howpublished = {arXiv:2601.15174v3},
  note         = {\url{https://arxiv.org/abs/2601.15174v3}}
}

@article{PaoluzziZimmermann,
  author  = {Luisa Paoluzzi and Bruno Zimmermann},
  title   = {On a class of hyperbolic 3-manifolds and groups with one defining relation},
  journal = {Geom. Dedicata},
  volume  = {60},
  pages   = {113--123},
  year    = {1996},
  doi     = {10.1007/BF00160617},
  note = {\url{https://doi.org/10.1007/BF00160617}}
}

@article{FominykhVesnin,
  author        = {A. Yu. Vesnin and E. A. Fominykh},
  title         = {Exact values of complexity for {P}aoluzzi--{Z}immermann manifolds},
  journal       = {Dokl. Math.},
  volume        = {84},
  number        = {1},
  pages         = {542--544},
  year          = {2011},
  eprint        = {1105.2542},
  archivePrefix = {arXiv},
  primaryClass  = {math.GT},
  doi           = {10.1134/S1064562411050139},
  note = {\url{https://doi.org/10.1134/S1064562411050139}; \href{https://arxiv.org/abs/1105.2542}{arXiv:1105.2542}}
}

@article{FrigerioPetronio,
  author = {Roberto Frigerio and Carlo Petronio},
  title = {Construction and recognition of hyperbolic 3-manifolds with geodesic boundary},
  journal = {Trans. Amer. Math. Soc.},
  volume = {356}, number = {8}, pages = {3243--3282}, year = {2004},
  doi = {10.1090/S0002-9947-03-03378-6},
  note = {\url{https://doi.org/10.1090/S0002-9947-03-03378-6}}
}

\end{document}